\def\var{\mbox{Var}}
\newenvironment{enumerate*}%
  {\begin{enumerate}%
    \setlength{\itemsep}{0pt}%
    \setlength{\parskip}{0pt}}%
  {\end{enumerate}}
\newtheorem{thrm}{Theorem}[section]
\newtheorem{prte}[thrm]{Proposition}
\newtheorem{lemma}[thrm]{Lemma}
\newtheorem{cor}[thrm]{Corollary}
\newtheorem{defi}[thrm]{Definition}
\newtheorem{remark}{Remark}[section]
\newtheorem{algo}{Algorithm}[section]
\theoremstyle{plain}
\begin{document}

\begin{frontmatter}
\title{Adaptive estimation of covariance matrices via Cholesky decomposition}
\runtitle{Covariance estimation} 

\begin{aug}
\author{\fnms{Nicolas}
  \snm{Verzelen}\thanksref{t1}\ead[label=e1]{nicolas.verzelen@supagro.inra.fr}}
 
\thankstext{t1}{Research mostly carried out at Univ Paris-Sud (Laboratoire de
Mat\'ematiques, CNRS-UMR 8628)}
%\thankstext{t1}{Some comment} 
%\thankstext{t2}{First supporter of the project} 
%\thankstext{t3}{Second supporter of the project} 
\runauthor{N. Verzelen} 
 
\affiliation{INRA and SUPAGRO} 
 
\address{
 INRA, UMR  729 MISTEA,\\ 
F-34060 Montpellier, France
} 

\address{
 SUPAGRO, UMR  729 MISTEA,\\ 
F-34060 Montpellier, France\\
\printead{e1}
} 
\end{aug}

\begin{abstract} 
This paper studies the estimation of a large covariance matrix. We introduce a
novel procedure called ChoSelect based on the Cholesky factor of the inverse
covariance. This method uses a dimension reduction strategy by selecting  the
pattern of zero of the Cholesky factor. Alternatively, 
ChoSelect can be interpreted as a graph estimation procedure for directed
Gaussian graphical models. 
Our approach is particularly relevant when the variables under study have a
natural ordering (e.g. time series) or more generally when the Cholesky factor
is approximately sparse.
ChoSelect achieves  non-asymptotic oracle inequalities with respect to the
Kullback-Leibler entropy. Moreover, it satisfies various adaptive properties
from a minimax point of view. We also introduce and study a two-stage procedure
that combines ChoSelect with the Lasso. This last method enables the
practitioner to choose his own trade-off between statistical efficiency and
computational complexity. Moreover, it is consistent under
weaker assumptions than the Lasso.
 The practical performances of the different procedures
are assessed on numerical examples.
\end{abstract}
\begin{keyword}[class=AMS] 
\kwd[Primary ]{62H12} 
\kwd[; secondary ]{62F35, 62J05} 
\end{keyword} 
 
\begin{keyword} 
\kwd{Covariance matrix}
\kwd{banding}
\kwd{Cholesky decomposition}
\kwd{directed graphical models} 
\kwd{penalized criterion}
\kwd{minimax rate of estimation}
\end{keyword}

%\received{\smonth{6} \syear{2009}}
\end{frontmatter}

\maketitle

\section{Introduction}\label{section_introduction}

The problem of estimating large covariance matrices has recently attracted a lot
of attention. On the one hand, there is an inflation of high-dimensional data in
many scientific areas: gene arrays, functional magnetic resonance imaging
(fMRI), image classification, and climate studies. On the other hand, many data
analysis tools require an 
estimation of the covariance matrix $\Sigma$. This is for instance the case  for
principal component analysis (PCA), for linear discriminant analysis (LDA), or
for establishing independences or conditional independences between the
variables. It is known for a long time that the simplest estimator, the sample
covariance matrix performs poorly when the size of the vector $p$ is larger than
the number of observations $n$ (see for instance
Johnstone~\cite{johnstone2001}).

Depending on the objectives of the analysis and on the applications, different
approaches are used for estimating high-dimensional covariance matrices. Indeed,
if one wants to 
perform PCA or to establish independences between the covariates, then it is
advised to estimate directly the covariance matrix $\Sigma$. In contrast,
performing LDA  further relies on the inverse of the covariance matrix. In the
sequel, we call this matrix the precision matrix and note it $\Omega$. Sparse
precision matrices are also of interest because of their connection with
graphical models and conditional independence. The pattern of zero in $\Omega$
indeed corresponds to the graph structure of the distribution (see for instance
Lauritzen~\cite{lauritzen96} Sect.5.1.3). 

Most of the methods based on direct covariance matrix estimation amount to
regularize the empirical covariance matrix.  Let us mention the work of 
Ledoit and Wolf~\cite{ledoit} who propose to replace the sample covariance with
its linear combination with the identity matrix. However, these shrinkage
methods are known to provide an inconsistent estimation of the
eigenvectors~\cite{johnstonelu}. Applying recent results on random matrix
theory, El Karoui~\cite{karoui07} and Bickel and Levina~\cite{bickel08b} have
studied thresholding estimators of $\Sigma$. The resulting estimator is sparse
and is proved (for instance~\cite{bickel08b}) to be consistent with respect to
the operator norm under mild conditions as long as $\log(p)/n$ goes to $0$.
These results are particularly of interest for performing PCA since they imply a
consistent estimation of the eigenvalues and the eigenvectors. Observe that all
these methods are invariant under permutation of the variables. Yet, in many
applications (for instance times series, spectroscopy, climate data), there
exists a natural ordering in the data. In such a case, one should use other
procedures to obtain faster rates of convergence.
Among other,  Furrer and Bentgsson~\cite{furrer} and Bickel and
Levina~\cite{bickel08} use banded or tapering estimators. Again, the consistency
of such estimators is proved.  Moreover, all these methods share an
attractive computational cost. We refer to the introduction of~\cite{bickel08b}
for a more complete review.

 The estimation procedures of the precision matrix $\Omega$ fall into three
categories depending whether there exists an ordering on the variables and to
what extent this ordering is important. If there is not such an ordering,
d'Aspremont et al.~\cite{aspremont}  and Yuan and Lin~\cite{yuan07} have
adopted a penalized likelihood approach by applying a $l_1$ penalty to the
entries  of the precision matrix. It has also been discussed by Rothman et
al.~\cite{rothman07} and Friedman et al.~\cite{friedman08} and extended
by Lam and Fan et al.~\cite{fan_covariance} or Fan et
al.~\cite{Fan08} to other penalization methods. These estimators are known to
converge with respect to the Frobenius norm (for instance~\cite{rothman07}) when
the underlying precision matrix is sparse enough. 

When there is a natural ordering on the covariates, the regularization is
introduced via the Cholesky decomposition:
$$ \Omega=T^* S^{-1}T\ ,$$
 where $T$ is a lower triangular matrix with a unit diagonal and $S$ is a
diagonal matrix with positive entries. The elements of the $i$-th row can be
interpreted as regression coefficient of $i$-th component given its
predecessors. This will be further explained in Section \ref{section_modele}.
For time series or spectroscopy data, it is more likely that the relevant
covariates for this regression of the $i$-th component are its closest
predecessors. In other word, it is expected that the matrix $T$ is approximately
banded.
 With this in mind,  Wu and Pourahmadi~\cite{wu03} introduce a $k$-banded
estimator of the matrix $T$ by smoothing along the first $k$ subdiagonals and
setting the rest to 0. The choice of $k$ is made by applying AIC
(Akaike~\cite{akaike73}). They prove element-wise consistency of their estimator
but did not provide any high-dimensional result with respect to a loss function
such as Kullback or Frobenius. Bickel and Levina~\cite{bickel08} also consider
$k$-banded estimator of $T$ and are able to prove rates of convergence in the
matrix  operator norm. Moreover, they introduce a cross-validation approach for
choosing a suitable $k$, but they do not prove that the selection method
achieves adaptiveness. More recently, Levina et al.~\cite{levina08}
propose a new banding procedure based on a nested Lasso penalty. Unlike the
previous methods, they allow the number $k=k_i$ used for banding to depend on
the line $i$ of $T$. They do not state any theoretical result, but they exhibit
numerical evidence of its efficiency. In the sequel, we call the issue of
estimating $\Omega$ by banding the matrix $T$ the \emph{banding problem}.

Between the first approach based on precision matrix regularization and the
second one which relies on banding the Cholesky factor, there exists a third one
which is not permutation invariant, but does not assume that the matrix $T$ is
approximately banded. It  consists in approximating $T$ by a sparse lower
triangular matrix (i.e. most of the entries are set to $0$).

When is it interesting to adopt this approach? If we consider
a directed graphical model whose graph is sparse and  compatible with the
ordering of the variables, then the Cholesky factor $T$ is sparse. Indeed, its
pattern of zero is related to the directed acyclic graph (DAG) of the directed
graphical model associated to this ordering (see Section \ref{section_modele}
for a definition).  More generally, it may be worth using this strategy even if
one does not know a ``good'' ordering on the variables. 
 On the one hand, most of the procedures based on the estimation of $T$ are
computationally faster than their counterpart based on the estimation of
$\Omega$. This is due to the
decomposition of the likelihood into $p$ independent terms explained in Section
\ref{section_description}. On the other hand, there exist examples
of sparse Cholesky factor $T$ such that the precision matrix $\Omega$ is not
sparse at all. Consider for instance a matrix $T$ which is zero except on the
diagonal and on the last line. Admittedly, it
is not completely satisfying to apply a method that depends on the ordering of
the variables when we do not know a \emph{good} ordering. There are indeed
examples of sparse precision matrices $\Omega$ such that for a \emph{bad}
ordering, the Cholesky factor is not sparse at all (see \cite{rothman07}
Sect.4).  Nevertheless, if sparse precision matrices and sparse Cholesky factors
have different approximation capacities, it remains still unclear which one
should be favored.

In the sequel, we call the issue of estimating $T$ in the class of sparse lower
triangular matrices the \emph{complete graph selection}  problem by analogy to
the complete variable estimation problem in regression problems. In this
setting, Huang et al.~\cite{huang06} propose to  add an $l_1$ penalty on
the elements of $T$. More recently, Lam and Fan~\cite{fan_covariance} have
extended the method to other types of penalty and have proved its consistency in
the Frobenius norm if the matrix $T$ is exactly sparse. To finish, let us
mention that Wagaman and Levina~\cite{wagaman08} have developed a data-driven
method based on the isomap algorithm for picking a ``good'' ordering on the
variables. \\

In this paper, we consider both the banding problem and the complete graph
selection problem.
We introduce a general $l_0$ penalization method based on maximum likelihood for
estimating the matrices $T$ and $S$. 
We exhibit a  non-asymptotic oracle inequality with respect to the Kullback loss
\emph{without} any assumption on the target $\Omega$. 

For the adaptive banding issue, our method is shown to achieve the optimal rate
of convergence and is adaptive to the rate of decay of the entries of $T$ when
one moves away from the diagonal. Corresponding minimax lower bounds are also
provided. We
also compute asymptotic rates of convergence in the Frobenius norm. Contrary to
the $l_1$ penalization methods, we explicitly provide  the constant for tuning
the penalty. Finally, the method is  computationally efficient.

For complete graph selection, we prove that our estimator
non-asymptotically achieves the optimal rates of convergence  when $T$ is
sparse. We also provide the corresponding minimax lower bounds. To our
knowledge, this minimax lower bounds with respect to the Kullback discrepansy
are also new.  Moreover, our method is flexible and allows to integrate some
prior knowledge on the graph. However, this procedure is computationally
intensive which makes it infeasible for $p$ larger than $30$. This is why we
introduce in Section \ref{section_fast_algorith} a computationally faster
version of the estimator by applying a two-stage procedure. This method inherits
some of the good properties
of the previous method and applies for arbitrarily large $p$. Moreover, it is
shown to select consistently the pattern of zeros under weaker assumptions than
the Lasso. These theoretical
results are corroborated by a simulation study. \\

Since data analysis methods like LDA are based on likelihood we find more
relevant to obtain rates of convergence with respect to the Kullback-Leibler
loss than Frobenius rates of convergence. Moreover, considering Kullback loss
allows us to obtain rates of convergence which are free of hidden dependency on
parameter such as the largest eigenvalue of $\Sigma$. In this sense, we argue
that this loss function is more natural for the statistical problem under
consideration. \\

The paper is organized as follows. Section \ref{section_preliminaire} gathers
some preliminaries about the Cholesky decomposition and introduces the main
notations. 
In Section \ref{section_description}, we describe the procedure and provide an
algorithm for computing the estimator $\widetilde{\Omega}$.  In Section
\ref{section_theorique},  we state the main result of the paper, namely a
general non-asymptotic oracle type inequality for the risk of
$\widetilde{\Omega}$. In Section \ref{section_ordered}, we specify our result to
the problem of adaptive banding. Moreover, we prove that our so-defined
estimator is minimax adaptive to the decay of the off-diagonal coefficients of
the matrix $T$. Asymptotic rates of convergence with respect to the Frobenius
norm are also provided. In Section \ref{section_complete}, we investigate the
complete graph selection issue. We first derive a non-asymptotic oracle
inequality and then derive that our procedure is minimax adaptive to the unknown
sparsity of the Cholesky factor $T$. As previously, we provide asymptotic rates
of convergence with respect to the Frobenius loss function. Moreover, we
introduce a computationally feasible estimation procedure in Section
\ref{section_fast_algorith} and we derive an oracle-type inequality and
sufficient condition for consistent selection of the graph. In Section
\ref{section_simulation}, the performances of the procedure are assessed on
numerical examples for both the banding and the complete graph selection
problem. We make a few concluding remarks in Section \ref{section_discussion}.
Sketch of the proof are in Section \ref{section_proofs}, while the details are
postponed to the technical Appendix~\cite{technical}.

\section{Preliminaries}\label{section_preliminaire}

\subsection{Link with conditional regression and graphical
models}\label{section_modele}

In this subsection, we review basic properties about Cholesky factors and
explain their connection with directed graphical models. \\

We consider the estimation of the vector $X=(X_i)_{1\leq i\leq  p}$ of size $p$
which follows a centered normal distribution with covariance matrix $\Sigma$. We
always assume that $\Sigma$ is non-singular. We recall that the precision matrix
$\Omega$ uniquely decomposes as $\Omega=T^*ST$ where $T$ is a lower triangular
matrix with unit diagonal and $S$ is a diagonal matrix.
Let us first emphasize the connection between the modified Cholesky factor $T$
and conditional regressions. 
For any $i$ between $2$ and $p$ we note $t_{i}$ the vector of size $i-1$ made of
the $i-1$-th first elements of the $i$th-line of $T$. By convention $t_1$ is the
vector of null size. Besides, we note $s_i$ the $i$-th diagonal element of the
matrix $S$. 
Let us define the vector $\epsilon=(\epsilon_i)_{1\leq i\leq p}$ of size $p$ as
$\epsilon:=T X$. By standard Gaussian properties, the covariance matrix of
$\epsilon$ is $S$. Since the diagonal of $T$ is one, it follows that for any
$1\leq i\leq p$
\begin{eqnarray}\label{modele_regression}
 X[i] = \sum_{j=1}^{i-1}-t_{i}[j]X[j]+\epsilon_i \ ,
\end{eqnarray}
where $\var(\epsilon_i)=s_i$ and the $(\epsilon_i)_{1\leq i\leq p}$ are
independent. \\

Let $\overrightarrow{G}$ be a directed acyclic graph who vertex set is
$\{1,\ldots,p\}$. We assume that the direction of the edges is compatible with
the natural ordering of $\{1,\ldots,p\}$. In other words, we assume that any
edge $j\rightarrow i$ in $\overrightarrow{G}$ satisfies $j<i$. Given a vertex
$i$, the set of its parents is defined by:
$$pa_{\overrightarrow{G}}(i):=\left\{j<i\, ,\, j\rightarrow i \right\}\ .$$
Then, the vector $X$ is said to be a {\bf directed Gaussian graphical model}
with respect to $\overrightarrow{G}$ if for any $1\leq j<i \leq p$ such that
$j\notin pa_{\overrightarrow{G}}(i)$, $X_i$ is independent of $X_j$
conditionally to $(X_k)_{k\in pa_{\overrightarrow{G}}(i)}$. This means that only
the variables $(X_k)_{k\in pa_{\overrightarrow{G}}(i)}$ are relevant for
predicting $X_i$ among the variables $(X_k)_{k<i}$. There are several
definitions of directed Gaussian graphical model (see Lauritzen
\cite{lauritzen96}), which are all equivalent when $\Sigma$ is non-singular. 

There exists a correspondence between the graph $\overrightarrow{G}$  and the
Cholesky factor $T$ of the precision matrix $\Omega$. If $X$ is a directed
graphical model with respect to $\overrightarrow{G}$, then $T[i,j]=0$ for any
$j<i$ such that $j\nrightarrow i$. Conversely, $X$ is a directed graphical model
with respect to the graph $\overrightarrow{G}$ defined by $j\rightarrow i$ if
and only $T[i,j]\neq 0$. Hence, it is equivalent to estimate the pattern of zero
of $T$ and the minimal graph $\overrightarrow{G}$ compatible with the ordering. 

These definitions and properties depend on a particular ordering of the
variables. It is
beyond the scope of this paper to discuss the graph estimation when the ordering
is not fixed. We refer the interested reader to Kalisch and B\"uhlmann
\cite{buhlmann07}.

\subsection{Notations}

For any set $A$, $|A|$ stands for its cardinality. We are given $n$ independent
observations of the  random vector $X$. We always assume that $X$ follows a
centered Gaussian distribution $\mathcal{N}(0_p,\Sigma)$. In the
sequel, we
note ${\bf X}$ the $n\times p$ matrix of the observations. Moreover, for any
$1\leq i\leq p$ and any subset $A$ of $\{1,\ldots,p-1\}$, ${\bf X}_i$ and ${\bf
X}_A$ respectively refer to the vector of the $n$ observations of $X_i$ and to
the $n\times |A|$ matrix of the observations of $(X_i)_{i\in A}$.

In the sequel, $\mathcal{K}(\Omega;\Omega')$ stands for the Kullback divergence
between the centered normal distribution with covariance $\Omega^{-1}$ and the
centered normal distribution with covariance $\Omega'^{-1}$.  We shall also
sometimes assess the performance of the procedures using the Frobenius norm and
the $l_2$ operator norm. This is why we respectively define
$\|A\|^2_F:=\sum_{i,j}A[i,j]^2$ and $\|A\|$ as the Frobenius norm and the $l_2$
operator norm of the matrix $A$. For any matrix $\Omega$,
$\varphi_{\text{\text{max}}}(\Omega)$ stands for the largest eigenvalue of
$\Omega$. Finally, $L$, $L_1$, $L_2$,$\ldots$ denote universal constants that
can vary from line to line. The notation $L_{.}$ specifies the dependency on
some
quantities.

\section{Description of the procedure}\label{section_description}

In this section, we introduce our procedure for estimating $\Omega$ given a
$n$-sample of the vector $X$.
For any $i$ between $1$ and $p$, $m_i$ stands for a subset of $\{1,\ldots,
i-1\}$. By convention, $m_1=\emptyset$. In terms of directed graphs, $m_i$
stands for the set of parents of $i$. Besides, we call any set $m$ of the form
$m=m_1\times m_2\times \ldots \times m_p$ a model. This model $m$ is one to one
with a directed graph whose ordering is compatible with the natural ordering of
$\{1,\ldots,p\}$. We shall sometimes call $m$ a graph in order to emphasize the
connection with graphical models.

Given a model  $m$, we define $\mathcal{T}_m$ as the affine space of lower
triangular matrices $T$ with unit diagonal such for any $i$ between
$1$ and $p$, the support (i.e. the non-zero coefficients) of $t_i$ is included
in $m_i$. We note $Diag(p)$ the
set of all diagonal matrices with positive entries on the diagonal. The matrices
$\widehat{T}_m$ and $\widehat{S}_m$ are then defined as the maximum likelihood
estimators of $T$ and $S$
\begin{eqnarray}\label{definition_TS}
 \left(\widehat{T}_m,\widehat{S}_m\right)= \arg\min_{T'\in \mathcal{T}_m,\ S'\in
\text{Diag}(p)} \mathcal{L}_n(T,S):=\frac{1}{2}tr\left[T^*S^{-1}T\overline{{\bf
X}^* {\bf X}}\right]+\frac{1}{2}\log|S|
\end{eqnarray}
Here, $\mathcal{L}_n(T,S)$ stands for the negative $\log$-likelihood.
Hence, the estimated precision matrix is $\widehat{\Omega}_m=
\widehat{T}_m^*\widehat{S}_m^{-1}\widehat{T}_m$. This matrix
$\widehat{\Omega}_m$ is the maximum likelihood estimator of $\Omega$ among the
precision matrices which correspond to directed graphical models with respect to
the graph $m$.

For any $i$ between $1$ and $p$, $\mathcal{M}_i$ refers to a collection of
subsets of $\{1,\ldots,i-1\}$ and we call $\mathcal{M}:=\mathcal{M}_1\times
\ldots \times \mathcal{M}_p$ a collection of models (or graphs). The choice of
the collection $\mathcal{M}$ depends on the estimation problem under
consideration. For instance, we shall use a collection corresponding to banded
matrices when we will consider the banding problems. The collections
$\mathcal{M}$ are specified for the banding problem and the complete graph
selection problem in Sections \ref{section_ordered} and \ref{section_complete}.

Our objective is to select a model $\widehat{m}\in\mathcal{M}$ such that the
Kullback-Leibler risk $\mathbb{E}[\mathcal{K}(\Omega;\widehat{\Omega}_m)]$ is as
small as possible. We achieve it through penalization. For any $1\leq i\leq p$,
$pen_i:\mathcal{M}_i\rightarrow \mathbb{R}^+$ is a positive function that we
shall explicitly define later. The penalty function $pen:\mathcal{M}\rightarrow
\mathbb{R}^+$ is defined as $pen(m)=\sum_{i=1}^p pen_i(m_i)$. Then, we select a
model $\widehat{m}$ that minimizes the following criterion
\begin{eqnarray*}
 \widehat{m}:=\arg\min_{m\in\mathcal{M}}2\mathcal{L}_n(\widehat{T}_m,\widehat{S}
_m)+pen(m)= \arg\min_{m\in\mathcal{M}}tr\left[\widehat{\Omega}_m\overline{{\bf
X}^* {\bf X}}\right]-\log|\widehat{\Omega}_m| +pen(m)
\end{eqnarray*}
For short, we write $\widetilde{\Omega}:=\widehat{\Omega}_{\widehat{m}}$,
$\widetilde{S}:=\widehat{S}_{\widehat{m}}$, and
$\widetilde{T}=\widehat{T}_{\widehat{m}}$.\\

As mentioned earlier, the idea underlying the use of the matrices $T$ and $S$
lies in the regression models (\ref{modele_regression}). Indeed, these
regressions naturally appear when deriving the negative log-likelihood
(\ref{definition_TS}):
\begin{eqnarray*}
2\mathcal{L}_n(T',S') =\sum_{i=1}^p s'^{-1}_i\|{\bf X}_i+ {\bf
X}_{<i}(t_i')^*\|_n^2+\log(s'_i)\ ,
\end{eqnarray*}
where $\|.\|_n$ stands for the Euclidean norm in $\mathbb{R}^n$ divided by
$\sqrt{n}$.
By definition of  $\widehat{T}_m$ and $\widehat{S}_m$, we easily derive that the
$i$-th row vector $\widehat{t}_{i,m_i}$ of $\widehat{T}_m$ and the $i$-th
diagonal element $\widehat{s}_{i,m_i}$ of $\widehat{S}_m$ respectively equal
 \begin{eqnarray}\label{expression_t_s}
 \widehat{t}_{i,m_i}=\arg\min_{\text{supp}(t'_i)\subset m_i}\|{\bf X}_i+ {\bf
X}_{<i}(t_i')^*\|^2_n\hspace{0.5cm}\text{and}\hspace{0.5cm}\widehat{s}^2_{i,m_i}
=\|{\bf X}_i+{\bf X}_{<i}\widehat{t}_{i,m_i}^*\|_n^2\  ,
\end{eqnarray}
for any $1\leq i\leq p$. Here,  $\text{supp}(t'_i)$ stands for the support of
$t'_i$.
Hence, the row vector $\widehat{t}_{i,m_i}$ is the least-squares estimator of
$t_i$ in the regression model (\ref{modele_regression}) and 
$\widehat{s}_{i,m_i}$ is the empirical conditional variance of $X_i$ given
$X_{m_i}$. There are two main consequences: first, Expression
(\ref{expression_t_s}) emphasizes the connection between covariance estimation
and linear regression in a Gaussian design. Second, it highly simplifies the
computational cost of our procedure.  Indeed, the negative log-likelihood
$\mathcal{L}_n(\widehat{T}_m,\widehat{S}_m)$ now writes
\begin{eqnarray*}
 \mathcal{L}_n\left(\widehat{T}_m,\widehat{S}_m\right)= \frac{1}{2}\sum_{i=1}^p
\left[\log\left(\widehat{s}_{i,m_i}\right)+1\right] \ .
\end{eqnarray*}
and it follows that $\widehat{m}_i=
\arg\min_{m_i\in\mathcal{M}_i}\log\left(\widehat{s}_{i,m_i}\right)+pen_i(m_i)$.
This is why we suggest to compute $\widehat{m}$ and $\widehat{\Omega}$ as
follows. Assume we are given a collection of graphs
$\mathcal{M}=(\mathcal{M}_1,\ldots,\mathcal{M}_p)$ and penalty functions
$(pen_1(.),\ldots,pen_p(.))$. 

\bigskip

\fbox{
\begin{minipage}{0.95\textwidth}
\begin{algo}
Computation of $\widehat{m}$ and $\widetilde{\Omega}$.
\begin{enumerate}
 \item For $i$ going from $1$ to $p$,
\begin{itemize}
 \item Compute $\widehat{s}_{i,m_i}$ for each model $m_i\in\mathcal{M}_i$.
\item Take
$\widehat{m}_i=\arg\min_{m_i\in\mathcal{M}_i}\log\left(\widehat{s}_{i,m_i}
\right)+pen_i(m_i)$.
\end{itemize}
\item Set $\widehat{m}= (\widehat{m}_1,\ldots,\widehat{m}_p)$ and built
$(\widetilde{T},\widetilde{S})$ by gathering the estimators
$(\widehat{t}_{i,\widehat{m}_i},\widehat{s}_{i,\widehat{m}_i})$.
\item Take $\widetilde{\Omega}=\widetilde{T}\widetilde{S}^{-1}\widetilde{T}$.
\end{enumerate}
\end{algo}
\end{minipage}

}

\bigskip

In what follows, we refer to this method as {\bf ChoSelect}. In order to select
$\widehat{m}$, one needs to compute all $\widehat{s}_{i,m_i}$ for any
$i\in\{1,\ldots,p\}$ and any model $m_i\in\mathcal{M}_i$. Hence, the complexity
of the procedure is proportional to $\sum_{i=1}^p\left|\mathcal{M}_i\right|$. We
further discuss computational issues and we provide a faster procedure in
Section \ref{section_fast_algorith}.

\section{Risk analysis}\label{section_theorique}

In this section, we first provide a bias-variance decomposition for the Kullback
risk of the parametric estimator $\widehat{\Omega}_m$. Afterwards, we state a
general non-asymptotic risk bound for $\widetilde{\Omega}$.

\subsection{Parametric estimation}\label{section_decomposition}

Let $m$ be model in $\mathcal{M}$. Let us define the matrix $\Omega_m$ as the
best approximation of $\Omega$ that corresponds to the model $m$. The matrices
$T_m$ and $S_m$ are defined as the minimizers in $\mathcal{T}_m$ and $Diag(p)$
of the Kullback loss with $\Omega$	
\begin{eqnarray*}
 \left(T_m,S_m\right):= \arg\min_{T'\in \mathcal{T}_m,\ S'\in \text{Diag}(p)}
\mathcal{K}\left(\Omega;T'^*S'^{-1}T'\right)
\end{eqnarray*}
We note $\Omega_m=T_m^{*}S_m^{-1}T_m$.

We define the conditional Kullback-Leibler divergence of the distribution of
$X_i$ given $X_{<i}$ by
\begin{eqnarray}\label{definition_kullback_conditionnel}
\mathcal{K}\left(t_i,s_i;t'_i,s_i'\right):=\mathbb{E}\left\{\mathcal{K}\left[
\mathbb{P}_{t_i,s_i}(X_i|X_{<i});\mathbb{P}_{t'_i,s_i'}(X_i|X_{<i})\right]
\right\} \ ,
\end{eqnarray}
where $\mathbb{P}_{t_i,s_i}(X_i|X_{<i})$ stands for the conditional distribution
of $X_i$ given $X_{<i}$ with parameters $(t_i,s_i)$.
Applying the chain rule, we  obtain that
$\mathcal{K}(\Omega;\Omega')=\sum_{i=1}^p\mathcal{K}\left(t_i,s_i;t'_i,
s_i'\right)$. Consequently, we analyze the Kullback risk
$\mathbb{E}[\mathcal{K}(\Omega;\widehat{\Omega}_m)]$ by controlling each 
conditional risk
$\mathbb{E}\left[\mathcal{K}(t_i,s_i;\widehat{t}_{i,m_i},\widehat{s}_{i,m_i}
)\right]$. Let us define $t_{i,m_i}$ and $s_{i,m_i}$ as the \emph{projections}
of $(t_i,s_i)$ on the space associated to the model $m_i$ with respect to the
Kullback divergence $\mathcal{K}(t_i,s_i;.,.)$. In other words, $t_{i,m_i}$ and
$s_{i,m_i}$ satisfy
\begin{eqnarray*}
 t_{i,m_i}=\arg\min_{\text{supp}(t'_i)\subset
m_i}\mathbb{E}\left[\left(X_i+X_{<i}(t_i')^*\right)^2\right]\hspace{0.5cm}\text{
and}\hspace{0.5cm}s_{i,m_i}=\var\left(X_i|X_{<i}\right)\ .
\end{eqnarray*}
Applying the chain rule, we check that $t_{i,m_i}$ corresponds to $(i-1)$-th
first elements of the $i$-th line of $T_m$ and $s_{i,m_i}$ is the $i$-th
diagonal element of $S_m$.
Thanks to the previous  property, we derive a bias-variance decomposition for
the Kullback risk
$\mathbb{E}\left[\mathcal{K}(t_i,s_i;\widehat{s}_{i,m_i},\widehat{s}_{i,m_i}
)\right]$.

\begin{prte}\label{prte_basic_kullback_mle}
Assume that $|m_i|$ is smaller than $n-2$. The Kullback risk of
$(\widehat{t}_{i,m_i},\widehat{s}_{i,m_i})$ decomposes as follows 
\begin{eqnarray}
\mathbb{E}\left[\mathcal{K}\left(t_i,s_i;\widehat{t}_{i,m_i},\widehat{s}_{i,m_i}
\right)\right] = \mathcal{K}\left(t_i,s_i;t_{i,m_i},s_{i,m_i}\right)+R_{n,|m_i|}
\ ,\label{decomposition_biais_variance}
\end{eqnarray}
where  $R_{n,d}$ is defined by
\begin{eqnarray*}
R_{n,d}:=\frac{d+1}{n-d-2}+
\frac{d(d+1)}{2(n-d-1)(n-d-2)}+\frac{1}{2}\left[
\Psi\left(n-d\right)+\log\left(1-\frac{d}{n}\right)\right] \ ,
\end{eqnarray*}
and $\Psi(n-d):=\mathbb{E}\left[\log\left(\chi^2(n-d)/(n-d)\right)\right]$.
Besides, $R_{n,d}$ is bounded as follows 
\begin{eqnarray*}
\frac{d+1}{2(n-d-2)}\leq R_{n,d}\leq
\frac{d+1}{n-d-2}+\frac{1}{2}\left[\frac{d+1}{n-d-2}\right]^2\\ \text{and
}R_{n,d} =\frac{d+1}{2(n-d-2)}+\mathcal{O}\left(\frac{d+1}{n}\right)^2\ .
\end{eqnarray*}
\end{prte}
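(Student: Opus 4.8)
The plan is to work entirely inside the $i$-th conditional regression \eqref{modele_regression} and compute the expected Kullback risk explicitly, exploiting the exact distributional structure of the Gaussian least-squares estimators. Writing $d=|m_i|$ and $q(u):=u\,\Sigma_{<i}\,u^*$, where $\Sigma_{<i}$ is the covariance of $X_{<i}$, I would first record the closed form of the conditional divergence between two Gaussian regressions,
\begin{equation*}
\mathcal{K}(t_i,s_i;t'_i,s'_i)=\frac12\left[\log\frac{s'_i}{s_i}+\frac{s_i}{s'_i}-1\right]+\frac{1}{2s'_i}\,q(t_i-t'_i),
\end{equation*}
obtained by averaging the one-dimensional Gaussian KL over $X_{<i}$, which turns the squared mean-gap into $q$. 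Specialising to $(t'_i,s'_i)=(\widehat t_{i,m_i},\widehat s_{i,m_i})$ and taking expectations leaves three contributions to control: $\tfrac12\mathbb{E}\log(\widehat s_{i,m_i}/s_i)$, $\tfrac12\mathbb{E}[s_i/\widehat s_{i,m_i}]-\tfrac12$, and $\tfrac12\mathbb{E}[q(t_i-\widehat t_{i,m_i})/\widehat s_{i,m_i}]$.

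Next I would split the quadratic form by a Pythagorean identity. Since $t_{i,m_i}$ is the $q$-orthogonal projection of $t_i$ onto the subspace supported on $m_i$, and $\widehat t_{i,m_i}-t_{i,m_i}$ lies in that subspace, the cross term vanishes and
\begin{equation*}
q(t_i-\widehat t_{i,m_i})=q(t_i-t_{i,m_i})+q(t_{i,m_i}-\widehat t_{i,m_i})=(s_{i,m_i}-s_i)+q(t_{i,m_i}-\widehat t_{i,m_i}),
\end{equation*}
using $q(t_i-t_{i,m_i})=s_{i,m_i}-s_i$, which follows from $s_{i,m_i}=\var(X_i|X_{m_i})=s_i+q(t_i-t_{i,m_i})$. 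Then I would condition on the design ${\bf X}_{m_i}$, under which \eqref{modele_regression} is an ordinary fixed-design Gaussian regression with noise variance $s_{i,m_i}$: there $\widehat t_{i,m_i}$ and $\widehat s_{i,m_i}$ are independent, $n\widehat s_{i,m_i}/s_{i,m_i}\sim\chi^2(n-d)$ (this law being free of the design), and $\widehat t_{i,m_i}-t_{i,m_i}$ is centred Gaussian with covariance $s_{i,m_i}({\bf X}_{m_i}^*{\bf X}_{m_i})^{-1}$. The moments needed are $\mathbb{E}[\log(\chi^2(n-d)/(n-d))]=\Psi(n-d)$, $\mathbb{E}[1/\chi^2(n-d)]=1/(n-d-2)$, and the inverse-Wishart identity $\mathbb{E}[({\bf X}_{m_i}^*{\bf X}_{m_i})^{-1}]=\Sigma_{m_i}^{-1}/(n-d-1)$ with $\Sigma_{m_i}$ the $m_i\times m_i$ block of $\Sigma_{<i}$, which via a trace gives $\mathbb{E}[q(t_{i,m_i}-\widehat t_{i,m_i})]=s_{i,m_i}\,d/(n-d-1)$.

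Assembling these, the first contribution splits as $\tfrac12[\log(1-d/n)+\Psi(n-d)]+\tfrac12\log(s_{i,m_i}/s_i)$, and the projection term $\tfrac12\log(s_{i,m_i}/s_i)$ is exactly $\mathcal{K}(t_i,s_i;t_{i,m_i},s_{i,m_i})$, the announced bias. The crux—and what I expect to be the main subtlety—is that the remaining, seemingly parameter-dependent pieces cancel: the term $\tfrac12\mathbb{E}[s_i/\widehat s_{i,m_i}]=\tfrac{n}{2(n-d-2)}\,s_i/s_{i,m_i}$ is exactly annihilated by the $s_i/s_{i,m_i}$ part coming from $(s_{i,m_i}-s_i)\,\mathbb{E}[1/\widehat s_{i,m_i}]$ in the Pythagorean split, leaving only the parameter-free quantities $\tfrac{n}{2(n-d-2)}-\tfrac12$ and $\tfrac{nd}{2(n-d-2)(n-d-1)}$. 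A short algebraic simplification checks that their sum equals $\frac{d+1}{n-d-2}+\frac{d(d+1)}{2(n-d-1)(n-d-2)}$, which together with the $\Psi$-term reproduces $R_{n,d}$. This cancellation, together with the population-versus-sample Gram mismatch resolved by the inverse-Wishart mean, is the delicate point; everything upstream is bookkeeping.

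Finally, for the stated two-sided bounds and the $\mathcal{O}((d+1)/n)^2$ expansion, I would isolate the only transcendental term $\tfrac12[\Psi(n-d)+\log(1-d/n)]$ and estimate it through the digamma representation $\Psi(k)=\log2+\psi(k/2)-\log k$ together with standard expansion and convexity bounds for $\psi$ (in particular $\Psi(k)\le0$ by Jensen and $\Psi(k)=-1/k+\mathcal{O}(1/k^2)$). Combining these elementary analytic estimates with the explicit rational terms yields the lower bound $\tfrac{d+1}{2(n-d-2)}$, the quadratic upper bound, and the announced asymptotics; this last part is routine calculus once the exact formula for $R_{n,d}$ is in hand.
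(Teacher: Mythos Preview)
Your proposal is correct and follows essentially the same approach as the paper: the same closed form for the conditional Kullback divergence, the Pythagorean split, the $\chi^2(n-d)$ law of $n\widehat s_{i,m_i}/s_{i,m_i}$, conditional independence of $\widehat t_{i,m_i}$ and $\widehat s_{i,m_i}$ given the design, the inverse-Wishart mean for the prediction error, and digamma-type bounds for $\Psi$. The only organisational difference is that the paper recentres around $s_{i,m_i}$ at the outset (using $s_i+l_i(\widehat t_{i,m_i},t_i)=s_{i,m_i}+l_i(\widehat t_{i,m_i},t_{i,m_i})$), so the parameter-free form of the variance term is immediate rather than emerging from the cancellation you describe.
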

An explicit expression of $R_{n,d}$ is provided in the proof. Applying the chain
rule, we then  derive a bias-variance decomposition for the maximum likelihood
estimator $\widehat{\Omega}_m$.

\begin{cor}\label{corollaire_risque}
Let $m=(m_1,\ldots,m_p)$ be a model such that the size $|m_i|$ of each submodel
is smaller than $n-2$. Then, the Kullback risk of the maximum likelihood
estimator $\widehat{\Omega}_m$ decomposes into
 \begin{eqnarray*}
\mathbb{E}\left[\mathcal{K}\left(\Omega;\widehat{\Omega}_m\right)\right]
=\mathcal{K}\left(\Omega;\Omega_m\right)+\sum_{i=1}^pR_{n,|m_{i}|}\ .
\end{eqnarray*}
\end{cor}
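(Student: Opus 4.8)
The plan is to reduce the corollary directly to the single-component decomposition established in Proposition~\ref{prte_basic_kullback_mle}, using the chain rule quoted just before its statement. The key observation is that the identity $\mathcal{K}(\Omega;\Omega')=\sum_{i=1}^p\mathcal{K}(t_i,s_i;t'_i,s'_i)$ holds for any pair of non-singular precision matrices, and in particular can be applied \emph{pointwise}, i.e. conditionally on the data, when the second argument is the random matrix $\widehat{\Omega}_m$. Since $\widehat{\Omega}_m=\widehat{T}_m^*\widehat{S}_m^{-1}\widehat{T}_m$ and the parameters of the $i$-th conditional distribution it induces are precisely $(\widehat{t}_{i,m_i},\widehat{s}_{i,m_i})$, the $i$-th summand in the chain rule is exactly the conditional divergence $\mathcal{K}(t_i,s_i;\widehat{t}_{i,m_i},\widehat{s}_{i,m_i})$. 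Hence, almost surely,
\begin{eqnarray*}
\mathcal{K}\left(\Omega;\widehat{\Omega}_m\right)=\sum_{i=1}^p\mathcal{K}\left(t_i,s_i;\widehat{t}_{i,m_i},\widehat{s}_{i,m_i}\right)\ .
\end{eqnarray*}

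First I would take expectations in this identity and exchange $\mathbb{E}$ with the finite sum, which is legitimate by linearity of expectation since each term is a non-negative (hence well-defined) random variable, shown finite by Proposition~\ref{prte_basic_kullback_mle}. This yields
\begin{eqnarray*}
\mathbb{E}\left[\mathcal{K}\left(\Omega;\widehat{\Omega}_m\right)\right]=\sum_{i=1}^p\mathbb{E}\left[\mathcal{K}\left(t_i,s_i;\widehat{t}_{i,m_i},\widehat{s}_{i,m_i}\right)\right]\ .
\end{eqnarray*}
Then I would invoke Proposition~\ref{prte_basic_kullback_mle} for each index $i$; this is permitted because the hypothesis $|m_i|\leq n-2$ of the corollary is exactly the one required there. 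Each summand then splits into a bias $\mathcal{K}(t_i,s_i;t_{i,m_i},s_{i,m_i})$ and the variance term $R_{n,|m_i|}$.

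It remains to recombine the $p$ bias terms into a single divergence. For this I would use the identification, already established in the excerpt, of $(t_{i,m_i},s_{i,m_i})$ with the $(i-1)$ first entries of the $i$-th row of $T_m$ and the $i$-th diagonal entry of $S_m$; applying the chain rule once more, this time to the deterministic pair $(\Omega,\Omega_m)$, gives $\sum_{i=1}^p\mathcal{K}(t_i,s_i;t_{i,m_i},s_{i,m_i})=\mathcal{K}(\Omega;\Omega_m)$. Collecting the two sums produces the claimed decomposition. No genuine obstacle arises: the only point requiring a word of care is the pointwise use of the chain rule with the random matrix $\widehat{\Omega}_m$, but since the decomposition is an algebraic identity valid for every realization of the (almost surely non-singular) estimator, the corollary is simply the chain rule applied on both sides of Proposition~\ref{prte_basic_kullback_mle}.
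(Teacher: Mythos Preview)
Your proposal is correct and follows essentially the same approach as the paper, which simply states that the corollary follows from Proposition~\ref{prte_basic_kullback_mle} by applying the chain rule. Your write-up makes the two applications of the chain rule (to the random pair $(\Omega,\widehat{\Omega}_m)$ and to the deterministic pair $(\Omega,\Omega_m)$) explicit, but the argument is the intended one.
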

If the size $|m_i|$ of each submodels is small with respect to $n$, the variance
term 
is of the order $\sum_{i=1}^p(|m_i|+1)/[2(n-|m_i|-2)]$. For other loss functions
such as the Frobenius norm or the $l_2$ operator norm between $\Omega$ and
$\widehat{\Omega}_m$, there is no such bias-variance decomposition with a
variance term that does not depend on the target.

\subsection{Main result}\label{section_main_result}

In this subsection, we state a general non-asymptotic oracle inequality for the
Kullback-Leibler risk of the estimator $\widetilde{\Omega}$. We shall consider
two types of penalty function $pen(.)$: the first one only takes into account
the complexity of the model collection while the second is based on a prior
probability on the model collection.

\begin{defi}\label{complexite_modele}
For any integer $i$ between $2$ and $p$, the complexity function $H_i(.)$ is
defined by
\begin{eqnarray*}
H_i(d) := \frac{1}{d}\log\left|\left\{m\in\mathcal{M}_i,\, |m_i|=d
\right\}\right|\ ,
\end{eqnarray*}
where $d$ is  any integer larger or equal to $1$. Besides, $H_i(0)$ is set to
$0$ for any $i$ between $1$ and $p$.
\end{defi}
These functions are analogous to the complexity measures introduced in
\cite{massart_pente} Sect.1.3 or in \cite{verzelen_regression} Sect.3.2. We
shall obtain an oracle inequality for complexity-based penalties under the
following assumption.~\\~\\
\textit{Assumption} $(\boldsymbol{\mathbb{H}}_{K,\eta})$: Given $K>1$ and
$\eta>0$, the collection $\mathcal{M}$ and the number $\eta$ satisfy
\begin{eqnarray}\label{Hypothese_HK}
\,\forall\ 2\leq i\leq p\ , \forall m_i\in\mathcal{M}_i\ ,\,\,\,\,\,\,
\frac{|m_i|}{n-|m_i|}\left[1+\sqrt{2H_i(|m_i|)}\right]^2\leq \eta<\eta(K)\ ,
\end{eqnarray}
where $\eta(K)$ is defined as
 $\eta(K):= [1-2(3/(K+2))^{1/6}]^2\bigvee [1-(3/K+2)^{1/6}]^2/4$. The
function $\eta(.)$ is positive and increases to one with $K$. 
This condition requires that the size of the collection is not too large.
Assumption ($\mathbb{H}_{K,\eta}$) is similar to the assumption made in
\cite{verzelen_regression} Sect 3.1 for obtaining an oracle inequality in the
linear regression with Gaussian design framework.
We further discuss $(\mathbb{H}_{K,\eta})$ in Sections \ref{section_ordered} and
\ref{section_complete} when considering the particular problems of  ordered and
complete variable selection.

\begin{thrm}\label{main_thrm}
 Let $K>1$ and let $\eta<\eta(K)$. Assume that $n$ is larger than some quantity
$n_0(K)$ only depending on $K$ and that the collection $\mathcal{M}$ satisfies
$(\boldsymbol{\mathbb{H}}_{K,\eta})$. 
If the penalty	$pen(.)$ is lower bounded as follows  
\begin{eqnarray}\label{penalite_complexe}
pen_i(m_i)\geq K\frac{|m_i|}{n-|m_i|}\left(1+\sqrt{2H_i(|m_i|)}\right)^2
\; \text{for any $1\leq i\leq p$ and  $m_i\in \mathcal{M}_i$}\ ,
\end{eqnarray}
then the risk of $\widetilde{\Omega}$ is upper bounded by
\begin{eqnarray}\label{inegalite_oracle}
 \mathbb{E}\left[\mathcal{K}\left(\Omega;\widetilde{\Omega}\right)\right]\leq
L_{K,\eta}\inf_{m\in\mathcal{M}}\left[\mathcal{K}
\left(\Omega;\Omega_m\right)+pen(m)+\frac{p}{n}\right]+ \tau_n\ ,
\end{eqnarray}
where $\tau_n$ is defined by
$$\tau_n=\tau\left(\Omega,K,\eta,n,p\right):=L_{K,\eta}n^{5/2}\left[p+\mathcal{K
}
(\Omega;I_p)\right]\exp\left[-nL_2(K,\eta)\right]\ ,$$ and $L_2(K,\eta)$ is
positive. Here, $I_p$ stands for the identity matrix of size $p$.
\end{thrm}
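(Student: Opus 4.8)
The plan is to exploit the complete decomposition of the problem into $p$ coordinatewise regression problems and then to reduce each of them to the oracle inequality for Gaussian regression with Gaussian design of \cite{verzelen_regression}. Both the penalized criterion and the loss split additively over coordinates: by the chain rule $\mathcal{K}(\Omega;\widetilde{\Omega})=\sum_{i=1}^p \mathcal{K}(t_i,s_i;\widehat{t}_{i,\widehat{m}_i},\widehat{s}_{i,\widehat{m}_i})$, while the selection rule factorises as $\widehat{m}_i=\arg\min_{m_i\in\mathcal{M}_i}\log(\widehat{s}_{i,m_i})+pen_i(m_i)$. Hence it suffices to establish, for each fixed $i$, a coordinatewise oracle inequality for the conditional Kullback risk $\mathbb{E}[\mathcal{K}(t_i,s_i;\widehat{t}_{i,\widehat{m}_i},\widehat{s}_{i,\widehat{m}_i})]$ and then to sum over $i$. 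In view of (\ref{modele_regression}), selecting $m_i\subset\{1,\dots,i-1\}$ is exactly a variable-selection problem in the Gaussian-design regression $X[i]=\sum_{j<i}-t_i[j]X[j]+\epsilon_i$ with noise level $s_i$, and the penalty condition (\ref{penalite_complexe}) is precisely the coordinatewise requirement needed there.

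Fix $i$ and a competitor $m_i$. By definition of $\widehat{m}_i$,
$$\log\widehat{s}_{i,\widehat{m}_i}+pen_i(\widehat{m}_i)\leq \log\widehat{s}_{i,m_i}+pen_i(m_i)\ .$$
The first step is to relate each empirical log-variance $\log\widehat{s}_{i,m_i}$ to the conditional Kullback divergence $\mathcal{K}(t_i,s_i;t_{i,m_i},s_{i,m_i})$ plus centred fluctuation terms. Writing the residual ${\bf X}_i+{\bf X}_{<i}\widehat{t}_{i,m_i}^*$ through the projection $(t_{i,m_i},s_{i,m_i})$ of $(t_i,s_i)$ onto the model, I would decompose $\widehat{s}_{i,m_i}$ into the true conditional variance $s_{i,m_i}$, a chi-square term of roughly $|m_i|$ degrees of freedom coming from the estimation of the $|m_i|$ regression coefficients, and a noise term of $n-|m_i|$ degrees of freedom. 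Since ${\bf X}_{<i}$ is random rather than fixed, $\widehat{s}_{i,m_i}/s_{i,m_i}$ is a ratio of quadratic forms governed by an $|m_i|$-dimensional Wishart matrix; linearising $\log$ around $1$ is legitimate precisely because Assumption $(\boldsymbol{\mathbb{H}}_{K,\eta})$ keeps $|m_i|/(n-|m_i|)$ away from the degenerate regime.

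The heart of the argument is a uniform control, over all $m_i\in\mathcal{M}_i$, of the deviations of these chi-square and quadratic-form quantities, so that the penalty (\ref{penalite_complexe}) absorbs them. I would combine Laurent--Massart deviation inequalities for $\chi^2$ variables with a union bound over the $|\{m_i\in\mathcal{M}_i:|m_i|=d\}|$ models of each dimension $d$: this is exactly where the complexity $H_i(d)$ enters. The factor $(1+\sqrt{2H_i(|m_i|)})^2$ is the square of $1+\sqrt{2H_i(|m_i|)}$, which arises because the estimation term fluctuates on the scale $\sqrt{|m_i|}$ while the union bound over the $e^{|m_i|H_i(|m_i|)}$ models of dimension $|m_i|$ adds a deviation on the scale $\sqrt{2|m_i|H_i(|m_i|)}$. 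Assumption $(\boldsymbol{\mathbb{H}}_{K,\eta})$, with the explicit threshold $\eta(K)<1$, simultaneously guarantees that every design submatrix ${\bf X}_{m_i}$ is invertible and well conditioned and that the ratios $\widehat{s}_{i,m_i}/s_{i,m_i}$ concentrate around $1$; the shape of $\eta(K)$ is tuned so that once $pen_i(m_i)\geq K|m_i|(1+\sqrt{2H_i(|m_i|)})^2/(n-|m_i|)$ the fluctuations are dominated with slack proportional to $K-1$. On the resulting high-probability event the coordinatewise inequality yields $\mathbb{E}[\mathcal{K}(t_i,s_i;\widehat{t}_{i,\widehat{m}_i},\widehat{s}_{i,\widehat{m}_i})]\leq L_{K,\eta}\inf_{m_i\in\mathcal{M}_i}[\mathcal{K}(t_i,s_i;t_{i,m_i},s_{i,m_i})+pen_i(m_i)]+L_{K,\eta}/n$, where the additive $1/n$ collects the irreducible variance term $R_{n,|m_i|}$ of Proposition \ref{prte_basic_kullback_mle}.

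Summing these $p$ inequalities gives the announced bound, the $\sum_i 1/n=p/n$ term being exactly the price of the $p$ unavoidable variance contributions. It remains to handle the complementary ``bad'' event, of probability at most $e^{-nL_2(K,\eta)}$, on which the concentration bounds fail: there I would bound $\mathcal{K}(\Omega;\widetilde{\Omega})$ crudely and control its expectation on that event by a polynomial factor in $n$ times $p+\mathcal{K}(\Omega;I_p)$, which produces the remainder $\tau_n=L_{K,\eta}n^{5/2}[p+\mathcal{K}(\Omega;I_p)]e^{-nL_2(K,\eta)}$; the constraint $n\geq n_0(K)$ is what makes this remainder negligible. I expect the main obstacle to be the third step: because the design is random Gaussian, $\widehat{s}_{i,m_i}$ is not a clean chi-square but a ratio driven by an $|m_i|$-dimensional Wishart matrix that can be nearly singular when $|m_i|$ approaches $n$, so the uniform control of $\log\widehat{s}_{i,m_i}$ and the conversion of the empirical criterion back into the Kullback risk require delicate non-asymptotic bounds on conditioning. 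This is precisely the technical content that Assumption $(\boldsymbol{\mathbb{H}}_{K,\eta})$ and the threshold $\eta(K)$ are designed to make tractable, and that is imported from the companion regression analysis.
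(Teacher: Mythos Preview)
Your proposal is correct and follows essentially the same route as the paper: reduce to $p$ coordinatewise conditional-regression problems via the chain rule, establish for each $i$ an oracle inequality in the spirit of \cite{verzelen_regression} by controlling $\chi^2$ and Wishart-eigenvalue fluctuations uniformly over $\mathcal{M}_i$ (with the complexity $H_i$ entering through the union bound), split into a high-probability event and its complement, and sum. The paper packages the coordinatewise step as a separate Proposition and organises the fluctuation control through an explicit four-term decomposition $R_1,\ldots,R_4$ of $\mathcal{K}(t,s;\widetilde{t},\widetilde{s})$ rather than linearising $\log\widehat{s}_{i,m_i}$ directly, but the underlying ingredients and the role of $(\mathbb{H}_{K,\eta})$ are exactly those you identify.
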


\begin{remark}
 This theorem tells us that $\widetilde{\Omega}$ performs almost as well as the
best trade-off between the bias term $\mathcal{K}(\Omega;\Omega_m)$ and the
penalty term $pen(m)$. The term $p/n$ is unavoidable since it is of the same
order as the variance term for the null model by Corollary
\ref{corollaire_risque}. The error term $\tau_n$ is considered as negligible
since converges exponentially fast to $0$ with $n$.
\end{remark}

\begin{remark}
 The result is non-asymptotic and holds for arbitrary large $p$ as longs $n$ is
larger than the quantity $n_0(K)$ (independent of $p$). There is no hidden
dependency on $p$ except in the complexity functions $H_i(.)$  and Assumption
$(\mathbb{H}_{K,\eta})$ that we shall discuss for particular cases in Sections
\ref{section_oracle_ordonne} and \ref{section_oracle_complet}. Besides, we are
not performing any assumption on the true precision matrix $\Omega$ except that
it is invertible. In particular, we do not assume that it is sparse and we give
a rate of convergence that only depends on a bias variance trade-off. Besides,
there is no hidden constant that depends on $\Omega$ (except for $\tau_n$). 
\end{remark}

\begin{remark}
Finally, the penalty introduced in this theorem only depends on the collection
$\mathcal{M}$ and on a number $K>1$. One chooses the parameter $K$ depending on
how conservative  one wants the procedure to be. We further discuss the
practical choice of $K$ in Sections \ref{section_ordered} and
\ref{section_complete}. In any case, the main point is that we do not need any
additional method to calibrate the penalty. 
\end{remark}

\subsection{Penalties based on a prior distribution}\label{section_prior}

The penalty defined in Theorem \ref{main_thrm} only depends on the models
through their cardinality. However, the methodology developed in the proof
easily extend to the case where the user has some \emph{prior} knowledge of the
relevant models. \\

Suppose we are give a prior probability measure
$\pi_{\mathcal{M}}=\pi_{\mathcal{M}_1}\times \ldots \times \pi_{\mathcal{M}_p}$
on the collection $\mathcal{M}$.  For any non-empty model $m_i\in\mathcal{M}_i$,
we define $l^{(i)}_{m_i}$ by 
\begin{eqnarray}\label{Hypothese_HKl}
\, \forall\ 2\leq i\leq p\ , \forall m_i\in\mathcal{M}_i\ ,\,\,\,\,\,\,
l^{(i)}_{m_i}:=-\frac{\log\left(\pi_{\mathcal{M}_i}(m_i)\right)}{|m_i|}\ .
\end{eqnarray}
By convention, we set $l^{(i)}_{\emptyset}$ to $1$. We define in the next
proposition penalty functions based on the quantity $l^{(i)}_m$ that allow to
get non-asymptotic oracle inequalities.~\\~\\
%
%
%
%%%%%%%%%%%%%%%%%%%%%%%%%%%%%%%%%%%%%%%
%%%%%%% Reformulation avec un a priori
%%%%%%%%%%%%%%%%%%%%%%%%%%%%%%%%%%%%%%
%
% 
%%
% %
%%
%%
% 
\textit{Assumption} $(\mathbb{H}^{\text{\emph{bay}}}_{K,\eta})$: Given $K>1$ and
$\eta>0$, the collection $\mathcal{M}$, the numbers $l^{(i)}_m$ and the number
$\eta$ satisfy
\begin{eqnarray}\label{Hypothese_HK2}
\,\forall\ 2\leq i\leq p\ , \forall m_i\in\mathcal{M}_i\ ,\,\,\,\,\,\,
\frac{|m_i|}{n-|m_i|}\left[1+\sqrt{2l^{(i)}_{m_i}}\right]^2\leq \eta<\eta(K)\ ,
\end{eqnarray}
where $\eta(K)$ is defined as in $(\mathbb{H}_{K,\eta})$.

\begin{prte}\label{proposition_priori}
Let $K>1$ and let $\eta<\eta(K)$. Assume that $n\geq n_0(K)$ and that Assumption
$(\mathbb{H}^\text{bay}_{K,\eta})$ is fulfilled. If the penalty	$pen(.)$ is
lower bounded as follows  
\begin{eqnarray}\label{penalite_complexe_bay}
pen_i(m_i)\geq K\frac{|m_i|}{n-|m_i|}\left(1+\sqrt{2l^{(i)}_{m_i}}\right)^2
\, \, \, \text{for any $1\leq i\leq p$ and  any $m_i\in \mathcal{M}_i$}\ ,
\end{eqnarray}
then the risk of $\widetilde{\Omega}$ is upper bounded by
\begin{eqnarray}\label{inegalite_oracle_bay}
 \mathbb{E}\left[\mathcal{K}\left(\Omega;\widetilde{\Omega}\right)\right]\leq
L_{K,\eta}\inf_{m\in\mathcal{M}}\left[\mathcal{K}
\left(\Omega;\Omega_m\right)+pen(m)+\frac{p}{n}\right]+ \tau_n\ ,
\end{eqnarray}
where $L_{K,\eta}$ and $\tau_n$ are the same as in Theorem \ref{main_thrm}.
\end{prte}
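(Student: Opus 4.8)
The plan is to observe that Proposition \ref{proposition_priori} is proved exactly along the lines of Theorem \ref{main_thrm}, the only change being the substitution of the complexity weights $H_i(|m_i|)$ by the prior-based weights $l^{(i)}_{m_i}$. Since the penalty is additive, $pen(m)=\sum_{i=1}^p pen_i(m_i)$, and the negative log-likelihood splits row by row, the selection decouples into $p$ independent problems $\widehat{m}_i=\arg\min_{m_i\in\mathcal{M}_i}\log(\widehat{s}_{i,m_i})+pen_i(m_i)$, and the product structure $\pi_{\mathcal{M}}=\prod_i\pi_{\mathcal{M}_i}$ respects this decoupling. By the chain rule $\mathcal{K}(\Omega;\Omega')=\sum_i\mathcal{K}(t_i,s_i;t_i',s_i')$, it suffices to establish, for each $i$, an oracle inequality for the conditional Kullback risk $\mathbb{E}[\mathcal{K}(t_i,s_i;\widehat{t}_{i,\widehat{m}_i},\widehat{s}_{i,\widehat{m}_i})]$ and then sum. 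Conditionally on $\mathbf{X}_{<i}$, row $i$ is the Gaussian regression (\ref{modele_regression}), so I would reuse the random-design regression machinery of \cite{verzelen_regression}.

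First I would isolate the place in the proof of Theorem \ref{main_thrm} where $H_i$ actually enters. Starting from the defining inequality $\log(\widehat{s}_{i,\widehat{m}_i})+pen_i(\widehat{m}_i)\leq\log(\widehat{s}_{i,m_i})+pen_i(m_i)$, valid for every competing $m_i$, one controls the empirical conditional variances $\widehat{s}_{i,m_i}$ through $\chi^2$-type concentration together with a bound on the fluctuation of the quadratic form governing the least-squares residual. These concentration inequalities yield, for each fixed model, a deviation term of the form $\exp(-c\,|m_i|\,x_{m_i})$ with a free exponent $x_{m_i}$, and the penalty lower bound (\ref{penalite_complexe_bay}) is calibrated precisely so that it absorbs the leading $(1+\sqrt{2x_{m_i}})^2$ fluctuation when $x_{m_i}$ is chosen equal to the weight attached to $m_i$.

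The only genuinely collection-dependent step is the weighted union bound over all models of $\mathcal{M}_i$, where the total mass $\Sigma_i:=\sum_{m_i\in\mathcal{M}_i}\exp(-|m_i|\,x_{m_i})$ must be finite and controlled. For the complexity penalty, the choice $x_{m_i}=H_i(|m_i|)$ gives $\sum_{|m_i|=d}\exp(-d\,H_i(d))=1$ for each dimension $d$, hence $\Sigma_i\leq p$. For the prior penalty I would instead take $x_{m_i}=l^{(i)}_{m_i}$, so that $\exp(-|m_i|\,l^{(i)}_{m_i})=\pi_{\mathcal{M}_i}(m_i)$ and $\Sigma_i=\sum_{m_i}\pi_{\mathcal{M}_i}(m_i)\leq 1$ on the non-empty models, the empty model contributing a further bounded term under the convention $l^{(i)}_\emptyset=1$. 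Since $\Sigma_i$ is here even smaller than in the complexity case, the same constants $L_{K,\eta}$, $L_2(K,\eta)$ and the same residual $\tau_n$ go through unchanged; Assumption $(\mathbb{H}^{\text{bay}}_{K,\eta})$ plays the role of $(\mathbb{H}_{K,\eta})$ in keeping the dimensions small enough for the concentration step to remain valid.

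The main obstacle I expect is bookkeeping rather than a new idea: one must check that every occurrence of $H_i(|m_i|)$ in the deviation bounds of Theorem \ref{main_thrm} is used only through the scalar $|m_i|\,H_i(|m_i|)=\log\left|\{m'\in\mathcal{M}_i:\,|m'|=|m_i|\}\right|$, so that it can be replaced cleanly by $|m_i|\,l^{(i)}_{m_i}=-\log\pi_{\mathcal{M}_i}(m_i)$ without disturbing the concentration arguments, and that the summation over $i$ from $1$ to $p$ still produces only the additive $p/n$ term and the exponentially small $\tau_n$. Handling the empty-model convention ($H_i(0)=0$ versus $l^{(i)}_\emptyset=1$) is immediate, since the factor $|m_i|/(n-|m_i|)$ vanishes there and both penalties agree on the null model.
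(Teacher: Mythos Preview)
Your proposal is correct and follows essentially the same route as the paper, which explicitly says that ``the methodology developed in the proof easily extend[s]'' to the prior-based penalty and defers the details to the technical appendix. You have identified the one genuinely new ingredient: in the union bound over $\mathcal{M}_i$, the choice $x_{m_i}=l^{(i)}_{m_i}$ makes the summed weights $\sum_{m_i}\exp(-|m_i|\,l^{(i)}_{m_i})=\sum_{m_i}\pi_{\mathcal{M}_i}(m_i)\leq 1$, so the concentration lemmas \ref{concentration_R1}--\ref{concentration_R4} and the residual $\tau_n$ carry over with the same constants once $(\mathbb{H}^{\text{bay}}_{K,\eta})$ replaces $(\mathbb{H}_{K,\eta})$.
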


The proof is postponed to the technical Appendix~\cite{technical}.

\begin{remark}
In this proposition, the penalty (\ref{penalite_complexe_bay}) as well as the
risk bound (\ref{inegalite_oracle_bay}) depend on the prior distribution
$\pi_{\mathcal{M}}$. In fact, the bound (\ref{inegalite_oracle_bay}) means that
$\widetilde{\Omega}$ achieves the trade-off between the bias and some prior
weight, which is of the order $-\log[\pi_{\mathcal{M}}(m)]/n\ .$
This emphasizes that $\widetilde{\Omega}$ favours models with a high prior
probability. Similar risk bounds are obtained in the fixed design regression
framework in Birg\'e and Massart~\cite{birge98}. 
\end{remark}

\begin{remark}
Roughly speaking,  Assumption ($\mathbb{H}_{K,\eta}^\text{\emph{bay}}$) requires
that the prior probabilities $\pi_{\mathcal{M}_i}(m_i)$ are not exponentially
small with respect to $n$.
 \end{remark}

\section{Adaptive banding}\label{section_ordered}

In this section, we apply our method ChoSelect to the adaptive banding problem
and we investigate its theoretical properties.

\subsection{Oracle inequalities}\label{section_oracle_ordonne}

Let $d$ be some fixed positive integer which stands for the largest dimension of
the models $m_i$.
For any $2\leq i\leq p$, we consider the ordered collections
$$\mathcal{M}_{i,\text{ord}}^{d}:=\left\{\emptyset,\{1\},\{1,2\},\ldots,\{
1\wedge (i-d),\ldots, i-1\} \right\}\ ,$$
and $\mathcal{M}_{1,\text{ord}}^{d}:=\{\emptyset\}$. A model
$m=\left(\emptyset,\ldots , \{1,\ldots, k_i\}, \ldots , \{1,\ldots,
k_p\}\right)$ in the collection $\mathcal{M}_{\text{ord}}^{d}$ corresponds to
the set of matrices $T$ such that on each line $i$ of $T$, only the $k_i$
closest entries to the diagonal are possibly non-zero. This collection of models
is suitable when the matrix $T$ is approximately banded.\\

For any $1\leq i\leq p$ and any model $m_i$ in $\mathcal{M}_{i,\text{ord}}^d$ we
fix the penalty 
\begin{eqnarray}\label{penalite_ordonne}
pen_i(m_i)=K\frac{|m_i|}{n-|m_i|}\ .
\end{eqnarray}
We write $\widetilde{\Omega}^d_{\text{ord}}$ for the estimator
$\widetilde{\Omega}$ defined with the collection $\mathcal{M}^d_{\text{ord}}$
and the penalty (\ref{penalite_ordonne}).
\begin{cor}\label{cor_oracle_ordonne}
Let $K>1$, $\eta$ smaller than $\eta(K)$. Assume that $d\leq
n\frac{\eta}{1+\eta}$.
 If $n$ is larger than some quantity $n_0(K)$, then  
\begin{eqnarray}\label{oracle_ordonne}
 \mathbb{E}\left[\mathcal{K}\left(\Omega;\widetilde{\Omega}^d_{\text{\emph{ord}}
}\right)\right]\leq
L_{K,\eta}\inf_{m\in\mathcal{M}^d_{\text{\emph{ord}}}}\mathbb{E}\left[\mathcal{K
}
\left(\Omega;\widehat{\Omega}_m\right)\right]+
\tau_n\left(\Omega,K,\eta,n,p\right)\ .
\end{eqnarray}
\end{cor}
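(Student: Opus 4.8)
The plan is to derive Corollary~\ref{cor_oracle_ordonne} as a direct specialization of Theorem~\ref{main_thrm} to the ordered collection $\mathcal{M}^d_{\text{ord}}$. The first task is to verify that the simple penalty $pen_i(m_i)=K|m_i|/(n-|m_i|)$ in (\ref{penalite_ordonne}) satisfies the lower bound (\ref{penalite_complexe}) required by the theorem. This reduces to controlling the complexity function $H_i$ for the ordered collection. Since $\mathcal{M}^d_{i,\text{ord}}$ is a \emph{nested} chain of models, for each dimension $d'$ between $0$ and $\min(i-1,d)$ there is exactly one model $m_i$ with $|m_i|=d'$. Hence $\left|\{m_i\in\mathcal{M}_{i,\text{ord}}^d:|m_i|=d'\}\right|=1$, which gives $H_i(d')=\frac{1}{d'}\log 1 = 0$ for every nonempty model. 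Consequently the factor $(1+\sqrt{2H_i(|m_i|)})^2$ equals $1$, and (\ref{penalite_complexe}) becomes exactly $pen_i(m_i)\geq K|m_i|/(n-|m_i|)$, which (\ref{penalite_ordonne}) meets with equality.

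Next I would check Assumption $(\boldsymbol{\mathbb{H}}_{K,\eta})$ for this collection. Again using $H_i(|m_i|)=0$, condition (\ref{Hypothese_HK}) simplifies to $\frac{|m_i|}{n-|m_i|}\leq\eta$ for all $m_i$. Since $|m_i|\leq d$ and the map $x\mapsto x/(n-x)$ is increasing on $[0,n)$, it suffices to verify $\frac{d}{n-d}\leq\eta$; the hypothesis $d\leq n\frac{\eta}{1+\eta}$ is precisely the rearrangement of $\frac{d}{n-d}\leq\eta$, so the assumption holds with the stated $\eta<\eta(K)$. With $(\boldsymbol{\mathbb{H}}_{K,\eta})$ verified, the penalty validated, and $n\geq n_0(K)$ assumed, Theorem~\ref{main_thrm} applies and yields
\begin{eqnarray*}
\mathbb{E}\left[\mathcal{K}\left(\Omega;\widetilde{\Omega}^d_{\text{ord}}\right)\right]\leq L_{K,\eta}\inf_{m\in\mathcal{M}^d_{\text{ord}}}\left[\mathcal{K}\left(\Omega;\Omega_m\right)+pen(m)+\frac{p}{n}\right]+\tau_n\ .
\end{eqnarray*}

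The remaining step is cosmetic but requires a small argument: I must rewrite the bracketed quantity $\mathcal{K}(\Omega;\Omega_m)+pen(m)+p/n$ in terms of the parametric risk $\mathbb{E}[\mathcal{K}(\Omega;\widehat{\Omega}_m)]$ that appears in (\ref{oracle_ordonne}). By Corollary~\ref{corollaire_risque}, $\mathbb{E}[\mathcal{K}(\Omega;\widehat{\Omega}_m)]=\mathcal{K}(\Omega;\Omega_m)+\sum_{i=1}^p R_{n,|m_i|}$, so I need to show that $pen(m)+p/n$ and $\sum_{i=1}^p R_{n,|m_i|}$ are comparable up to a constant depending only on $K,\eta$. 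Using the lower bound $R_{n,d}\geq\frac{d+1}{2(n-d-2)}$ from Proposition~\ref{prte_basic_kullback_mle}, one checks that $pen(m)=\sum_i K|m_i|/(n-|m_i|)$ together with the $p/n$ term (which accounts for the $+1$ contributions) is bounded by a $K,\eta$-dependent multiple of $\sum_i R_{n,|m_i|}$, and conversely the upper bound on $R_{n,d}$ controls the variance by the penalty. Absorbing these comparisons into the universal constant $L_{K,\eta}$ converts the oracle inequality into the form (\ref{oracle_ordonne}).

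The main obstacle is the last comparison: one must carefully match the variance term $\sum_i R_{n,|m_i|}$ against $pen(m)+p/n$, since $R_{n,d}$ behaves like $\frac{d+1}{2(n-d-2)}$ whereas the penalty scales like $\frac{d}{n-d}$, so the $+1$ per coordinate (summing to a $p/n$-type term across the $p$ rows) and the shift $n-d-2$ versus $n-d$ must be absorbed using the constraint $|m_i|/(n-|m_i|)\leq\eta$ to ensure the denominators stay comparable. Everything else is a bookkeeping verification that the hypotheses of Theorem~\ref{main_thrm} are met.
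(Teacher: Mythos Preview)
Your proposal is correct and follows exactly the paper's approach: the paper's proof is the terse three-line statement that $H_i\equiv 0$ for the nested collections $\mathcal{M}^d_{i,\text{ord}}$, that $(\mathbb{H}_{K,\eta})$ therefore holds, and that the conclusion follows by combining Theorem~\ref{main_thrm} with Proposition~\ref{prte_basic_kullback_mle}. You have simply filled in the details of each of these steps, including the conversion between $\mathcal{K}(\Omega;\Omega_m)+pen(m)+p/n$ and $\mathbb{E}[\mathcal{K}(\Omega;\widehat{\Omega}_m)]$ via the bounds on $R_{n,d}$; note that only the lower bound on $R_{n,d}$ is actually needed for the direction required.
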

This bound is a direct application of Theorem \ref{main_thrm}.

\begin{remark}
 The term $\tau_n$ is defined in Theorem \ref{main_thrm} and is considered as
negligible since it converges to $0$ exponentially fast towards $0$. Hence, the
penalized estimator $\widetilde{\Omega}$ achieves an oracle inequality
\emph{without} any assumption on the target $\Omega$.
\end{remark}

\begin{remark}
This oracle inequality is non-asymptotic and holds for any $p$ and any $n$
larger
than $n_0(K)$. Moreover, by choosing a constant $K$ large enough, one can
consider a maximal dimension of model $d$ up to the order of $n$, because
$\eta(K)$ converges to one when $K$ increases.
\end{remark}

\noindent
 {\it Choice of the parameters $K$ and $d$.} Setting $K$ to $2$ gives a
criterion close to $AICc$ (see for instance \cite{MacQuarrie}). Besides,
Verzelen \cite{verzelen_regression} (Prop.3.2)  has justified in a close
framework this choice of $K$ is asymptotically optimal. A choice of $K=3$ is
advised if one wants a more conservative procedure. We have stated Corollary
\ref{cor_oracle_ordonne} for models $m_i$ of size  smaller than
$d=\frac{\eta}{1+\eta}n$. In practice, taking  the size  $n/2$ yields rather
good results even if it is not completely ensured by the theory. 

~\\
{\it Computational cost}. The procedure is fast in this setting. Indeed, its
complexity is the same as $p$ times the complexity of an ordered variable
selection in a classical regression framework. From numerical comparisons, it
seems to be slightly faster than the methods of Bickel and
Levina~\cite{bickel08} and Levina et al.~\cite{levina08} which require 
cross-validation type strategies.

\subsection{Adaptiveness with respect to
ellipsoids}\label{section_minimax_ordonne}

We now state that the estimator $\widetilde{\Omega}^d_{\text{ord}}$ is
simultaneously minimax over a large class of sets that we call ellipsoids.

\begin{defi}\label{definition_ellipsoide} 
Let $(a_i)_{1\leq i\leq p-1}$ be a non-increasing sequence of positive numbers
such that $a_1=1$ and let $R$ be a positive number. Then, the set
$\mathcal{E}(a,R,p)$ is made of all the non-singular matrices
$\Omega=T^*S^{-1}T$ where $S$ is in $Diag(p)$ and $T$ is a lower triangular
matrix with unit diagonal that satisfies the following property
\begin{eqnarray}\label{definition_ellipsoide_equation}
\sum_{j=1}^{i-1} \frac{T[i,i-j]^2}{a_j^2}\leq R^2\ ,\hspace{1cm}\forall\,  2\leq
i\leq p\ .
\end{eqnarray}
\end{defi}
By convention, we set $a_p=0$. The sequence $(a_i)$ measures the rate of decay
of each line of $T$ when one moves away the diagonal. Observe that in this
definition, every line of $T$ decreases the same rate. To the price of more
technicity, we can also allow different rates of decay for each line of $T$. We
shall restrict ourselves to covariance matrices with eigenvalues that lie in a
compact when considering the ellipsoid $\mathcal{E}(a,R,p)$
\begin{eqnarray}\label{definition_gamma}
 \mathcal{B}_{\text{op}}(\gamma) :=
\left\{\varphi_{\text{min}}\left(\Omega\right)\geq \frac{1}{\gamma}\text{ and
}\varphi_{\text{max}}\left(\Omega\right)\leq \gamma\right\}\ .
\end{eqnarray}

\begin{prte}\label{minoration_ellipsoide}
For any ellipsoid $\mathcal{E}(a,R,p)$, the minimax rates of estimation is lower
bounded by  
\begin{eqnarray}\label{minoration_ellipsoide_equation}
 \inf_{\widehat{\Omega}}\sup_{\Omega\in\mathcal{E}(a,R,p)}\mathbb{E}\left[
\mathcal{K}\left(\Omega;\widehat{\Omega}\right)\right]\geq
Lp\sup_{k=1,\ldots,\lfloor\sqrt{n}\rfloor}\left(R^2a^2_{k}\wedge\frac{k+1}{n}
\right)\ .
\end{eqnarray}
Let us consider the estimator $\widetilde{\Omega}^d_{\text{ord}}$ defined in
Section \ref{section_oracle_ordonne} with $d=\lfloor n\frac{\eta}{1+\eta}\rfloor
$ and the penalty (\ref{penalite_ordonne}). We also fix $\gamma>2$. If the
sequence $(a_i)_{1\leq i\leq p}$ and $R$ also satisfy $R^2\geq \frac{1}{n}$ and
$a_{\lfloor \sqrt{n}\rfloor \wedge p}^2\leq \frac{1}{R^2\sqrt{n}}$, then 
\begin{eqnarray}\label{adaptation_ellipsoide_equation}
\sup_{\Omega\in\mathcal{E}(a,R,p)\cap\mathcal{B}_{op}(\gamma)}\mathbb{E}\left[
\mathcal{K}\left(\Omega;\widetilde{\Omega}^d_{\text{\emph{Co}}}\right)\right]
\leq 
L_{K,\eta,\beta,\gamma}\inf_{\widehat{\Omega}}\sup_{\Omega\in\mathcal{E}(a,R,
p)\cap
\mathcal{B}_{op}(\gamma)}\mathbb{E}\left[\mathcal{K}\left(\Omega;\widehat{\Omega
}\right)\right] \ ,
\end{eqnarray}
if $n$ is larger than $n_0(K)$
\end{prte}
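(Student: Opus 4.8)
The plan is to prove the two assertions separately: the minimax lower bound \eqref{minoration_ellipsoide_equation} and the adaptivity upper bound \eqref{adaptation_ellipsoide_equation}. Both hinge on the additive chain-rule identity $\mathcal{K}(\Omega;\Omega')=\sum_{i=1}^p\mathcal{K}(t_i,s_i;t_i',s_i')$ recalled in Section \ref{section_decomposition}, which reduces matters to $p$ one-line Gaussian regression problems sharing a common design. The loss being additive over lines is what lets me tensorize the lower-bound construction over $i$ and, conversely, sum the per-line bias-variance trade-offs in the upper bound.

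For the lower bound I fix $k\in\{1,\ldots,\lfloor\sqrt{n}\rfloor\}$ and build a hypercube of competing precision matrices around the baseline $\Omega=I_p$, perturbing only the first $k$ subdiagonals of $T$: for each line $i>k$ and each $j\le k$ I set $T[i,i-j]=\pm\delta$ according to a sign vector $\omega$, keeping $S=I_p$. I take $\delta^2$ of order $(R^2a_k^2/k)\wedge(1/n)$. The factor $R^2a_k^2/k$ secures membership in $\mathcal{E}(a,R,p)$, since $\sum_{j\le k}\delta^2/a_j^2\le k\delta^2/a_k^2\le R^2$ because $(a_j)$ is non-increasing; the factor $1/n$ together with the cutoff $k\le\sqrt{n}$ forces $k\delta\lesssim 1$, so every $T$ stays uniformly well-conditioned and invertible and the design covariance stays within a constant factor of $I_p$. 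Consequently each conditional term satisfies $\mathcal{K}(t_i,s_i;t_i',s_i')\approx\frac12\|t_i-t_i'\|^2$, so $\mathcal{K}(\Omega_\omega;\Omega_{\omega'})$ is, to leading order, $\frac12\delta^2$ times the Hamming distance between vertices, and the $n$-sample Kullback between neighbours is of order $n\delta^2\lesssim 1$. Assouad's lemma over the $\asymp pk$ perturbed coordinates, each carrying a loss contribution of order $\delta^2$, then yields a lower bound of order $pk\,\delta^2\asymp p\,(R^2a_k^2\wedge (k+1)/n)$; optimising over $k$ gives \eqref{minoration_ellipsoide_equation}.

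For the upper bound I start from the oracle inequality of Corollary \ref{cor_oracle_ordonne} together with the bias-variance decomposition of Corollary \ref{corollaire_risque}, so it suffices to control $\inf_{m\in\mathcal{M}^d_{\text{ord}}}[\mathcal{K}(\Omega;\Omega_m)+\sum_iR_{n,|m_i|}]$ uniformly over the ellipsoid. Restricting to models with a common bandwidth $k\le d$ (legitimate since it only enlarges the infimum) gives a variance part of order $p(k+1)/n$ by the bounds on $R_{n,d}$ in Proposition \ref{prte_basic_kullback_mle}. For the bias, the projection interpretation of $t_{i,m_i}$ bounds $\mathcal{K}(\Omega;\Omega_m)$ by $\sum_i\|t_i-t_{i,m_i}\|^2$ up to a conditioning factor supplied by $\Omega\in\mathcal{B}_{\text{op}}(\gamma)$, while \eqref{definition_ellipsoide_equation} yields the tail estimate $\sum_{j>k}T[i,i-j]^2\le a_k^2R^2$; hence the bias is of order $p\,R^2a_k^2$ up to a constant depending on $\gamma$. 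The risk is therefore at most a constant times $p\inf_{k\le d}\max(R^2a_k^2,(k+1)/n)$ plus $\tau_n$. To match the lower bound I use that $k\mapsto R^2a_k^2$ is non-increasing while $k\mapsto(k+1)/n$ is increasing: writing $k^\ast$ for the crossing index, one evaluates the infimum at $k^\ast+1$, where the maximum is achieved by the slowly varying sequence $(k^\ast+2)/n\le 2(k^\ast+1)/n\le 2\sup_k\min(R^2a_k^2,(k+1)/n)$. The hypotheses $R^2\ge 1/n$ and $a_{\lfloor\sqrt n\rfloor\wedge p}^2\le1/(R^2\sqrt n)$ are exactly what place $k^\ast$ in the admissible range (at least $1$ and at most $\lfloor\sqrt n\rfloor\le d$), and $\tau_n$ is absorbed because the lower bound is at least of order $p/n$ while $\tau_n$ decays exponentially, uniformly over $\mathcal{B}_{\text{op}}(\gamma)$ since $\mathcal{K}(\Omega;I_p)\lesssim_\gamma p$ there.

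The main obstacle, in both directions, is the dependence of the Gaussian design on the unknown parameters. In the lower bound the covariance of $X_{<i}$ is no longer exactly $I$ once $T$ is perturbed, so the conditional Kullback terms must be compared with $\frac12\|t_i-t_i'\|^2$ through an operator-norm control of the deviation of $\Sigma$ from the identity; it is precisely the cutoff $k\le\lfloor\sqrt n\rfloor$ that keeps $\|T-I_p\|$ bounded and makes these approximations uniform over the hypercube. In the upper bound the same difficulty reappears when turning the log-determinant bias $\log(s_{i,m_i}/s_i)$ into the squared norm of the discarded coefficients, which is where the factor depending on $\gamma$ enters and why the restriction to $\mathcal{B}_{\text{op}}(\gamma)$ is needed.
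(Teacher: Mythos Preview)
Your proposal is correct in outline, and the upper-bound half matches the paper's approach exactly: feed the ellipsoid tail estimate $\sum_{j>k}T[i,i-j]^2\le a_k^2R^2$ into the oracle inequality of Corollary~\ref{cor_oracle_ordonne} via the bias--variance decomposition of Corollary~\ref{corollaire_risque}, balance at the crossing index $k^\ast$, and absorb $\tau_n$ using $\mathcal{K}(\Omega;I_p)\lesssim_\gamma p$ on $\mathcal{B}_{\mathrm{op}}(\gamma)$.

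For the lower bound, however, you take a genuinely different route. The paper does \emph{not} use Assouad's hypercube; it uses a Fano-type argument through the modified Birg\'e lemma stated as Lemma~\ref{lemme_principal_minoration_minimax}. The reason is the one the paper stresses: the Kullback discrepancy is neither symmetric nor a semi-metric, so the standard Assouad (or Fano) machinery does not apply to it verbatim. The paper therefore introduces an explicit surrogate
\[
d(\Omega,\Omega')=\sum_{i\in A}\log\Bigl[1+\tfrac{1}{4}\|t_i-t_i'\|_{l_2}^2\Bigr]+\sum_{i\in A^c}\Bigl(\tfrac{s_i}{s_i'}+\log\tfrac{s_i}{s_i'}-1\Bigr)
\]
and proves once and for all that any estimator's Kullback risk dominates the minimum $d$-separation over a well-chosen packing $\Upsilon$ with eigenvalues in $[1/2,2]$; the ellipsoid lower bound then reduces to a combinatorial construction of such a packing. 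Your alternative --- sandwiching $\mathcal{K}(t_i,s_i;\hat t_i,\hat s_i)$ by a constant times $\|t_i-\hat t_i\|^2$ under the uniform conditioning $\|T-I_p\|\lesssim k\delta\lesssim1$, and then running Assouad on the resulting squared distance --- is legitimate, but be aware that it is not automatic: you must still show that the estimator's \emph{unconstrained} choice of $\hat s_i$ cannot kill the $l_i(t_i,\hat t_i)/\hat s_i$ term, which amounts to minimizing $\log x+1/x-1+c\delta^2/x$ over $x>0$ and recovering an order-$\delta^2$ lower bound. The paper packages exactly this difficulty into Lemma~\ref{lemme_principal_minoration_minimax}. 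What your approach buys is transparency for this single statement; what the paper's buys is a reusable tool for all the minimax bounds in the paper (Proposition~\ref{corolaire_adaptation} and the Frobenius lower bounds of Corollaries~\ref{minimax_ordonnee_Frobenius} and~\ref{risque_Frobenius_complete}).
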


\begin{remark}
 The minimax rates of convergence over $\mathcal{E}(a,R,p)$ in the lower bound
(\ref{minoration_ellipsoide_equation}) is similar to the one obtained for
classical ellipsoids in the Gaussian fixed design regression setting (see for
instance \cite{massartflour} Th. 4.9). We conclude from the second result that
our estimator $\widetilde{\Omega}^d_{\text{ord}}$ is minimax adaptive to the
ellipsoids that are not degenerate (i.e. $R^2\geq 1/n$) and whose rates $(a_i)$
does not converge too slowly towards zero (i.e. $a_{\lfloor \sqrt{n}\rfloor
\wedge p}^2\leq (R^2\sqrt{n})^{-1}$). Note that all the sequences $(a_i)$ such
that $a_i^2\leq R^2/i$ satisfy the last assumption. 
\end{remark}

\begin{remark}
 
 However, the estimator $\widetilde{\Omega}^d_{\text{ord}}$ is not adaptive to
the parameter $\gamma$ since the constant $L$ in
(\ref{adaptation_ellipsoide_equation}) depends on $\gamma$. This is not really
surprising. Indeed,  the oracle inequality (\ref{oracle_ordonne}) is expressed
in terms of the Kullback loss while the ellipsoids are defined in terms of the
entries of $T$. If we would have considered the minimax rates of estimation over
sets analogous to $\mathcal{E}(a,R,p)$ but defined in terms of the decay of the
Kullback bias, then we would have obtained  minimax adaptiveness without any
condition on the eigenvalues. 

\end{remark}

 We are also able to prove asymptotic rates of convergence and asymptotic
minimax properties with respect to  the Frobenius loss function. For any $s>0$,
we define the ellipsoid $\mathcal{E}'(s,p,R)$ as the ellipsoid
$\mathcal{E}(a,R,p)$ with the sequence $(a_i)_{1\leq i \leq p-1}:= i^{-s}$.

\begin{cor}\label{minimax_ordonnee_Frobenius} 
If $\sum_{i=1}^{p_n}k_i+p_n=o(n)$ and $k:=1\vee \max_{1\leq i\leq p}k_i$ is
smaller than $\sqrt{n}$ then 
uniformly over the set
$\mathcal{U}_{\text{\emph{ord}}}[(k_1,\ldots,k_{p_n}),+\infty]\cap\mathcal{B}_{
\text{op}}(\gamma)$, 
\begin{eqnarray}\label{majoration_Frobenius_ordonnee}
\|\Omega-\widetilde{\Omega}^d_{\text{\emph{ord}}}\|_F^2=
\mathcal{O}_P\left(\frac{\sum_{i=1}^{p_n}k_i+p_n}{n}\right) 
\end{eqnarray}
If $s>1/2$, then uniformly over the set 
$\mathcal{E}'(s,R,p_n)\cap\mathcal{B}_{op}(\gamma)$, the estimator
$\widetilde{\Omega}^d_{\text{\emph{ord}}}$ satisfies
\begin{eqnarray}\label{majoration_Frobenius_ellipsoide}
 \|\Omega-\widetilde{\Omega}^d_{\text{\emph{ord}}}\|_F^2=\mathcal{O}_P\left[
p_n\left(\left(\frac{R}{n^s}\right)^{\frac{2}{2s+1}}\wedge
\frac{p_n}{n}\right)\right]\ .
\end{eqnarray}
Moreover, these two rates are optimal from a minimax point of view.
\end{cor}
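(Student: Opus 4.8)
The plan is to derive both convergence rates from the Kullback oracle inequality of Corollary~\ref{cor_oracle_ordonne}, combined with the exact bias--variance identity of Corollary~\ref{corollaire_risque}, and then to transfer the resulting Kullback bounds to the Frobenius loss by means of the spectral constraint $\mathcal{B}_{\text{op}}(\gamma)$. Two reductions make this transfer routine. First, the Kullback bias simplifies term by term: writing $s_{i,m_i}$ for the residual variance of $X_i$ regressed on $X_{m_i}$ and $s_i=\var(X_i|X_{<i})$, one checks that $\mathcal{K}(\Omega;\Omega_m)=\tfrac12\sum_{i=1}^p\log(s_{i,m_i}/s_i)$; since $s_{i,m_i}-s_i$ is the excess prediction error of an in-band regressor, bounding it by the error of the truncated coefficient vector and invoking $\mathcal{B}_{\text{op}}(\gamma)$ yields $\mathcal{K}(\Omega;\Omega_m)\leq L_\gamma\sum_{i=1}^p\sum_{j>|m_i|}T[i,i-j]^2$ as soon as the tails are small. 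Second, a second-order expansion of the Gaussian Kullback divergence shows that $\mathcal{K}(\Omega;\Omega')$ is equivalent to $\|\Omega-\Omega'\|_F^2$ up to factors depending only on $\gamma$, provided both $\Omega$ and $\Omega'$ have spectrum in a fixed compact subset of $(0,\infty)$.

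For the exactly banded class (first assertion) I would feed the oracle inequality the single model $m$ with $|m_i|=k_i$. For every $\Omega\in\mathcal{U}_{\text{ord}}[(k_1,\ldots,k_{p_n}),+\infty]$ this model is unbiased, so $\Omega_m=\Omega$ and Corollary~\ref{corollaire_risque} leaves only the variance $\sum_{i=1}^{p_n}R_{n,k_i}$; by Proposition~\ref{prte_basic_kullback_mle} and $k_i\leq k<\sqrt{n}$ this is of order $(\sum_i k_i+p_n)/n$, while $\tau_n$ is negligible. For the ellipsoid class (second assertion) I would use the uniform-bandwidth model $|m_i|=k$: the ellipsoid inequality~(\ref{definition_ellipsoide_equation}) with $a_j=j^{-s}$ gives the tail bound $\sum_{j>k}T[i,i-j]^2\leq R^2 k^{-2s}$, hence via the first reduction a total bias $\leq L_\gamma\, p_n R^2 k^{-2s}$ against a variance of order $p_n k/n$. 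Optimising this trade-off at $k^*\asymp(R^2 n)^{1/(2s+1)}$ produces the rate $p_n(R/n^s)^{2/(2s+1)}$, while taking instead the full model $|m_i|=i-1$ (no bias, variance of order $p_n^2/n$) produces the competing rate $p_n\cdot p_n/n$; the minimum of the two is exactly~(\ref{majoration_Frobenius_ellipsoide}). The hypothesis $s>1/2$ enters here only to guarantee that $k^*$ lies within the admissible range $\leq\sqrt n$ as $n\to\infty$.

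To upgrade these expected-Kullback bounds into the $\mathcal{O}_P$ Frobenius statements, I would first establish that, on an event of probability tending to one uniformly over the class, the spectrum of $\widetilde{\Omega}^d_{\text{ord}}$ stays in a fixed compact set: being the maximum-likelihood estimator on the selected band, $\widetilde{\Omega}^d_{\text{ord}}$ concentrates around $\Omega_{\widehat m}$, which is close to $\Omega\in\mathcal{B}_{\text{op}}(\gamma)$, so its eigenvalues are controlled. On this event the Kullback--Frobenius equivalence of the first paragraph applies, and combining it with the expected-Kullback rates and Markov's inequality delivers the two announced $\mathcal{O}_P$ bounds, with constants depending only on $K,\eta,\gamma$ (and $s,R$).

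Optimality then follows from the lower bounds already at hand. For the ellipsoid, Proposition~\ref{minoration_ellipsoide} gives the Kullback lower bound $Lp\sup_k(R^2 a_k^2\wedge(k+1)/n)$, which at $a_k=k^{-s}$ balances at the same $k^*$ and has order $(R/n^s)^{2/(2s+1)}$; because the perturbations used in that construction remain inside $\mathcal{B}_{\text{op}}(\gamma)$, the equivalence carries the bound over to the Frobenius loss, matching~(\ref{majoration_Frobenius_ellipsoide}). For the banded class the analogous parametric lower bound of order $(\sum_i k_i+p_n)/n$ is obtained by the same Assouad/Fano-type argument restricted to the free entries of $T$ and $S$. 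The main obstacle is the spectral-control step of the third paragraph: the oracle inequality only bounds the Kullback loss in expectation and says nothing a priori about the eigenvalues of the selected-model estimator, so showing that they stay bounded uniformly over the class, and hence that the Kullback--Frobenius equivalence may legitimately be invoked, is where the real work lies.
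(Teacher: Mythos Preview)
Your overall architecture matches the paper's: derive Kullback rates from the oracle inequality (Corollary~\ref{cor_oracle_ordonne}) together with the bias--variance decomposition (Corollary~\ref{corollaire_risque}), then pass to the Frobenius loss. Your bias and variance computations for both classes, and the bandwidth optimisation, are correct.

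Where you diverge is the Kullback-to-Frobenius transfer. You treat spectral control of $\widetilde{\Omega}^d_{\text{ord}}$ as ``the main obstacle'' and propose to show it concentrates around $\Omega_{\widehat m}$, which in turn is close to $\Omega$. This sketch is shaky --- $\widehat m$ is random, and asserting that $\Omega_{\widehat m}$ is close to $\Omega$ is essentially the conclusion you are after --- and, more to the point, the paper bypasses the issue entirely. Writing
\[
\|\Omega'-\Omega\|_F^2 \;=\; \mathrm{tr}\!\left[\sqrt{\Omega}\,\bigl(\sqrt{\Sigma}\,\Omega'\sqrt{\Sigma}-I_p\bigr)\,\Omega\,\bigl(\sqrt{\Sigma}\,\Omega'\sqrt{\Sigma}-I_p\bigr)\sqrt{\Omega}\right] \;\leq\; \varphi_{\max}^2(\Omega)\,\bigl\|\sqrt{\Sigma}\,\Omega'\sqrt{\Sigma}-I_p\bigr\|_F^2\,,
\]
only $\varphi_{\max}(\Omega)\leq\gamma$ enters, and that is controlled by hypothesis. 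With $\lambda_j$ the eigenvalues of $\sqrt{\Sigma}\,\Omega'\sqrt{\Sigma}$, one has $\|\sqrt{\Sigma}\,\Omega'\sqrt{\Sigma}-I_p\|_F^2=\sum_j(\lambda_j-1)^2$ and $2\mathcal{K}(\Omega;\Omega')=\sum_j(\lambda_j-1-\log\lambda_j)$; once the Kullback loss is small each $\lambda_j$ is automatically forced near $1$, whence $(\lambda_j-1)^2\leq(2+o(1))(\lambda_j-1-\log\lambda_j)$. Thus $\|\sqrt{\Sigma}\,\Omega'\sqrt{\Sigma}-I_p\|_F^2=4\mathcal{K}(\Omega;\Omega')+o(\mathcal{K})$ comes for free from the Kullback bound itself, and the separate spectral-concentration argument you flag as the crux is unnecessary.
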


The estimator $\widetilde{\Omega}_{\text{ord}}^d$ achieves the minimax rates of
estimation over special cases of ellipsoids. However, all these results depend
on $\gamma$ and are of \emph{asymptotic} nature.

\section{Complete graph selection}\label{section_complete}

We now turn to the complete Cholesky factor estimation problem. First, we adapt
the model selection procedure ChoSelect to this  setting. Then, we derive an
oracle inequality for the Kullback loss. Afterwards, we state that the procedure
is minimax adaptive  to the unknown sparsity both with respect to the Kullback
entropy and the Frobenius norm. Finally, we discuss the computational complexity
and we introduce a faster two-stage procedure.

\subsection{Oracle inequalities} \label{section_oracle_complet}

Again, $d$ is a positive integer that stands for the maximal size of the models
$m_i$. We consider the collections of models  $\mathcal{M}_{i,\text{co}}^d$ that
contain all the subsets of $\{1,\ldots,i-1\}$ of size smaller or equal to $d$. A
model $m\in \mathcal{M}_{\text{co}}^d$ corresponds to a pattern of zero in the
Cholesky factors $T$. As explained in Section \ref{section_preliminaire}, such a
model $m$ is also in correspondence with an ordered graph $\overrightarrow{G}$
which is compatible with the ordering. Hence, the collection
$\mathcal{M}_{\text{co}}^d$ is in correspondence with the set of ordered graphs
$\overrightarrow{G}$ of degree smaller than $d$ which are compatible with the
natural ordering of $\{1,\ldots,p\}$.

For any $2\leq i\leq p$ and any model $m_i$ in $\mathcal{M}_{i,\text{co}}^d$ we
fix the penalty  
\begin{eqnarray}\label{penalite_complete_log}
pen_i(m_i)=\log\left[1+K\frac{|m_i|}{n-|m_i|}\left\{1+\sqrt{2\left[
1+\log\left(\frac{i-1}{|m_i|}\right)\right]}\right\}^2 \right]\ ,
\end{eqnarray}
where $K>1$. In the sequel, $\widetilde{\Omega}^d_{\text{co}}$ corresponds to
the estimator ChoSelect with the collection $\mathcal{M}^d_{\text{co}}$ and the
penalty (\ref{penalite_complete_log}).

\begin{cor}\label{cor_oracle_complet}
Let $K>1$ and $\eta<\eta'(K)$ (defined in the proof). Assume that 
\begin{eqnarray}\label{dimension_complete}
 d\leq \eta \frac{n}{1+[\log(p/d)\vee 0]} \ .
\end{eqnarray}
If $n$ is larger than some quantity $n_0(K)$, then
$\widetilde{\Omega}^d_{\text{co}}$ satisfies 
\begin{eqnarray}
 \mathbb{E}\left[\mathcal{K}\left(\Omega;\widetilde{\Omega}^d_{\text{\emph{co}}}
\right)\right]& \leq & 
L_{K,\eta}\inf_{m\in\mathcal{M}^d_{\text{\emph{co}}}}\left\{\mathcal{K}
\left(\Omega;\Omega_m\right)+\sum_{i=2}^p
\frac{|m_i|}{n-|m_i|}\left[1+\log\left(\frac{i-1}{|m_i|}\right)\right]+\frac{p}{
n}\right\} \nonumber\\ 
& + & \tau'_n\ ,\label{oracle_complet}
\end{eqnarray}
where the remaining term $\tau'_n$ is of the same order as $\tau_n$ in Theorem
\ref{main_thrm}.
\end{cor}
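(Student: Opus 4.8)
The plan is to deduce Corollary~\ref{cor_oracle_complet} directly from Theorem~\ref{main_thrm}, after two preliminary steps: computing the complexity functions $H_i$ for the complete collection $\mathcal{M}^d_{\text{co}}$, and reconciling the logarithmic penalty (\ref{penalite_complete_log}) with the linear lower bound (\ref{penalite_complexe}) required by the theorem. First I would compute $H_i$. Since $\mathcal{M}_{i,\text{co}}^d$ contains every subset of $\{1,\ldots,i-1\}$ of size $d$, there are $\binom{i-1}{d}$ of them, so $H_i(d)=\frac{1}{d}\log\binom{i-1}{d}$. The standard estimate $\binom{i-1}{d}\leq (e(i-1)/d)^d$ then gives $H_i(d)\leq 1+\log\big((i-1)/d\big)$ for all $1\leq d\leq i-1$, with $H_i(0)=0$. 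This is exactly the quantity appearing inside the penalty (\ref{penalite_complete_log}).

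Next I would verify Assumption $(\mathbb{H}_{K',\eta})$ for a suitable constant $K'>1$. Set $u_i:=\frac{|m_i|}{n-|m_i|}\big(1+\sqrt{2[1+\log((i-1)/|m_i|)]}\big)^2$. Using $(1+\sqrt{2a})^2\leq L a$ for $a\geq 1$ (with $a=1+\log((i-1)/|m_i|)$) yields $u_i\leq L\frac{|m_i|}{n-|m_i|}[1+\log((i-1)/|m_i|)]$. The dimension constraint (\ref{dimension_complete}) precisely guarantees $|m_i|[1+\log(p/|m_i|)]\leq \eta n$; combined with $|m_i|\leq d\leq n/2$ this gives $u_i\leq L'\eta$, a bounded quantity. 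Since $H_i(|m_i|)\leq 1+\log((i-1)/|m_i|)$, the left-hand side of (\ref{Hypothese_HK}) is at most $u_i\leq L'\eta$, so $(\mathbb{H}_{K',\eta})$ holds as soon as $\eta$ is chosen below $\eta(K')$. (Here one uses the monotonicity of $d'\mapsto d'[1+\log(p/d')]/(n-d')$ up to $d$ to reduce the suprema over $i$ and over $|m_i|\leq d$ to the boundary regime controlled by (\ref{dimension_complete}).)

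The crux is the third step: the penalty (\ref{penalite_complete_log}) has the form $pen_i(m_i)=\log(1+x_i)$ with $x_i:=Ku_i$, whereas Theorem~\ref{main_thrm} requires $pen_i(m_i)\geq K'\frac{|m_i|}{n-|m_i|}(1+\sqrt{2H_i})^2=:K'w_i$. On the one hand, $\log(1+x)\leq x$ gives $pen_i(m_i)\leq x_i=Ku_i\leq LK\frac{|m_i|}{n-|m_i|}[1+\log((i-1)/|m_i|)]$, which is precisely what controls the penalty term on the right-hand side of (\ref{oracle_complet}). On the other hand, $H_i\leq 1+\log((i-1)/|m_i|)$ gives $x_i\geq Kw_i$, and since $x_i$ is bounded by some $M=M(K,\eta)$ by the previous step, the concavity bound $\log(1+x)\geq \frac{\log(1+M)}{M}x$ on $[0,M]$ yields $pen_i(m_i)\geq \frac{\log(1+M)}{M}x_i\geq K'w_i$ with $K':=K\frac{\log(1+M)}{M}$. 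Because $M=M(K,\eta)\to 0$ as $\eta\to 0$, one has $K'\to K>1$; this is where I would define $\eta'(K)$ as the largest threshold for which both $K'>1$ and $\eta<\eta(K')$ hold. With these two constants in hand, Theorem~\ref{main_thrm} applies and delivers (\ref{inegalite_oracle}); substituting the above upper bound on $pen(m)=\sum_i pen_i(m_i)$ produces the stated inequality (\ref{oracle_complet}), with $\tau'_n$ of the same order as $\tau_n$.

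The main obstacle is exactly this simultaneous calibration: the logarithmic penalty must be squeezed between a linear upper bound (to keep the oracle term of the stated form) and a linear lower bound whose effective constant $K'$ still exceeds one (to invoke the theorem), and one must check that the threshold $\eta'(K)$ ensuring $K'>1$ is compatible with the requirement $\eta<\eta(K')$ of Assumption $(\mathbb{H}_{K',\eta})$. Tracking how the bound $M$ on $x_i$ depends on $K$ and $\eta$, and thereby pinning down $\eta'(K)$, is the only genuinely delicate part; everything else is a routine substitution into Theorem~\ref{main_thrm}.
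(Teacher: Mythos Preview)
Your proposal is correct and follows essentially the same route as the paper's own proof: bound the complexity $H_i(|m_i|)$ by $1+\log((i-1)/|m_i|)$ via the binomial estimate, use the dimension constraint (\ref{dimension_complete}) to force the quantity $u_i=K\frac{|m_i|}{n-|m_i|}(1+\sqrt{2[1+\log((i-1)/|m_i|)]})^2$ to stay in a bounded interval $[0,M(K,\eta)]$, and then exploit that bound to squeeze $\log(1+u_i)$ between $u_i$ (above) and $cu_i$ with $c=c(K,\eta)\in(0,1)$ (below), so that Theorem~\ref{main_thrm} applies with an effective constant $K'>1$. Your explicit concavity argument $\log(1+x)\geq \frac{\log(1+M)}{M}x$ on $[0,M]$ is exactly the mechanism the paper alludes to when it says the logarithmic penalty is ``lower bounded by penalties defined in (\ref{penalite_complexe}) with some $K'>1$'' once $u_i\leq \nu'(K)$; the definition of $\eta'(K)$ then arises, as you note, from the requirement that $K'>1$ be compatible with $(\mathbb{H}_{K',\eta})$.
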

A proof is provided in Section \ref{section_preuve_oracle}.
We get an oracle inequality up to logarithms factors,  but we prove in Section
\ref{section_minimax_complete} that these terms $\log[(i-1)/|m_i|]$ are in fact
unavoidable.
For the sake of clarity, we straightforwardly derive from (\ref{oracle_complet})
the less sharp but more readable upper bound 
$$\mathbb{E}\left[\mathcal{K}\left(\Omega;\widetilde{\Omega}^d_{\text{co}}
\right)\right] \leq  
L_{K,\eta}\inf_{m\in\mathcal{M}_{\text{co}}^d}\left\{\mathcal{K}
\left(\Omega;\Omega_m\right)+\frac{p+|m|\log
p}{n}\right\}+\tau_n\left(\Omega,K,\eta,n,p\right)\ , $$
where $|m|:= \sum_{i=1}^p|m_i|$. 

\begin{remark}
 As for the previous results, we do not perform any assumption on the target
$\Omega$ and the obtained upper bound is non-asymptotic. By Condition
(\ref{dimension_complete}), we can consider dimension $d$ up to the order
$n/[\log(p/n)\vee 1]$. If $p$ is much larger than $n$, the maximal dimension has
to be smaller than the order $n/\log(p)$. This is not really surprising since it
is also the case for linear regression with Gaussian design as stated in
\cite{verzelen_regression} Sect. 3.2. There is no precise results that proves
that this $n/\log(p)$ bound is optimal but we believe that it is unimprovable.
If $p$ is of the same order as $n$, it is possible to consider dimensions up to
the same order as $p$.
\end{remark}

\begin{remark}
The same bound (\ref{oracle_complet}) holds if we use the penalty
$$pen'_i(m_i)=K\frac{|m_i|}{n-|m_i|}\left\{1+\sqrt{2\left[1+\log\left(\frac{i-1}
{|m_i|}\right)\right]}\right\}^2\ .$$
For a given $K$, observe that $pen_i(m_i)=\log(1+pen'_i(m_i))$. Hence, these two
penalties are equivalent when $n$ is large. In Corollary
\ref{cor_oracle_complet}, we have privileged a logarithmic penalty, because this
penalty gives slightly better results in practice. 
\end{remark}

{\it Choice of $K$ and $d$}.
 In practice, we set the maximal dimension to $n/\{2.5[2+(\log(p/n)\vee 0)]\}$.
Concerning the choice of $K$, we advise to use the value $1.1$, if the goal is 
to minimize risk. When the goal is to estimate the underlying graph, one should
use a larger value of $K$ like $2.5$ in order to decrease the proportion of
falsely discovered vertices.

\subsection{Adaptiveness to unknown sparsity}\label{section_minimax_complete}

In this section, we state that the estimator $\widetilde{\Omega}_{\text{co}}^d$
achieves simultaneously the minimax rates of estimation for sparsity of the
matrix $T$. In the sequel, $\mathcal{U}_1[k,p]$ stands for the set of positive
square matrices $\Omega=T^*S^{-1}T$ of size $p$ such that its Cholesky factor
$T$ contains at most $k$ non-zero off-diagonal coefficients on each line. The
set $\mathcal{U}_1[k,p]$ contains the precision matrices of  the directed
Gaussian graphical models whose underlying directed acyclic graph
$\overrightarrow{\mathcal{G}}$ satisfies the two following properties:
\begin{itemize}
 \item It is compatible with the ordering on the variables. 
\item Each node of $\overrightarrow{\mathcal{G}}$ has at most $k$ parents.
\end{itemize}
We shall also consider the set $\mathcal{U}_2[k,p]$ that contains positive
square matrices whose whose Cholesky factor is $k$-sparse (i.e. contains at most
$k$ non-zero elements). Hence, the set  $\mathcal{U}_2[k,p]$ corresponds to the
precision matrices of the directed Gaussian graphical models whose underlying
directed acyclic graph $\overrightarrow{\mathcal{G}}$ is compatible with the
ordering on the variables and has at most $k$ edges. When $\Omega$ belongs to
$\mathcal{U}_2[k,p]$ with $k$ ``small'', we say that the underlying Cholesky
factors $T$ are ultra-sparse.

For deriving the minimax rates of estimation, we shall restrict ourselves to
precision matrices whose Kullback divergence with the identity is not too large.
This is why we define
\begin{eqnarray*}
 \mathcal{B}_{\mathcal{K}}(r):=\left\{\Omega\,\text{ s.t.
}\mathcal{K}(\Omega;I_p)\leq pr \right\}\ ,
\end{eqnarray*}
 for any positive number $r>0$.

\begin{prte}\label{corolaire_adaptation}
Let $k$ and $p$ be two positive integers such that $k\leq p$.  The minimax rates
of estimation over the sets $\mathcal{U}_1[k,p]$ and $\mathcal{U}_2[k,p]$ are
lower bounded as follows
\begin{eqnarray}\label{minoration_kullback_selection_complete}
 \inf_{\widehat{\Omega}}\sup_{\Omega\in
\mathcal{U}_1[k,p]}\mathbb{E}_{\Omega}\left[\mathcal{K}\left(\Omega;\widehat{
\Omega}\right)\right]&\geq& Lkp\frac{1+\log\left(p/k\right)}{n} \ , \quad \text{
if } n\geq Lk^2[1+\log(p/k)]\ , \\
\label{minoration_selection_complete_ultra_sparse}
 \inf_{\widehat{\Omega}}\sup_{\Omega\in
\mathcal{U}_2[k,p]}\mathbb{E}_{\Omega}\left[\mathcal{K}\left(\Omega;\widehat{
\Omega}\right)\right]&\geq& L\frac{p+k\log(p)}{n} \ ,\quad \text{ if } k\leq p.
\end{eqnarray}

 Consider $K>1$, $\beta>1$, and $\eta<\eta(K)$. Assume that $n\geq n_0(K)$ and
choose  a positive integer $d$ that satisfies Condition
(\ref{dimension_complete}). 
The penalized estimator $\widetilde{\Omega}^d_{\text{co}}$ defined in Corollary
\ref{cor_oracle_complet} is minimax adaptive over the sets
$\mathcal{U}_1[k,p]\cap\mathcal{B}_{\mathcal{K}}(n^\beta)$ for all $k$ smaller
than $d$  that also satisfy $n\geq Lk^2(1+\log(p/k))$. It is also minimax
adaptive over $\mathcal{U}_2[k,p]\cap\mathcal{B}_{\mathcal{K}}(n^\beta)$ for all
$k$ less than $d$:
\begin{eqnarray*}
\sup_{\Omega\in
\mathcal{U}_1[k,p]\cap\mathcal{B}_{\mathcal{K}}(n^\beta)}\mathbb{E}_{\Omega}
\left[\mathcal{K}\left(\Omega;\widetilde{\Omega}_{\text{co}}^d\right)\right]\leq
L_{K,\beta,\eta} \inf_{\widehat{\Omega}}\sup_{\Omega\in
\mathcal{U}_1[k,p]\cap\mathcal{B}_{\mathcal{K}}(n^\beta)}\mathbb{E}_{\Omega}
\left[\mathcal{K}\left(\Omega;\widehat{\Omega}\right)\right]\ ,\\
\sup_{\Omega\in
\mathcal{U}_2[k,p]\cap\mathcal{B}_{\mathcal{K}}(n^\beta)}\mathbb{E}_{\Omega}
\left[\mathcal{K}\left(\Omega;\widetilde{\Omega}_{\text{co}}^d\right)\right]\leq
L_{K,\beta,\eta} \inf_{\widehat{\Omega}}\sup_{\Omega\in
\mathcal{U}_2[k,p]\cap\mathcal{B}_{\mathcal{K}}(n^\beta)}\mathbb{E}_{\Omega}
\left[\mathcal{K}\left(\Omega;\widehat{\Omega}\right)\right]\ .
\end{eqnarray*}

\end{prte}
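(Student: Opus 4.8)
The statement splits into two tasks: establishing the minimax lower bounds (\ref{minoration_kullback_selection_complete}) and (\ref{minoration_selection_complete_ultra_sparse}), and verifying that $\widetilde{\Omega}^d_{\text{co}}$ attains them over the intersections with $\mathcal{B}_{\mathcal{K}}(n^\beta)$. The plan is to dispatch the upper bounds first, since they follow almost mechanically from Corollary \ref{cor_oracle_complet}, and then to concentrate on the lower bounds, which are the genuinely new part.

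For the adaptiveness over $\mathcal{U}_1[k,p]$ I would feed into the oracle inequality (\ref{oracle_complet}) the model $m^\star$ equal to the exact support of the true Cholesky factor $T$. Since $\Omega\in\mathcal{U}_1[k,p]$ has at most $k\leq d$ non-zero off-diagonal entries per row, $m^\star$ belongs to $\mathcal{M}^d_{\text{co}}$ and $\Omega_{m^\star}=\Omega$, so the bias term $\mathcal{K}(\Omega;\Omega_{m^\star})$ vanishes. The remaining variance term is bounded by $\sum_{i=2}^p \frac{k}{n-k}\left[1+\log((i-1)/k)\right]$, which sums to order $\frac{kp(1+\log(p/k))}{n}$; together with the $p/n$ term this reproduces (\ref{minoration_kullback_selection_complete}). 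For $\mathcal{U}_2[k,p]$ the same choice of $m^\star$ gives $\sum_i |m^\star_i|\left[1+\log((i-1)/|m^\star_i|)\right]\leq k(1+\log p)$, because $\sum_i|m^\star_i|\leq k$ and each logarithm is at most $\log p$; adding $p/n$ yields the rate $(p+k\log p)/n$ of (\ref{minoration_selection_complete_ultra_sparse}). In both cases the residual $\tau'_n$ is controlled by the restriction to $\mathcal{B}_{\mathcal{K}}(n^\beta)$: there $\mathcal{K}(\Omega;I_p)\leq pn^\beta$, so $\tau'_n\leq L n^{5/2}pn^\beta\exp(-nL_2)$ is negligible relative to the polynomial main term (which is at least of order $p/n$). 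This is precisely why the ball $\mathcal{B}_{\mathcal{K}}(n^\beta)$ enters the statement.

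For the lower bounds I would rely on Fano's inequality applied to a well-chosen finite family of hypotheses. Because the Kullback divergence decomposes across rows via the chain rule, the class $\mathcal{U}_1[k,p]$ is essentially a product of $p$ near-independent sparse Gaussian-design regressions, the $i$-th having $i-1$ potential predictors and sparsity $k$. For each row I would construct, by a Varshamov--Gilbert argument, an exponential number of $k$-sparse supports pairwise separated in Hamming distance, attach to each a coefficient vector of common small magnitude $\rho$, and take the ambient covariance close to $I_p$ so the designs behave as independent standard Gaussians. Tuning $\rho^2\asymp \log((i-1)/k)/n$ makes each pairwise Kullback divergence comparable to the log-cardinality of the packing, and Fano then yields a per-row lower bound of order $k\log((i-1)/k)/n$; summing over $i$ produces $\frac{kp(1+\log(p/k))}{n}$. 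The condition $n\geq Lk^2(1+\log(p/k))$ is exactly what is needed to keep $\rho$ small enough that the constructed matrices remain in $\mathcal{U}_1[k,p]$, stay well conditioned, and have their chain-rule Kullback divergence behave like a sum of one-dimensional Gaussian divergences.

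For $\mathcal{U}_2[k,p]$ the construction is global rather than row-wise: the $k$ non-zero entries may sit anywhere among the $\Theta(p^2)$ off-diagonal positions of the lower-triangular factor. A Varshamov--Gilbert packing over $k$-subsets of these positions has log-cardinality of order $k\log(p^2/k)\asymp k\log p$, which after the same calibration of the perturbation magnitude delivers the $k\log(p)/n$ part of (\ref{minoration_selection_complete_ultra_sparse}); the additive $p/n$ part is obtained separately by a two-point (or Assouad-type) argument on the $p$ diagonal entries $s_i$, for which even the null graph must be estimated. The hard part throughout is this lower-bound side: one must simultaneously guarantee that every hypothesis lies in the prescribed class, that the precision matrices stay uniformly invertible so the Kullback distances are genuinely equivalent to squared parameter distances, and that the packing is large enough to force the logarithmic factor---this last point being exactly what shows the $\log((i-1)/|m_i|)$ terms appearing in Corollary \ref{cor_oracle_complet} are unavoidable.
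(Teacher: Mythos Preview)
Your proposal is correct and follows essentially the same route as the paper: the upper bounds come exactly as you describe from plugging the true support into the oracle inequality (\ref{oracle_complet}) and using $\mathcal{B}_{\mathcal{K}}(n^\beta)$ to kill $\tau'_n$, while the lower bounds are obtained via Fano's method with Varshamov--Gilbert packings of sparse Cholesky factors kept close to the identity. The only packaging difference is that, since the Kullback divergence is not a metric, the paper first isolates a tailored Birg\'e-type lemma (Lemma \ref{lemme_principal_minoration_minimax}) whose comparison functional $d(\Omega,\Omega')$ splits the rows into a set $A$ carrying the $t_i$ perturbations and a complement carrying the $s_i$ perturbations, so that the $k\log(p/k)$ part and the $p/n$ part emerge from a single application rather than from your separate Assouad step.
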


\begin{remark}
The minimax rates of estimation over $\mathcal{U}_1[k,p]$ is of order $kp[1+\log
\left(p/k\right)]/n$. We do not think that the condition $n\geq
Lk^2[1+\log(p/k)]$ is necessary but we do not know how to remove it.
The technical condition $\mathcal{K}\left(\Omega;I_p\right)\leq pn^\beta$ is not
really restrictive. It comes from the term
$n^{5/2}\mathcal{K}(\Omega;I_p)\exp\left[-nL_{K,\eta}\right]$ in Theorem
\ref{main_thrm} which goes exponentially fast to $0$ with $n$ as long as
$\mathcal{K}(\Omega,I_p)/p$ is grows polynomially with respect to $n$. 
In conclusion, our estimator $\widetilde{\Omega}^d_{\text{co}}$ is adaptive to
the sparsity of its Cholesky factor $T$. 
\end{remark}

\begin{remark}
Let us translate the proposition in terms of directed graphical models.
The Kullback minimax rate of covariance estimation over graphical models with at
most $k$ parents by node is of the order $pk(1+\log (p/k))/n$. Moreover, 
the Kullback minimax rate of covariance estimation over graphical models with at
most $k$ vertices  is of the order $(p+k\log p)/n$. Finally,
$\widetilde{\Omega}^d_{\text{co}}$ is minimax adaptive for estimating the
distribution of a sparse directed Gaussian graphical model whose underlying
graph is unknown.  
 \end{remark}

We can also consider the rates of convergence with respect to the Frobenius norm
or the operator norm in the spirit of the results of Lam and
Fan~\cite{fan_covariance}. We recall that $\|.\|_F$ and $\|.\|$ respectively
refer to the Frobenius norm and the operator norm in the space of matrices. We
also recall that the set $\mathcal{B}_{\text{op}}(\gamma)$ is defined in
(\ref{definition_gamma}).
\begin{cor}\label{risque_Frobenius_complete}
Let $K>1$, $\eta<\eta(K)$, $\gamma>2$, and let $d$ be the largest integer that
satisfies (\ref{dimension_complete}).  If $p_nk_n[1+\log(p_n/k_n)]=o(n)$, then 
\begin{eqnarray}\label{vitesse_frobenius_complete}
\|\Omega-\widetilde{\Omega}^d_{\text{\emph{co}}}\|_F^2
&=&\mathcal{O}_P\left(k_n\left[1+\log\left(\frac{p_n}{k_n}\right)\right]\frac{
p_n } {n}\right) \ , \\ \nonumber
\|\Omega-\widetilde{\Omega}^d_{\text{\emph{co}}}\|
& =&\mathcal{O}_P\left(\sqrt{k_n\left[1+\log\left(\frac{p_n}{k_n}\right)\right]
\frac{p_n}{n}}\right) \ ,
\end{eqnarray}
 uniformly on $\mathcal{U}_1[k_n,p_n]\cap\mathcal{B}_{\text{op}}[\gamma]$.  If
$p_n+k_n\log(p_n)=o(n)$, then 
\begin{eqnarray}\label{vitesse_frobenius}
\|\Omega-\widetilde{\Omega}^d_{\text{\emph{co}}}\|_F^2
& =& \mathcal{O}_P\left(\frac{p_n+k_n\log(p_n)}{n}\right)\ ,\\ \nonumber
 \|\Omega-\widetilde{\Omega}^d_{\text{\emph{co}}}\|&
=& \mathcal{O}_P\left(\sqrt{\frac{p_n+k_n\log(p_n)}{n}}\right) \ ,
\end{eqnarray}
uniformly on $\mathcal{U}_2[k_n,p_n]\cap\mathcal{B}_{\text{op}}[\gamma]$.
Moreover, all these Frobenius rates of convergence are optimal from a minimax
point of view.
\end{cor}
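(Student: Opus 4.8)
The plan is to deduce the stated Frobenius and operator-norm rates from the Kullback oracle inequality of Corollary~\ref{cor_oracle_complet} together with the fact that, on $\mathcal{B}_{\text{op}}(\gamma)$, the Kullback divergence is equivalent, up to constants depending on $\gamma$, to the squared Frobenius norm. First I would control the Kullback risk. Fix $\Omega\in\mathcal{U}_1[k_n,p_n]\cap\mathcal{B}_{\text{op}}(\gamma)$. Since the Cholesky factor $T$ of $\Omega$ has at most $k_n$ nonzero off-diagonal entries per line and $k_n\leq d$ for $n$ large (this is where $p_nk_n[1+\log(p_n/k_n)]=o(n)$ is used, together with Condition~(\ref{dimension_complete})), there is a model $m^\star\in\mathcal{M}^d_{\text{co}}$ whose submodels $m_i^\star$ coincide with the supports of the $t_i$; hence $\Omega_{m^\star}=\Omega$ and the bias $\mathcal{K}(\Omega;\Omega_{m^\star})$ vanishes. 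Plugging $m^\star$ into (\ref{oracle_complet}) and bounding the resulting variance sum, using that $x\mapsto x[1+\log((i-1)/x)]$ is nondecreasing on $(0,i-1]$ and a Stirling-type estimate for $\sum_i\log((i-1)/k_n)$, yields $\mathbb{E}[\mathcal{K}(\Omega;\widetilde{\Omega}^d_{\text{co}})]\leq L_{K,\eta}\,r_n$ with $r_n:=k_n[1+\log(p_n/k_n)]p_n/n$. The same argument with a model of total size $\sum_i|m_i^\star|\leq k_n$ and the crude bound $\log(p_n/|m_i^\star|)\leq\log p_n$ gives the rate $r_n:=(p_n+k_n\log p_n)/n$ over $\mathcal{U}_2[k_n,p_n]$. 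In both cases the $o(n)$ hypotheses guarantee $r_n\to 0$ and that $\tau'_n$ is negligible.

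Next I would convert the Kullback bound into a Frobenius bound. Writing $\lambda_1,\ldots,\lambda_p$ for the eigenvalues of $M:=\Omega^{-1/2}\widetilde{\Omega}^d_{\text{co}}\Omega^{-1/2}$ (positive almost surely since $\widetilde{\Omega}^d_{\text{co}}$ is symmetric positive definite), the Gaussian Kullback formula reads $\mathcal{K}(\Omega;\widetilde{\Omega}^d_{\text{co}})=\tfrac12\sum_i g(\lambda_i)$ with $g(\lambda):=\lambda-1-\log\lambda\geq 0$. The key observation, which removes the need for a separate control of the spectrum of $\widetilde{\Omega}^d_{\text{co}}$, is that smallness of the total divergence is self-improving: on the event $\mathcal{A}:=\{\mathcal{K}(\Omega;\widetilde{\Omega}^d_{\text{co}})\leq 1\}$ each term satisfies $g(\lambda_i)\leq 2$, which confines every $\lambda_i$ to a fixed compact interval $[a,b]\subset(0,\infty)$, and on $[a,b]$ a second-order Taylor expansion of $g$ around $1$ gives $g(\lambda)\geq c(\lambda-1)^2$ for a universal $c>0$. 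Hence on $\mathcal{A}$,
\begin{eqnarray*}
\mathcal{K}(\Omega;\widetilde{\Omega}^d_{\text{co}})\geq\frac{c}{2}\sum_i(\lambda_i-1)^2=\frac{c}{2}\|M-I_p\|_F^2\geq\frac{c}{2\gamma^2}\|\Omega-\widetilde{\Omega}^d_{\text{co}}\|_F^2\ ,
\end{eqnarray*}
where the last step uses $M-I_p=\Omega^{-1/2}(\widetilde{\Omega}^d_{\text{co}}-\Omega)\Omega^{-1/2}$ and $\|\Omega^{1/2}\|^2=\varphi_{\text{max}}(\Omega)\leq\gamma$ to undo the conjugation.

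Then I would assemble the pieces. By Markov's inequality the risk bound gives $\mathcal{K}(\Omega;\widetilde{\Omega}^d_{\text{co}})=\mathcal{O}_P(r_n)$, and since $r_n\to 0$ the event $\mathcal{A}$ has probability tending to one uniformly over the class (as $\mathbb{P}(\mathcal{A}^c)\leq L_{K,\eta}r_n$). Combining with the display yields $\|\Omega-\widetilde{\Omega}^d_{\text{co}}\|_F^2=\mathcal{O}_P(r_n)$, which is (\ref{vitesse_frobenius_complete}) over $\mathcal{U}_1$ and (\ref{vitesse_frobenius}) over $\mathcal{U}_2$; the operator-norm rates follow at once from $\|\cdot\|\leq\|\cdot\|_F$. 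I would take care to track uniformity in $\Omega$ throughout, since the comparison constants $c$ and $\gamma$ and the Markov tail are uniform on the class, so the $\mathcal{O}_P$ statements are uniform. For the minimax optimality claim I would reuse the Kullback lower bounds of Proposition~\ref{corolaire_adaptation}: because the hypotheses there can be taken inside $\mathcal{B}_{\text{op}}(\gamma)$, the reverse comparison $g(\lambda)\leq C(\lambda-1)^2$ on a fixed compact spectrum turns a Kullback separation into a Frobenius separation of the same order, so the Fano/Assouad scheme transfers verbatim and delivers matching Frobenius lower bounds; the operator-norm lower bounds are obtained by choosing perturbations concentrated enough that their operator and Frobenius magnitudes are comparable. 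The main obstacle in the whole argument is the potential blow-up of the spectrum of $\widetilde{\Omega}^d_{\text{co}}$, which would break the Kullback–Frobenius equivalence; the self-improving smallness argument on $\mathcal{A}$ is precisely what resolves it without any additional spectral estimate.
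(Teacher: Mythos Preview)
Your proposal is correct and follows essentially the same route as the paper: control the Kullback risk via the oracle inequality of Corollary~\ref{cor_oracle_complet} applied to the true sparse model, then convert to Frobenius via $\|\Omega'-\Omega\|_F^2 \leq \varphi_{\text{max}}^2(\Omega)\|\sqrt{\Sigma}\Omega'\sqrt{\Sigma}-I\|_F^2$ together with the local equivalence of the Kullback divergence and the squared Frobenius norm (your self-improving compactness argument on $\{\mathcal{K}\leq 1\}$ is precisely the rigorous form of the paper's expansion~(\ref{majoration_risque_fr})). For the minimax lower bounds you rightly note that the packing constructions underlying Proposition~\ref{corolaire_adaptation} live inside a fixed spectral compact, on which Kullback and Frobenius separations are comparable, so the same Fano/Birg\'e scheme transfers directly.
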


\begin{remark}
The estimator  $\widetilde{\Omega}_{\text{co}}^d$  is  asymptotically minimax
adaptive to the sets $\mathcal{U}_1[k,p]\cap
\mathcal{B}_{\text{\emph{op}}}(\gamma)$ and $\mathcal{U}_2[k,p]\cap
\mathcal{B}_{\text{\emph{op}}}(\gamma)$ with respect to the Frobenius norm.
Moreover, these rates are coherent with the ones obtained by Lam and Fan in
Sect.4 of \cite{fan_covariance}. We do not think that the rates of convergence
with respect to the operator norm are sharp.
\end{remark}

\begin{remark}
 These results are  of asymptotic nature and require that $p_n$ has to be much
smaller than $n$. Besides, the upper bounds on the rates highly depend on the
largest eigenvalue $\varphi_{\text{\text{max}}}(\Omega)$.
This is why we have restricted ourselves to precision matrices whose eigenvalues
lie in the compact $[1/\gamma;\gamma]$. Nevertheless, to our knowledge all 
results in this setting suffer from the same  drawbacks. See for instance Th.11
of  Lam and Fan~\cite{fan_covariance}.
\end{remark}

\section{A two-step procedure}\label{section_fast_algorith}

The computational cost of $\widetilde{\Omega}_{\text{co}}^d$ is proportional to
the size of $\mathcal{M}^d_{i,\text{co}}$, which is of the order of $p^d$.
Hence, it becomes prohibitive when $p$ is larger than $50$. In fact,
$\widetilde{\Omega}_{\text{co}}^d$ minimizes a penalized criterion over the
collection $\mathcal{M}^d_{\text{co}}$. Nevertheless,  the collections
$\mathcal{M}^d_{i,\text{co}}$ contain an overwhelming number of models that are
clearly irrelevant. This is why we shall use a two-stage procedure. First, we
compute a subcollection of  $\mathcal{M}^d_{\text{co}}$. Then, we minimize the
penalized criterion over this subcollection.

 Suppose we are given a fast data-driven method that computes a subset
$\widehat{\mathcal{M}}_i$ of $\mathcal{M}^d_{i,\text{co}}$ for any $i$ in
$1,\ldots p$.

\bigskip

\fbox{
\begin{minipage}{0.95\textwidth}
\begin{algo}\label{algorithm_faste}
Computation of $\widehat{m}^f$ and $\widetilde{\Omega}^f$
\begin{enumerate}
 \item For $i$ going from $1$ to $p$,
\begin{itemize}
\item Compute the subcollection $\widehat{\mathcal{M}}_i$ of
$\mathcal{M}^d_{i,\text{co}}$.
\item Compute $\widehat{s}_{i,m_i}$ for each model
$m_i\in\widehat{\mathcal{M}}_i$.
\item Take $\widehat{m}^f_i:=
\arg\min_{m_i\in\widehat{\mathcal{M}}_i}\log\left(\widehat{s}_{i,m_i}\right)+
pen_i(m_i)\ .$
\end{itemize}
\item Set $\widehat{m}^f= (\widehat{m}^f_1,\ldots,\widehat{m}^f_p)$ and build
$(\widetilde{T}^f,\widetilde{S}^f)$ by gathering the estimators
$(\widehat{t}_{i,\widehat{m}^f_i},\widehat{s}_{i,\widehat{m}^f_i})$.
\item Take
$\widetilde{\Omega}^f=\widetilde{T}^f(\widetilde{S}^f)^{-1}\widetilde{T}^f$.
\end{enumerate}
\end{algo}
\end{minipage}}

\bigskip

In what follows, we refer to this method as {\bf ChoSelect}$^{\text{f}}$.
 For any $2\leq i\leq p$ and any model $m_i$ in $\mathcal{M}_{i,\text{co}}^d$,
we advise to fix the penalty as in Section \ref{section_oracle_complet}:
\begin{eqnarray*}
pen_i(m_i)=\log\left[1+K\frac{|m_i|}{n-|m_i|}\left\{1+\sqrt{2\left[
1+\log\left(\frac{i-1}{|m_i|}\right)\right]} \right\}^2\ \right] ,
\end{eqnarray*}
with $K>1$. $K=1.1$ gives good results in practice.~\\~\\

\begin{remark}
Observe that we use the same data for computing the collections 
$\widehat{\mathcal{M}}_i$ and the estimator $\widetilde{\Omega}^f$. The
estimator $\widetilde{\Omega}^f$ exhibits a small risk as long as the
collections $\widehat{\mathcal{M}}_i$ contain good models as shown by the
following proposition:
\end{remark}

\begin{prte}\label{proposition_risque_fast}
Let $m$ be a model in $\mathcal{M}^d_{\text{co}}$ and $\mathbb{A}_m$ be the
event such that $m\in\widehat{\mathcal{M}}_1\times \ldots\times
\widehat{\mathcal{M}}_p$. Under the same assumptions as Corollary
\ref{cor_oracle_complet}, it holds that
\begin{eqnarray}
 \mathbb{E}\left[\mathcal{K}\left(\Omega;\widetilde{\Omega}^f\right)\mathbf{1}_{
\mathbb{A}_m}\right]& \leq & 
L_{K,\eta}\left\{\mathcal{K}\left(\Omega;\Omega_m\right)+\sum_{i=2}^p
\frac{|m_i|}{n-|m_i|}\left[1+\log\left(\frac{i-1}{|m_i|}\right)\right]+\frac{p}{
n}\right\}\nonumber\\
& + & \tau_n\ ,\label{oracle_complet_fast}
\end{eqnarray}
where $\tau_n$ is defined in Theorem \ref{main_thrm}.

\end{prte}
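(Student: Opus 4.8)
The plan is to reproduce the proof of Corollary \ref{cor_oracle_complet} (equivalently, that of Theorem \ref{main_thrm} specialized to the collection $\mathcal{M}^d_{\text{co}}$ and the penalty (\ref{penalite_complete_log})) almost verbatim, the only changes being that the comparison model is now \emph{fixed} to $m$ rather than selected through an infimum, and that every inequality is multiplied by $\mathbf{1}_{\mathbb{A}_m}$. Because the Kullback loss decomposes coordinate-wise through the chain rule $\mathcal{K}(\Omega;\widetilde{\Omega}^f)=\sum_{i=1}^p\mathcal{K}(t_i,s_i;\widehat{t}_{i,\widehat{m}^f_i},\widehat{s}_{i,\widehat{m}^f_i})$, and the selection $\widehat{m}^f_i$ is carried out separately on each line $i$, it suffices to control each conditional risk $\mathbb{E}[\mathcal{K}(t_i,s_i;\widehat{t}_{i,\widehat{m}^f_i},\widehat{s}_{i,\widehat{m}^f_i})\mathbf{1}_{\mathbb{A}_m}]$ and then sum over $i$.

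First I would write down the basic inequality. On the event $\mathbb{A}_m$ the model $m_i$ belongs to the random subcollection $\widehat{\mathcal{M}}_i$ for every $i$; since $\widehat{m}^f_i$ minimizes $\log(\widehat{s}_{i,\cdot})+pen_i(\cdot)$ over $\widehat{\mathcal{M}}_i$, the model $m_i$ is an admissible competitor and therefore
\[
\log\left(\widehat{s}_{i,\widehat{m}^f_i}\right)+pen_i(\widehat{m}^f_i)\leq \log\left(\widehat{s}_{i,m_i}\right)+pen_i(m_i)\qquad\text{on }\mathbb{A}_m .
\]
This is exactly the starting inequality of the one-stage procedure, except that its validity is now guaranteed only on $\mathbb{A}_m$ — which is precisely why the indicator $\mathbf{1}_{\mathbb{A}_m}$ appears in the statement.

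The crucial observation is that, no matter how the subcollections $\widehat{\mathcal{M}}_i$ are built, one always has $\widehat{m}^f_i\in\widehat{\mathcal{M}}_i\subset\mathcal{M}^d_{i,\text{co}}$, so the selected models still lie in the \emph{full} collection. Consequently, every concentration and deviation bound used in the proof of Theorem \ref{main_thrm} — which are union bounds taken uniformly over all models of $\mathcal{M}^d_{\text{co}}$ and which generate the complexity terms $\log[(i-1)/|m_i|]$ — applies verbatim to $\widehat{m}^f_i$. No new probabilistic control is needed for the first stage: the randomness of $\widehat{\mathcal{M}}_i$ enters only through the basic inequality above, whose validity is secured by $\mathbb{A}_m$. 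I would then insert the basic inequality into the same chain of inequalities as in Theorem \ref{main_thrm}, relating the empirical contrast $\log(\widehat{s}_{i,\cdot})$ to the conditional Kullback risk through these uniform deviation bounds, and sum over $i$ via the chain rule. This produces the bias term $\mathcal{K}(\Omega;\Omega_m)$, the penalty-type term $\sum_{i=2}^p\frac{|m_i|}{n-|m_i|}[1+\log((i-1)/|m_i|)]$, the unavoidable $p/n$ contribution, and the exponentially small remainder $\tau_n$ coming from the probability of the bad deviation event.

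The main (and essentially only) point to check is that the uniform deviation bounds of Theorem \ref{main_thrm} do control \emph{all} models of $\mathcal{M}^d_{\text{co}}$ simultaneously, so that they automatically cover the data-dependent choice $\widehat{m}^f$ without any reference to the subcollection. Since $\widehat{\mathcal{M}}_i\subset\mathcal{M}^d_{i,\text{co}}$, there is no extra union bound to pay for the first stage, and Condition (\ref{dimension_complete}) together with the lower bound on the penalty guarantee, exactly as in the proof of Corollary \ref{cor_oracle_complet}, that the penalty (\ref{penalite_complete_log}) dominates the variance fluctuations. Hence the resulting bound is identical to that of Corollary \ref{cor_oracle_complet} with the infimum removed and the indicator $\mathbf{1}_{\mathbb{A}_m}$ inserted, as claimed.
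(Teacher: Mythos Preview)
Your proposal is correct and follows exactly the same approach as the paper's own proof, which simply observes that on $\mathbb{A}_m$ the basic inequality $\log(\widehat{s}_{i,\widehat{m}^f_i})+pen_i(\widehat{m}^f_i)\leq \log(\widehat{s}_{i,m_i})+pen_i(m_i)$ holds and then states that the remainder is analogous to the proof of Theorem~\ref{main_thrm}. Your emphasis on the key point---that $\widehat{m}^f_i\in\widehat{\mathcal{M}}_i\subset\mathcal{M}^d_{i,\text{co}}$, so the uniform deviation bounds over the full collection automatically control the data-dependent selection---is exactly the mechanism the paper implicitly relies on.
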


\begin{remark}
Hence, under the event $\mathbb{A}_{m^*}$ where $m^*$ is the oracle model,
$\widetilde{\Omega}^f$ achieves the optimal of convergence. The estimator
achieves also a small risk as soon as any "good" model belongs to the estimated
collection. Here, "good" refers to a small Kullback risk.
Observe that it is much easier to estimate a collection
$\widehat{\mathcal{M}}_i$ that contains a "good" model than directly 
estimating a "good" model.
\end{remark}

In fact, Algorithm \ref{algorithm_faste} and Proposition
\ref{proposition_risque_fast} are generally applicable to any collection
$\mathcal{M}$ and penalties defined by (\ref{penalite_complexe}) or
(\ref{penalite_complexe_bay}).\\

The computational cost of Algorithm \ref{algorithm_faste} is directly related to
the cost of the computation of $\mathcal{M}_i$ and to the size of the
collections $\widehat{\mathcal{M}}_i$. The challenge is to design a fast
procedure providing a fairly small collection $\widehat{\mathcal{M}}_i$, which
contains relevant models with large probability. Let us describe two examples of
such a procedure.

%\bigskip

\smallskip
\fbox{
\begin{minipage}{0.95\textwidth}
\begin{algo}\label{Algorithme_lasso} Computation of the collection
$\widehat{\mathcal{M}}_i$ by the Lasso.~\\
Let $D$ be an integer smaller than $\frac{n}{2.5[2+(\log(p/n)\vee 0)]}$ and let
$k$ be any positive integer.
\begin{enumerate}
\item Using the LARS \cite{LARS} algorithm, compute the regularization path of
the Lasso for the regression of ${\bf X}_i$ with respect to the covariates ${\bf
X}_{<i}$. 
\item Order the variables $X_{(1)},\ldots,X_{((i-1)\wedge D)}$ with respect to
their appearance in the regularization path.
\item Take $\widehat{\mathcal{M}}_i:= \mathcal{P}(X_{(1)},\ldots,X_{(k\wedge
(i-1)\wedge D)})\bigcup \text{RP}(i,D)$,\\
where $\mathcal{P}(A)$  contains all the subsets of $A$ and where
$\text{RP}(i,D)$ is the regularization path stopped at $D$ variables.
\end{enumerate}
\end{algo}
\end{minipage}}

\begin{remark}
The size of the random collection $\widehat{\mathcal{M}}_i$ increases with the
parameter $k$. Suppose that $i$ is larger than $D$. The size of
$\widehat{\mathcal{M}}_i$  is generally of the order $2^k\vee D$. The case $k=0$
corresponds to choosing the regularization path of the Lasso for
$\widehat{\mathcal{M}}_i$. The estimator $\widetilde{\Omega}^f$ then performs as
well (up to a $\log p$ factor) as the best parametric estimator with a model in
the regularization path. The collection size is fairly small, but the oracle
model may not belong to $\widehat{\mathcal{M}}_i$ with large probability. This
is especially the case is the true covariance $\Sigma$ is far from the identity
since the Lasso estimator is possibly inconsistent. In many cases, the true (or
the oracle) model is a submodel of the model selected by the Lasso with a
suitable parameter \cite{bolasso}. 
When choosing $k=D$, it is therefore  likely that the true model or a "good"
model belongs to $\widehat{\mathcal{M}}_i$.  
 \end{remark}

The regularization path of the Lasso is not necessarily increasing
\cite{LARS}. If we want that $\widehat{\mathcal{M}}$ contains all subsets of
sparse solutions of the Lasso we need to use a variant of the previous
algorithm:
\smallskip

\fbox{
\begin{minipage}{0.95\textwidth}
\begin{algo}\label{Algorithme_lasso2}
Let $D$ be an integer smaller than $\frac{n}{2.5[2+(\log(p/n)\vee 0)]}$ and let
$k$ be any positive integer.
\begin{enumerate}
\item Using the LARS \cite{LARS} algorithm, compute all the Lasso solutions for
the regression of ${\bf X}_i$ with respect to the covariates ${\bf
X}_{<i}$. 
\item For any $\lambda>0$, consider the set of $\{X_{j_1},X_{j_2}\ldots
X_{j_{s_\lambda}}\}$ of variables selected by the Lasso. If $s_\lambda>k$ we
define $A_i^\lambda = \emptyset$ while we take $A_i^\lambda =
\mathcal{P}(X_{j_1},\ldots,X_{j_{s_\lambda}})$ is $s_\lambda \leq k$.
Here, $\mathcal{P}(A)$  contains all the subsets of $A$.
\item Take $\widehat{\mathcal{M}}_i:= \cup_{\lambda>0}
A_i^\lambda\bigcup \text{RP}(i,D)$,\\
where $\mathcal{P}(A)$  contains all the subsets of $A$ and where
$\text{RP}(i,D)$
is the regularization path stopped at $D$
variables.
\end{enumerate}
\end{algo}
\end{minipage}}

\smallskip

In the following proposition, we show the ChoSelect$^f$ outperforms the Lasso
under restricted eigenvalue conditions. We consider an asymptotic setup where
$p$ and $n$ go to infinity with $p$ larger $n$.\\

\noindent {\bf ASSUMPTIONS:}
\begin{itemize}

\item {\bf (H.1)} The covariance matrix $\Sigma$ satisfies restricted eigenvalue
conditions of order $q^*>0$. 
\begin{eqnarray*}
 c_*\leq \frac{u^*\Sigma_Au}{u^*u}\leq c^*,\hspace{1cm }\forall A \text{ with
$|A|=q^*$ and $u\in\mathbb{R}^{q^*}$}\ .
\end{eqnarray*}
 Moreover, we assume that 
and $q^*\log(p)/n$ goes to $0$ when $p$ and $n$ go to infinity.

\item {\bf (H.2)} Fix some $v<1$. The vector $t_{p}$ (which corresponds to the
$p$-th line of
$T$) is $q$-sparse with some $q<\frac{n^v}{\log p}\vee \frac{n}{\log p}$. The
set of non-zero component is denoted
$m_*$. Let us set some $K>24\vee (2/(1-v))$ and define $$M_2(K,c_*)=
\frac{32}{c_*}\left[\frac{2}{3}+
\frac{112c^*}{9c_*}+\left(\frac{16c^*}{3c_*}\right)^2\right]\bigvee
\left[4(K+12)/c_*\right] .$$
For any zero-component $t_p[j]$, we have
\begin{eqnarray*}
 t_p[j]^2\geq M_2(K,c_*)\frac{q\log (p)}{n}\sigma^2\ . 
\end{eqnarray*}

\item {\bf (H.3)} Define $M_1(c_*,c^*)=2+16\frac{c^*}{c_*}$.
The quantities $q$ and $q^*$ are such that
$$M_1(c_*,c^*)q+1\leq q^*\ .$$ 
\end{itemize}

\begin{prte}\label{prop_consistance}
Consider the procedure ChoSelect$^f$ with $K$ as in {\bf (H.2)} and the penalty
(\ref{penalite_complete_log}) and the algorithm \ref{Algorithme_lasso2}. Take
$k\geq  M_1^*q$ and $D=n/\log(p)^2$. Under Assumptions
${\bf
(H.1)}$, ${\bf (H.2)}$, and ${\bf (H.3)}$
\begin{eqnarray*}
 \mathbb{P}\left[\widehat{m}^f_p= m_{*,p}\right]\rightarrow 1\ .
\end{eqnarray*}
\end{prte}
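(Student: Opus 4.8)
The plan is to reduce the statement to a single sparse linear regression. By the regression representation \eqref{modele_regression} and the expressions \eqref{expression_t_s}, the selected model $\widehat{m}^f_p$ depends on the data only through the regression of $X_p$ on $X_{<p}$: the response is $X_p$, the design is $X_{<p}$, the true coefficient vector is $-t_p$ supported on $m_*=m_{*,p}$, and the noise variance is $s_p=\sigma^2$. I would then show that the penalized residual criterion $m_p\mapsto \log(\widehat{s}_{p,m_p})+pen_p(m_p)$ minimized in Algorithm~\ref{algorithm_faste} is minimized exactly at $m_*$ over the random collection $\widehat{\mathcal{M}}_p$. The argument splits into three events whose intersection has probability tending to one: (i) $m_*\in\widehat{\mathcal{M}}_p$ (screening), (ii) no model missing a relevant variable beats $m_*$ (no underfitting), and (iii) no model containing a spurious variable beats $m_*$ (no overfitting).

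First I would establish (i). Under the restricted eigenvalue condition \textbf{(H.1)}, classical Lasso sparsistency and support-size results provide a regularization level $\lambda\asymp\sigma\sqrt{\log(p)/n}$ — which I take to be the nearest vertex on the LARS path — at which the Lasso support $\widehat{S}_\lambda$ simultaneously contains $m_*$, thanks to the signal-strength condition \textbf{(H.2)}, and has cardinality at most $M_1(c_*,c^*)q$. Since $k\geq M_1^{*}q\geq|\widehat{S}_\lambda|$, Step~2 of Algorithm~\ref{Algorithme_lasso2} places $A_p^{\lambda}=\mathcal{P}(\widehat{S}_\lambda)$ into $\widehat{\mathcal{M}}_p$, and because $m_*\subseteq\widehat{S}_\lambda$ we obtain $m_*\in A_p^{\lambda}\subseteq\widehat{\mathcal{M}}_p$. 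The delicate point is that the path is discrete, so screening and the cardinality bound must be shown to hold at a common achievable path vertex.

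For (ii) and (iii) I would work on the deterministic super-collection $\mathcal{M}^d_{p,\text{co}}$, using $\widehat{\mathcal{M}}_p\subseteq\mathcal{M}^d_{p,\text{co}}$ to transfer the bounds to the random collection and thereby sidestep its data-dependence. For no underfitting, excluding a relevant index $j\in m_*$ inflates the conditional residual variance by at least $c_*\,t_p[j]^2\geq c_*M_2(K,c_*)\,\sigma^2 q\log(p)/n$ by \textbf{(H.1)}; this gain in $\log(\widehat{s}_{p,m_p})$ dominates the penalty difference precisely because $M_2(K,c_*)$ in \textbf{(H.2)} is calibrated to beat the $K\log(p)/n$ scale, once the chi-square fluctuations of the residual norms are controlled uniformly. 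For no overfitting, adding spurious variables lowers $\log(\widehat{s}_{p,m_p})$ by at most a chi-square quantity of order $(|m_p|-q)/n$, whereas $pen_p(m_p)$ grows like $(|m_p|-q)\log(p)/n$; the extra term $\log[(p-1)/|m_p|]$ in \eqref{penalite_complete_log} is exactly what absorbs the $\log\binom{p-1}{|m_p|}$ entropy of the union bound over $\mathcal{M}^d_{p,\text{co}}$, with $K$ and $v$ as in \textbf{(H.2)} making the resulting exponents vanish. Condition \textbf{(H.3)}, namely $M_1(c_*,c^*)q+1\leq q^*$, guarantees that every model competing closely with $m_*$ stays within the order $q^*$ at which \textbf{(H.1)} yields a well-conditioned Gram matrix, so the least-squares residuals there are stable; the much larger models coming from $\text{RP}(p,D)$ are eliminated more crudely, their heavy penalty overwhelming any possible reduction in residual variance.

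The main obstacle is the overfitting step (iii): since $\widehat{\mathcal{M}}_p$ is built from the same sample used for selection, one cannot union-bound directly over it, and must instead pass through the fixed collection $\mathcal{M}^d_{p,\text{co}}$ and verify that the entropy term in \eqref{penalite_complete_log} dominates the exponential deviation bounds for the associated quadratic forms uniformly, while keeping the relevant competitor models inside the restricted-eigenvalue range $q^*$ via \textbf{(H.3)}. Intersecting the three events above, each of probability tending to one, forces the minimizer of the penalized criterion over $\widehat{\mathcal{M}}_p$ to equal $m_*$, that is $\widehat{m}^f_p=m_{*,p}$, which completes the proof.
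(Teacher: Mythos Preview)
Your proposal is correct and follows essentially the same route as the paper's proof (deferred to the technical appendix): reduce to the single regression of $X_p$ on $X_{<p}$, establish screening $m_*\in\widehat{\mathcal{M}}_p$ via the Lasso under the population restricted-eigenvalue condition {\bf (H.1)} together with the signal-strength condition {\bf (H.2)} and the sparsity bound encoded in $M_1(c_*,c^*)$, then rule out under- and overfitting by comparing the penalized log-residuals uniformly over the deterministic supercollection, using {\bf (H.3)} to stay within the RE regime for the relevant competitors and the $\log[(p-1)/|m_p|]$ term in \eqref{penalite_complete_log} to absorb the combinatorial entropy. The only point worth making explicit is that {\bf (H.1)} is stated for the population covariance $\Sigma$, so one first transfers it to the empirical Gram matrix with high probability (this is where $q^*\log(p)/n\to 0$ enters); otherwise your three-event decomposition and the role of each assumption match the paper's argument.
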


The proof of the proposition is postponed to the appendix \cite{technical}

\begin{remark}
In contrast to ChoSelect$^f$, the Lasso procedure does not consistently select
the support of $t_p$ under restricted eigenvalue
conditions \cite{zhao06,zhang08}. Observe that our assumptions ({\bf H.1}),
({\bf H.2}), ({\bf H.3}) and our result are quite similar to the ones obtained
by the stability selection method of Meinshausen and B\"uhlmann
\cite{meinhausen_stability}.
\end{remark}

\begin{remark}
Under similar conditions, one can prover that ChoSelect$^f$ selects
consistently the support of any vector $t_i$ for $n\leq i\leq p$. In order to
consistently estimate the whole pattern of zero of $T$, one needs to slightly
change the penalty (\ref{penalite_complete_log}) by replacing $(i-1)$ by
$(i-1)\vee n$. 
\end{remark}

\begin{remark}
For the sake of simplicity, we have only described two methods for building the
collection $\widehat{\mathcal{M}}$. One may also use a collection based on the
adaptive Lasso or more generally any (data-driven) collection
$\widehat{\mathcal{M}}$. 
Moreover, ChoSelect$^f$ can be interpreted as a way to tune an estimation
procedure and to merge different procedures. Suppose we are given a collection 
$\mathcal{A}$ of estimation
procedure. For any procedure $a\in\mathcal{A}$, we build a collection
$\widehat{\mathcal{M}}^a$ using the model corresponding to the estimator
$\widehat{\Omega}_a$ or using a regularization path associated to $a$ (if
possible).
If we take the collection $\widehat{\mathcal{M}}$ as the reunion of all
$\widehat{\mathcal{M}}^a$ for $a\in\mathcal{A}$, then by Proposition
\ref{proposition_risque_fast} the estimator
$\widetilde{\Omega}^f$  nearly selects the best model (from the risk
point of view) among the ones  previously selected by the procedures
$a\in\mathcal{A}$.
\end{remark}

\section{Simulation Study}\label{section_simulation}

In this section, we investigate the practical performances of the proposed
estimators. We concentrate on two applications: adaptive banding and complete
graph selection.

\subsection{Adaptive banding}\label{section_simulation_banding}

%%%% Je presente deux type de simulations %%%%
 
\subsubsection{Simulation scheme}

{\it Simulating the data}. We have used a similar scheme to Levina et
al.~\cite{levina08}. Simulations were carried out for centered Gaussian
vectors with two different precision
models. The first one has entries of the Cholesky factor exponentially decaying
as one moves away from the diagonal.
\begin{eqnarray*}
{\bf \Omega_1}: \hspace{0.3cm}T[i,j]=0.5^{|i-j]},\hspace{1cm} j<i;
\hspace{1.5cm} s_i=0.01    %description1
\end{eqnarray*}

The second model allows different sparse structures for the Cholesky factors.
\begin{eqnarray*}
{\bf \Omega_2}:\hspace{0.3cm} &k_i \sim U(1,\lceil j/2\rceil);\hspace{1.5cm}
T[i,j]=0.5,\hspace{0.5cm} &i-k_i\leq j\leq i-1\\
&T[i,j]=0, \hspace{0.5cm} j<i-k_i;\hspace{0.5cm} s_i=0.01 &
 %description2
\end{eqnarray*}
Here $U(k_1,k_2)$ denotes an integer selected uniformly at random from all
integers from $k_1$ to $k_2$. We generate from this structure for $p=30$. Levina
et al. pointed out that this structure can generate poorly conditioned
covariance matrix for larger $p$. To avoid this problem, we  divide the
variables for $p=100$ and $p=200$ into respectively $4$ and $8$ different blocks
and we generate a random structure from the random structure from the model
described above for each of the blocks.

For each of the covariance models, we generate a sample of $n=100$. We consider
three different values of $p$: 30, 100, and 200. ~\\~\\
We apply the following procedures:
\begin{itemize}
\item our procedure {\bf ChoSelect} as described in Section
\ref{section_ordered}. More precisely, we take the collection
$\mathcal{M}_{\text{ord}}^{\lfloor n/2\rfloor}$, the penalty
(\ref{penalite_ordonne}), and $K=3$.
 \item the {\bf nested Lasso} method of Levina et al.~\cite{levina08}. It
is computed with the $J_1$ penalty, while its tuning parameter is selected via
$5$-fold cross-validation based on the likelihood.  We have used the penalty
$J_1$
instead of $J_2$ for
computational reasons.
\item the {\bf banding} procedure of Bickel and Levina~\cite{bickel08}. The
tuning parameter is chosen according to Sect.5 in \cite{bickel08} with $50$
random splits.
\item the regularization method of {\bf Ledoit and Wolf}~\cite{ledoit}.
\end{itemize}

For the first covariance model  $\Omega_1$, we also compute the oracle
estimator, i.e. the parametric estimator which minimizes the Kullback risk among
all the estimators $\widehat{\Omega}_m$ with $m\in\mathcal{M}^{\lfloor
n/2\rfloor}_{\text{ord}}$. We recall that the computation of the oracle
estimator require the knowledge of the target $\Omega_1$. The performances of
this estimator are presented here as a benchmark. The experiments are repeated
$N=100$ times. In the second scheme, $N_1=10$ precision matrices are sampled and
$N_2=10$ experiments are made for each sample.

\subsubsection{Results}

In Tables \ref{tab1} and \ref{tab2}, we provide evaluations of  the Kullback
loss 
$$\mathcal{K}(\Omega;\widehat{\Omega}):=
\frac{1}{2}\left[tr(\widehat{\Omega}\Omega^{-1}
)-\log(|\widehat{\Omega}||\Omega^{-1}|)-p\right] \ ,$$ the
operator distance
$\|\widehat{\Omega}-\Omega\|$, and the operator distance between the inverses
$\|\widehat{\Omega}^{-1}-\Sigma\|$ for any of the fore-mentioned estimators. We
have chosen the Kullback loss because of its connection with discriminant
analysis. The two other loss functions are interestingly connected to the
estimation of the eigenvalues and the eigenspaces.

For the second structure, we also consider the pattern of zero estimated by our
procedure, the nested Lasso and the banding method of Bickel and Levina. More
precisely, we estimate the power (i.e. the fraction of non-zero terms in $T$
estimated as non-zero) and the FDR (i.e. the ratio of the false discoveries over
the true discoveries) in Table \ref{tab3}.\\

\begin{Table}[hptb] 
\caption{Estimation and $95\%$ confidence interval of the Kullback risk, the
operator distance risk, and the operator distance between inverses risk  for 
the first covariance model $\Omega_1$.\label{tab1}} 
\begin{center}
\begin{tabular}{|l|ccccc|}
 \hline Method & Ledoit & Banding &  Nested Lasso  & ChoSelect  & Oracle\\ 
\hline
\hline & \multicolumn{5}{c|}{ Kullback discrepancy
$\mathcal{K}(\Omega;\widehat{\Omega})$ }\\ 
\hline $p=30$ & $2.00\pm 0.05$ &$0.90\pm 0.05$ & $0.87\pm 0.02$  & $1.00\pm
0.03$ & $0.79\pm
0.02$\\
\hline $p=100$& $14.4\pm 0.5$ &$3.6\pm 0.4$  &$3.2\pm 0.1$  &$3.7\pm 0.1$
& $2.9\pm
0.1$  \\
\hline  $p=200$ & $33.4\pm 0.6$ &$9.8\pm 1.5$ & $6.4\pm 0.1$  & $7.5\pm
0.1$& $5.9\pm
0.1$ \\
\hline   & \multicolumn{5}{c|}{ Operator distance $\|\widehat{\Omega}-
\Omega\|\times 10^{-2}$ }\\ 
\hline$p=30$ &$1.86\pm 0.07$ &$1.28\pm 0.06$ & $1.18\pm 0.04$ & $1.36\pm 0.06$ &
$1.19\pm
0.04$\\
\hline $p=100$&$1.76\pm 0.09$ &$1.68\pm 0.14$ & $1.52\pm 0.06$ & $1.75\pm 0.06$
& $1.49\pm
0.05$\\
\hline  $p=200$ & $1.33\pm 0.01$ & $2.19\pm 0.22$ & $1.61\pm 0.04$ & $1.92\pm
0.06$ & $1.61\pm
0.05$  \\ 
\hline  & \multicolumn{5}{c|}{ Operator distance $\|\widehat{\Omega}^{-1}-
\Sigma\|$ }\\ 
\hline $p=30$ & $0.14\pm 0.02$ &$0.15\pm 0.02$ & $0.17\pm 0.02$  & $0.15\pm
0.02$ & $0.14\pm
0.02$\\
\hline $p=100$& $1.4\pm 0.2$ &$1.4\pm 0.2$  &$1.7\pm 0.2$  &$1.5\pm 0.2$ &
$1.4\pm 0.2$ \\
\hline  $p=200$& $5.9\pm 0.6$ & $5.6\pm 0.7$ & $6.8\pm 0.7$  & $6.5\pm 0.6$ &
$5.9\pm 0.6$ \\
\hline 
\end{tabular}
\end{center}

\end{Table}

\begin{Table}[hptb] 
\caption {Estimation and $95\%$ confidence interval of the Kullback risk, the
operator distance risk, and the operator distance between inverses risk  for 
the second covariance model $\Omega_2$.\label{tab2}}  
\begin{center}
\begin{tabular}{|l|cccc|}
\hline
Method & Ledoit & Banding &  Nested Lasso  & ChoSelect   \\ \hline
\hline& \multicolumn{4}{c|}{ Kullback discrepancy
$\mathcal{K}(\Omega;\widehat{\Omega})$ }\\ 
\hline $p=30$ & $112\pm 4$ &$3.2\pm 0.2$ & $3.2\pm 0.2$  & $1.2\pm 0.1$ \\
\hline $p=100$& $253\pm 7$ &$27.4\pm 1.6$  &$7.6\pm 0.2$  &$3.5\pm 0.1$  \\
\hline  $p=200$ & $565\pm 5$ &$58\pm 2$ & $14.6\pm 0.2$  & $7.2\pm 0.1$ \\
\hline & \multicolumn{4}{c|}{ Operator distance $\|\widehat{\Omega}-
\Omega\|\times 10^{-2}$ } \\
\hline $p=30$& $9.6\pm 0.4$ &$8.2\pm 0.4$ & $7.3\pm 0.4$ & $3.6\pm 0.3$\\
\hline $p=100$& $8.7\pm 0.2$&$8.2\pm 0.2$ & $6.8\pm 0.2$ & $3.8\pm 0.2$\\
\hline  $p=200$& $10.0\pm 0.2$ &$9.5\pm 0.3$ & $7.9\pm 0.3$ & $4.4\pm 0.2$\\
\hline  & \multicolumn{4}{c|}{ Operator distance $\|\widehat{\Omega}^{-1}-
\Sigma\|\times 10^{-3}$ }\\ 
\hline $p=30$ & $13.4\pm 4.2$ &$12.9\pm 4.0$ & $14.1\pm 4.4$  & $12.9\pm 4.0$\\
\hline $p=100$& $1.5\pm 0.4$ &$1.4\pm 0.4$  &$1.3\pm 0.4$  &$1.4\pm 0.4$ 
\\\hline 
 $p=200$ & $1.8\pm 0.2$ &$1.3\pm 0.2$ & $1.3\pm 0.2$  & $1.3\pm 0.2$  \\
\hline 
 \end{tabular}
\end{center}

\end{Table} 

~\\ {\it Comments of Tables \ref{tab1} and \ref{tab2}}: In the first scheme
$\Omega_1$, the three methods based on Cholesky decomposition exhibit a Kullback
risk close to the oracle. The
ratio of their Kullback risks over the oracle risk remains smaller than $1.4$.
The risk of the nested Lasso and the banding method is about $15\%$ smaller than
the risk of ChoSelect. We observe the same pattern for the operator distance
between precision matrices. In contrast, all these estimators have more or less
the same risks for the operator distance between the covariance matrices. The
estimator of Ledoit and Wolf is a regularized version of the empirical
covariance matrix. Its performances with respect to the Kullback loss are poor
but it behaves well with respect to the operator norms.

In the second scheme, the method of Ledoit and Wolf performs poorly with
respect to the Kullback loss functions and the first operator norm loss
function. ChoSelect performs two times better than the
nested Lasso
in terms of the Kullback discrepancy and the operator distance between precision
matrices. The banding method exhibits a far worse Kullback risk. As in the first
scheme, the three procedures based on Cholesky decomposition perform similarly
in terms of the operator distance
between covariance matrices. These last risks are high for $p=30$ because the
covariance matrix is poorly conditioned in this case and its eigenvalues are
high. 

The banding method only performs well if the Cholesky matrix $T$ is well
approximated by a banded matrix, which is not the case in the second scheme. The
nested Lasso seems to perform well when there is an exponential decay of the
coefficients as in the first scheme. However, its performance seem to be far
worse when the decay is not exponential. In contrast, ChoSelect seems to always
perform quite well. This observation corroborates the theory: indeed, we have
stated in Corollary \ref{cor_oracle_ordonne} that ChoSelect satisfies an oracle
inequality without any assumption on $\Sigma$. Finally, there no clear
interpretation for the risk with respect to the operator norm between
covariances.
\\

\begin{Table}[hptb] 
\caption{Estimation  and $95\%$ confidence interval of the power and FDR  for
the second precision model $\Omega_2$. \label{tab3}}  
\begin{center} 
\begin{tabular}{|l|ccc|ccc|}
 \hline& \multicolumn{3}{c|}{ Power$\times 10^2$ }& \multicolumn{3}{c|}{
FDR$\times 10^2$ }\\ \hline
Method & Banding &  Nested Lasso  & ChoSelect & Banding &  Nested Lasso  &
ChoSelect  \\  
\hline $p=30$ & $69.7\pm 2.3$ & $82.4\pm 0.3$  & $99.2\pm 1.1$ & $23.0\pm 1.0$ &
$17.9\pm 0.2$ & $4.7\pm 0.1$\\
\hline $p=100$& $27.0\pm 0.1$  &$82.5\pm 0.01$  &$99.4\pm 0.2$  & $3.0\pm 0.1$ &
$25.7\pm 0.2$ & $5.0\pm 0.1$ \\\hline 
 $p=200$ & $26.2\pm 0.1$ & $82.9\pm 0.1$  & $99.6\pm 0.1$ &$3.5\pm 0.1$ &
$10.0\pm 0.2$ & $4.5\pm 0.2$ \\\hline 
 \end{tabular}
\end{center}
\end{Table}

~\\ {\it Estimating the pattern of zero}. In the second scheme, we can compare
the ability of the procedures to estimate well the pattern of non-zero
coefficients (Table \ref{tab3}). The banding method does not work well since the
Cholesky factor $T$ is not banded. ChoSelect  a higher power and a lower FDR
than the nested Lasso.

\subsection{Complete Graph selection}\label{section_simulation_complete}

\subsubsection{Simulation scheme}

%%%%%%%%%%%%%%%%%%%%%
%%%%%%% Expliquer la facon de simuler
%%%%%%%%%%%%%%%%%%%%%%%%%%%
%%%% Premiere facon de simuler%%%%
{\it Simulating the data}. In the first simulation study, we consider
Gaussian random vectors whose precision
matrices based on directed graphical models. 
\begin{enumerate}
 \item First, we sample a directed graph $\overrightarrow{G}$ in the following
way. For any node $i$ in $\{2,\ldots,p\}$ and any node $j<i$, we put an edge
going from $j$ to $i$ with probability $(Esp/(i-1)\wedge 0.5)$, where Esp is a
positive parameter previously chosen. Hence, the expected number of parents for
a given node is Esp$\wedge (i-1)/2$.
\item The precision matrix $\Omega^c_1$ is then defined from
$\overrightarrow{G}$.
\begin{eqnarray*}
{\bf \Omega^c_1}: \hspace{1cm}T[i,j]& \sim & U\mathrm{nif}[-1,1]  \text{ if } j
\rightarrow i \text{ in } \overrightarrow{G}\ , \\
T[i,j]& = &1 \text{ if } i=j\ \text{ and } 
T[i,j] = 0 \text{ else. }\\
S[i,i] & \sim & U\mathrm{nif}[1,2]
\end{eqnarray*}
\end{enumerate}
In the simulations, we set  $p=30,100,200$, Esp$=1,3,5$, and $n=100$.\\

%%%% Deuxime facon  de simuler %%%
In the second simulation scheme, we consider the case where the "good" ordering
is partially known. More precisely, we first sample a precision matrix
$\Omega^c_1$ according to the first simulation scheme. Then, we sample uniformly
$10$ variables and change uniformly their place in the ordering. This results in
a new precision matrix ${\bf \Omega^c_2}$. Its Cholesky factor is generally less
sparse than the one of $\Omega^c_1$. The purpose of this scheme is to check
whether our method is robust to small changes in the ordering. For this study,
we choose $p=200$, Esp$=1,3,5$, and $n=100$.

~\\
We compute the following estimators:
\begin{itemize}
\item the procedure {\bf ChoSelect}$^\text{f}$ as described in Section
\ref{section_fast_algorith}. We take the collection $\mathcal{M}^d_{\text{co}}$
with ~\\$d=\frac{n}{2.5[2+\log(n\wedge p)]}$. The collection
$\widehat{\mathcal{M}}$ is computed according to Algorithms
\ref{algorithm_faste} and \ref{Algorithme_lasso} with $k=8$. Finally, we use the
penalty (\ref{penalite_complete_log}) with $K=1.1$.
\item the procedure {\bf ChoSelect} with collection $\mathcal{M}^7_{\text{co}}$,
the penalty (\ref{penalite_complete_log}) with $K=1.1$. Since this method is
computationally prohibitive, we only apply it for $p=30$.
\item the regularization method of {\bf Ledoit and Wolf}~\cite{ledoit}.
\item the {\bf Glasso} method~\cite{aspremont}. It is computed using the
\emph{Glasso} R-package by Friedman et al.~\cite{friedman08}, while the
tuning parameter is chosen via $5$-fold cross validation based on the
likelihood. Following Rothman et al.~\cite{rothman07} and Yuan and
Lin~\cite{yuan07}, we do not penalize the diagonal of $\Omega$.
\item the {\bf Lasso} method of Huang et al.~\cite{huang06}. The
regularization parameter is calculated by $5$-fold cross validation based on the
likelihood.
\end{itemize}

For each  estimator and simulation scheme, we evaluate the Kullback loss
$\mathcal{K}(\Omega;\widehat{\Omega})$, the operator 
$\|\widehat{\Omega}-\Omega\|$, and the operator distance between the inverses
$\|\widehat{\Omega}^{-1}-\Sigma\|$. We also consider the pattern of zero
estimated by our procedure ChoSelect$^{\text{f}}$ and the Lasso of Huang
et al.~\cite{huang06}. More precisely, we evaluate the power (i.e. the
fraction of non-zero terms in $T$ estimated as non-zero) and the FDR (i.e. the
ratio of the false discoveries over the true discoveries) in the first
simulation study.  Empirical $95\%$ confidence intervals of the estimates are
also computed. The experiments are repeated $N=100$ times: $N_1=10$ precision
matrices are sampled and $N_2=10$ experiments are made for each precision matrix
sampled.

\subsubsection{Results}

\begin{Table}[hptb] 
\caption{Comparison between  ChoSelect and ChoSelect$^{\text{f}}$ using the
first covariance model 
$\Omega_1^c$ and $p=30$. \label{tab4-pfaible}}  
\begin{center}
\begin{tabular}{|l||cc|}
\hline &\multicolumn{2}{c|}{ Kullback discrepancy
$\mathcal{K}(\Omega;\widehat{\Omega})$ } \\ 
\hline Method   & ChoSelect$^{\text{f}}$ & ChoSelect   \\ \hline
\hline   Esp=1 & $0.69\pm 0.04$ & $0.69\pm 0.04$  \\
\hline  Esp=3 & $1.29\pm 0.04$ & $1.31\pm 0.05$ \\
\hline  Esp=5 & $1.95\pm 0.06$ & $1.82\pm 0.06$ \\
\hline
\end{tabular}
\end{center}

\end{Table}

~\\ {\it Comparison of  ChoSelect and ChoSelect$^{\text{f}}$}. In  Table
\ref{tab4-pfaible}, we have set $p=30$ in order to compute the method  ChoSelect
and compare it with ChoSelect$^{\text{f}}$. It seems that both methods perform
more or less similarly. When the sparsity of the Cholesky factor decreases
(Esp=5), ChoSelect$^{\text{f}}$ exhibits a slightly smaller Kullback risk. 

These simulations confirm that ChoSelect$^{\text{f}}$ exhibits similar
performances to ChoSelect with a much small computational complexity. In the
other simulations, we only compute ChoSelect$^{\text{f}}$.

\begin{Table}[hptb] 
\caption{Comparison between the procedures for the first covariance model
$\Omega_1^c$. \label{tab4}}  
\begin{center}
\begin{tabular}{|l|l|cccc|}
\hline
\multicolumn{2}{|c}{Method} & Ledoit & Glasso &  Lasso  & ChoSelect$^{\text{f}}$
 \\ \hline
\hline \multicolumn{2}{|c|}{}&\multicolumn{4}{c|}{ Kullback discrepancy
$\mathcal{K}(\Omega;\widehat{\Omega})$ }\\ 
\hline $p=100$ & Esp=1&   $7.7\pm 0.1$ & $3.7\pm 0.1$ & $3.1\pm 0.1$ &  $2.6\pm
0.1$  \\
\hline & Esp=3 & $13.9\pm 0.2$ & $9.4\pm 0.1$ & $7.2\pm 0.1$ & $5.9\pm 0.1$ \\
\hline & Esp=5 &$16.7\pm 0.2$ & $12.6\pm 0.2$ & $10.9\pm 0.2$ & $10.1\pm 0.2$ \\
\hline  $p=200$ & Esp=1& $19.4\pm 0.2$ & $9.4\pm 0.2$& $7.4\pm 0.1$ & $5.9\pm
0.1$\\
\hline & Esp=3 & $41.0\pm 0.8$ & $21.7\pm 0.3$ & $18.1\pm 0.2$ &  $13.6\pm 0.2$
\\
\hline & Esp=5 & $54.8\pm 2.1$ & $35.2\pm 0.2$ & $28.8\pm 0.3$& $24.7\pm 0.4$  
\\
\hline 
\hline \multicolumn{2}{|c|}{}&  \multicolumn{4}{c|}{ Operator distance
$\|\widehat{\Omega}- \Omega\|$ } \\
\hline $p=100$ & Esp=1& $5.5\pm 0.2$ & $4.6\pm 0.2$& $3.8\pm 0.2$ &$3.2\pm 0.1$ 
\\
\hline & Esp=3 &$8.6\pm 0.2$ & $9.3\pm 0.2$ &$6.8\pm 0.2$  & $4.6\pm 0.1$\\
\hline & Esp=5 &$11.5\pm 0.1$ & $11.9\pm 0.2$ & $9.5\pm 0.1$ & $7.6\pm 0.3$  \\
\hline  $p=200$ & Esp=1& $6.2\pm 0.1$ & $5.7\pm 0.2$ & $4.6\pm 0.1$ & $3.8\pm
0.2$ \\
\hline & Esp=3 & $10.6\pm 0.1$ & $10.7\pm 0.2$ & $8.8\pm 0.2$ &$5.4\pm 0.1$ \\
\hline & Esp=5 & $15.0\pm 0.3$ & $15.0\pm 0.2$ & $13.0\pm 0.3$ & $8.1\pm 0.2$ 
\\
\hline 
\hline \multicolumn{2}{|c|}{}&  \multicolumn{4}{c|}{ Operator distance
$\|\widehat{\Omega}^{-1}- \Sigma\|$ } 
\\
\hline $p=100$ & Esp=1& $1.5\pm 0.1$ & $1.1\pm 0.1$ & $1.1\pm 0.1$ &   $0.8\pm
0.1$ \\
\hline & Esp=3 & $4.3\pm 0.2$ & $3.9\pm 0.2$ & $5.5\pm 0.3$ &  $3.6\pm 0.3$ \\
\hline & Esp=5 &$8.4\pm 0.5$ & $9.1\pm 0.7$ & $13.0\pm 0.7$ & $8.4\pm 0.5$  \\
\hline  $p=200$ & Esp=1& $2.4\pm 0.1$ & $1.9\pm 0.1$ & $1.7\pm 0.1$ & $1.2\pm
0.1$  \\
\hline & Esp=3 & $8.3\pm 0.5$ & $6.3\pm 0.3$ & $10.7\pm 0.6$  & $6.6\pm 0.3$ \\
\hline & Esp=5 & $16.9\pm 1.4$ & $14.7\pm 1.0$ &$30.3\pm 2.9$ & $17.6\pm 1.6$ 
\\
\hline 
 \end{tabular}
\end{center}

\end{Table}

~\\ {\it Estimation of $\Omega$}. This study corresponds to the situation where
a "good" ordering of the variables is known. In Table \ref{tab4},
ChoSelect$^{\text{f}}$ has a smaller Kullback risk than the Lasso, which is
better than the Glasso, and Ledoit and Wolf's method. This is especially true
when $p$ is large. We also observe the same results it terms of the operator
distance between the precision matrices. The results for the operator distance
between covariance matrices are more difficult to interpret. It seems that the
risk of the Lasso is high, while the Glasso and ChoSelect$^{\text{f}}$ perform
more or less similarly. Ledoit and Wolf's method gives good results when
Esp=$3,5$.

\begin{Table}[hptb] 
\caption{Estimation  and $95\%$ confidence interval of the power and FDR  for
the first covariance model $\Omega_1^c$ with $p=200$.\label{tab5}} 
\begin{center}
\begin{tabular}{|l|cc|cc|}
\hline
Method &  \multicolumn{2}{|c|}{Lasso} & 
\multicolumn{2}{|c|}{ChoSelect$^{\text{f}}$}  \\ \hline
\hline &Power$\times 10^2$ & FDR$\times 10^2$ & Power$\times 10^2$ & FDR$\times
10^2$\\ 
\hline  Esp=1& $58.0\pm 0.6$ & $79.9\pm 0.4$ & $40.6\pm 0.6$ & $5.4\pm 0.6$ \\
\hline  Esp=3 & $65.3\pm 0.6$ & $72.7\pm 0.3$ & $50.9\pm 0.5$ & $9.7\pm 0.4$\\ 
\hline  Esp=5 & $67.4\pm 0.4$ & $69.2\pm 0.2$ & $52.0\pm 0.3$ & $21.1\pm 0.7$\\
\hline 
\end{tabular}
\end{center}

\end{Table}

~\\ {\it Estimation of the graph}. In Table \ref{tab5}, we compare the ability
of the procedures to estimate the underlying directed graph. This is why we only
consider the procedures based on Cholesky decomposition: the Lasso of Huang
et al. and ChoSelect$^{\text{f}}$. The Lasso exhibits a high power but
also a high FDR (larger than 50$\%$). In contrast, ChoSelect$^{\text{f}}$ keeps
the FDR reasonably small to the price of a small loss in the power. When $p$
increases, the power of the procedures decreases. These results corroborate the
results of Proposition \ref{prop_consistance}. When the number of parents
(i.e. ESP) increases, it seems that the FDR of the ChoSelect$^{\text{f}}$
increases. We recall that if one wants a lower FDR in the graph estimator, one
should choose a larger value for $K$. In practice, taking $K=2.5$ or $K=3$
enforces the FDR to be smaller than $10\%$.

\begin{Table}[hptb] 
\caption{Comparison between the procedures for the second covariance model
$\Omega_2^c$ with $p=200$.\label{tab6}}  
\begin{center}
\begin{tabular}{|l|cccc|}
\hline
Method & Ledoit & Glasso &  Lasso  & ChoSelect$^{\text{f}}$   \\ \hline
\hline &\multicolumn{4}{c|}{ Kullback discrepancy
$\mathcal{K}(\Omega;\widehat{\Omega})$ }\\ 
\hline   Esp=1& $19.2\pm 0.2$ & $8.8\pm 0.2$& $7.5\pm 0.1$ & $6.0\pm 0.1$ \\
\hline  Esp=3 & $39.6\pm 0.7$ & $21.8\pm 0.2$ & $18.9\pm 0.2$ &  $14.7\pm 0.2$
\\
\hline  Esp=5 & $56.4\pm 1.4$ & $35.6\pm 0.3$ & $32.0\pm 0.4$& $28.9\pm 0.4$  
\\
\hline 
\hline &  \multicolumn{4}{c|}{ Operator distance $\|\widehat{\Omega}- \Omega\|$
} \\
\hline  Esp=1& $6.4\pm 0.2$ & $5.6\pm 0.1$ & $4.8\pm 0.2$ & $3.8\pm 0.1$ \\
\hline  Esp=3 & $10.5\pm 0.2$ & $10.7\pm 0.2$ & $8.6\pm 0.2$ &$5.9\pm 0.2$  \\
\hline  Esp=5 & $15.0\pm 0.1$ & $14.7\pm 0.3$ & $13.6\pm 0.2$ & $9.1\pm 0.2$  \\
\hline 
\hline &  \multicolumn{4}{c|}{ Operator distance $\|\widehat{\Omega}^{-1}-
\Sigma\|$ } \\
\hline   Esp=1& $2.4\pm 0.1$ & $1.7\pm 0.1$ & $1.8\pm 0.1$ & $1.3\pm 0.1$  \\
\hline  Esp=3 & $7.6\pm 0.4$ & $6.3\pm 0.4$ & $9.3\pm 0.5$  & $6.6\pm 0.4$ \\
\hline  Esp=5 & $20.1\pm 1.6$ & $16.3\pm 1.3$ &$35.1\pm 2.5$ & $21.5\pm 1.5$  \\
\hline 
 \end{tabular}
\end{center}

\end{Table}

~\\ {\it Effect of the ordering}. In  Table \ref{tab6}, we study here the
performances of the procedures when the ordering of the variables is slightly
modified. The
Glasso method and the regularization method of Ledoit and Wolf perform as in the
first scheme since these procedures do not depend on a particular ordering of
the variables. Lasso and ChoSelect$^{\text{f}}$ procedures provide slightly
worse results than in the first scheme, especially when the sparsity decreases.
Indeed, the effect of a bad ordering is higher when the sparsity is low.
Nevertheless, ChoSelect$^{\text{f}}$ still performs better than the other
procedures for the Kullback risk and the operator distance between precision
matrices, while the Glasso and ChoSelect$^{\text{f}}$  still perform similarly
the operator distance between covariance matrices. The respective performances
are different when the ordering is completely unknown (see the
Appendix~\cite{technical}).

~\\ {\bf Conclusion}. When the ordering is known or partially known,
ChoSelect$^{\text{f}}$ has a small risk with respect to the Kullback discrepancy
and the operator distance between precision matrices. Moreover, 
ChoSelect$^{\text{f}}$ provides a good estimation of the underlying graph. It is
difficult to interpret the results for the operator distance
between the covariance matrices. If the objective is to minimize the operator
distance $\|\widehat{\Sigma}-\Sigma\|$, it seems that a direct estimation of
$\Sigma$ should be prefered to the inversion of  an estimation of $\Omega$.

\section{Discussion}\label{section_discussion}

{\it Adaptive banding problem}. ChoSelect achieves an oracle inequality and is
adaptive to the decay in the Cholesky factor $T$. We have also derived
corresponding asymptotic results for the Frobenius loss function. This procedure
is computationally competitive with the other existing methods. Finally, we
explicitly provide  the penalty and there are therefore no calibration problems
contrary to most procedures in the literature.
In a future work, we would like to study the performances of ChoSelect with
respect to the operator norm and prove corresponding minimax bounds. Bickel and
Levina have indeed proved risk bounds for their banding procedure
\cite{bickel08}. This method is based on maximum likelihood estimators as
ChoSelect. This is why we believe that ChoSelect may also satisfy fast rates of
convergence with respect to the operator distance.

~\\{\it Complete graph estimation problem}. We have derived that ChoSelect
satisfies an oracle type inequality and we have derived the minimax rates of
estimation for sparse Cholesky factors $T$. ChoSelect
is shown to achieves minimax adaptiveness to the
unknown sparsity of Cholesky factor. As in the banded case, we provide an
explicit penalty. However, this
procedure is computationally feasible only for small $p$. In contrast, the
method ChoSelect$^{\text{f}}$ introduced in Section \ref{section_fast_algorith}
shares some advantages of the previous method with a much lower computational
  cost. In Algorithm \ref{Algorithme_lasso}, we propose two collections based
on the Lasso. In practice, there are maybe smarter ways of building the
collections $\widehat{\mathcal{M}}_i$ than using the Lasso.

\section{Proofs}\label{section_proofs}

\subsection{Some notations and probabilistic tools}

First, we introduce the prediction contrasts $l_i(.,.)$. Consider $i$ be an
integer between $2$ and $p$ and let $(t,t')$ be two row vectors in
$\mathbb{R}^{i-1}$ then the contrast $l_i(t,t')$ is defined by
\begin{eqnarray}\label{definition_l}
 l_i(t,t'):=\var\left[\sum_{j=1}^{i-1}(t[j]-t'[j])X[j]\right] \ .
\end{eqnarray}
Consider a model $m_i\in\mathcal{M}_i$. We define the random variable
$\epsilon_{m_i}$ by 
\begin{eqnarray}
X[i]=\sum_{j\in m_i}-t_{i,m_i}[j]X[j]+\epsilon_{m_i}+\epsilon_i
\hspace{1.5cm}\text{a.s.}\ . \label{definition_epsilon_m}
\end{eqnarray}

By definition of $t_{i,m_i}$ in Section \ref{section_decomposition}, the
variable $\epsilon_{m_i}$ is independent of $\epsilon$ and of $X_{m_i}$.
Besides, its variance equals $l_i(t_{i,m_i},t_i)$. If follows from the
definition of $s_{i,m_i}$ that $s_{i,m_i}=l_i(t_{i,m_i},t_i)+s_i$. The vectors
$\boldsymbol{\epsilon}$ and $\boldsymbol{\epsilon}_m$ refer to the $n$ samples
of $\epsilon$ and $\epsilon_m$. For any model $m$ and any vector $Z$ of size
$n$, $\Pi_mZ$ refers to the projection of $Z$ onto the subspace generated by
$({\bf X}_i)_{i\in m}$ whereas  $\Pi^\perp_{m}Z$ stands for $Z-\Pi_m Z$. For any
subset $m$ of $\{1,\ldots,p\}$, $\Sigma_m$ denotes the covariance matrix of the
vector $X^*_m$. Moreover, we define the row vector
$Z_{m}:=X_{m}\sqrt{\Sigma_{m}^{-1}}$ 
in order to deal with  standard Gaussian vectors. Similarly to the matrix ${\bf
X}_{m}$, the  $n\times |m|$ matrix ${\bf Z}_{m}$ stands for the $n$ observations
of $Z_{m}$.

\begin{lemma}\label{lemme_basic}
The conditional Kullback-Leibler divergence
$\mathcal{K}\left(t_i,s_i;t'_i,s_i'\right)$ decomposes as 
\begin{eqnarray}\label{definition_kullback}
\mathcal{K}\left(t_i,s_i;t'_i,s_i'\right) =
\frac{1}{2}\left[\log\frac{s_i'}{s_i}+\frac{s_i}{s_i'}-1 +
  \frac{l_i(t_i,t'_i)}{s_i'}\right] \ .
\end{eqnarray}
The estimators $\widehat{t}_{i,m_i}$ and $\widehat{s}_{i,m_i}$ are expressed as
follows
\begin{eqnarray}
{\bf X}_{<i}\widehat{t}^*_{i,m_i} & = & -{\bf X}_{m_i}({\bf X}^*_{m_i}{\bf
X}_m)^{-1}{\bf X}^*_{m_i}{\bf X}_i\ , \label{expression_estimateur_t}\\
 \widehat{s}_{i,m_i} & =& \|\Pi_{m_i}^{\perp}{\bf X}_i\|_n^2
=\|\Pi_{m_i}^{\perp}(\boldsymbol{\epsilon}_{i,m_i}+\boldsymbol{\epsilon}_{i}
)\|_n^2\ .\label{expression_s}
\end{eqnarray}
\end{lemma}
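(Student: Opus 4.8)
The plan is to establish the three identities in turn by direct computation, since each is a consequence of standard Gaussian and least-squares facts applied to the conditional regression model (\ref{modele_regression}). For the decomposition (\ref{definition_kullback}), I would first note that the conditional law $\mathbb{P}_{t_i,s_i}(X_i\mid X_{<i})$ is the univariate Gaussian with mean $-X_{<i}t_i^*$ and variance $s_i$, while $\mathbb{P}_{t_i',s_i'}(X_i\mid X_{<i})$ is Gaussian with mean $-X_{<i}(t_i')^*$ and variance $s_i'$. Since the Kullback divergence between $\mathcal{N}(\mu,\sigma^2)$ and $\mathcal{N}(\mu',\sigma'^2)$ equals $\frac{1}{2}[\log(\sigma'^2/\sigma^2)+\sigma^2/\sigma'^2-1+(\mu-\mu')^2/\sigma'^2]$, substituting the conditional means and variances leaves the only $X_{<i}$-dependent term as $(X_{<i}(t_i-t_i')^*)^2/s_i'$. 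Taking the expectation prescribed in (\ref{definition_kullback_conditionnel}) and using that $X$ is centered gives $\mathbb{E}[(X_{<i}(t_i-t_i')^*)^2]=\var[\sum_{j<i}(t_i[j]-t_i'[j])X[j]]=l_i(t_i,t_i')$ by Definition (\ref{definition_l}), which yields the claimed formula.

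For (\ref{expression_estimateur_t}), the key observation is that restricting the support of $t_i'$ to $m_i$ turns the minimization (\ref{expression_t_s}) into the ordinary least-squares problem $\min_{\beta}\|{\bf X}_i+{\bf X}_{m_i}\beta\|_n^2$ over $\beta\in\mathbb{R}^{|m_i|}$. Writing the normal equations ${\bf X}_{m_i}^*({\bf X}_i+{\bf X}_{m_i}\beta)=0$ gives $\beta=-({\bf X}_{m_i}^*{\bf X}_{m_i})^{-1}{\bf X}_{m_i}^*{\bf X}_i$, and since ${\bf X}_{<i}\widehat{t}_{i,m_i}^*={\bf X}_{m_i}\beta$ (the coordinates outside $m_i$ being null), this is exactly the stated identity.

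For (\ref{expression_s}), I would substitute the expression just obtained into ${\bf X}_i+{\bf X}_{<i}\widehat{t}_{i,m_i}^*$ and recognize ${\bf X}_i-{\bf X}_{m_i}({\bf X}_{m_i}^*{\bf X}_{m_i})^{-1}{\bf X}_{m_i}^*{\bf X}_i=(\mathrm{Id}-\Pi_{m_i}){\bf X}_i=\Pi_{m_i}^{\perp}{\bf X}_i$, so that (\ref{expression_t_s}) gives the first equality $\widehat{s}_{i,m_i}=\|\Pi_{m_i}^{\perp}{\bf X}_i\|_n^2$. For the second equality I would insert the $n$-sample version of the decomposition (\ref{definition_epsilon_m}), namely ${\bf X}_i=-{\bf X}_{m_i}t_{i,m_i}^*+\boldsymbol{\epsilon}_{m_i}+\boldsymbol{\epsilon}_i$; applying $\Pi_{m_i}^{\perp}$ annihilates the term $-{\bf X}_{m_i}t_{i,m_i}^*$ because it lies in the column span of ${\bf X}_{m_i}$, leaving $\Pi_{m_i}^{\perp}{\bf X}_i=\Pi_{m_i}^{\perp}(\boldsymbol{\epsilon}_{m_i}+\boldsymbol{\epsilon}_i)$.

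The computations are all routine; the only point demanding care is the expectation step in the first identity, where one must verify that the cross term integrates to the contrast $l_i(t_i,t_i')$ while the deterministic $\log$ and ratio terms pass unchanged through $\mathbb{E}[\cdot]$. No part presents a genuine obstacle, so the bulk of the work is bookkeeping with the sign convention in (\ref{modele_regression}) and the identification of $\Pi_{m_i}$ with the hat matrix of the least-squares regression on $m_i$.
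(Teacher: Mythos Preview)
Your proposal is correct and follows exactly the approach the paper indicates: the paper simply remarks that the lemma is a consequence of the definitions of $\widehat{t}_{i,m_i}$, $\widehat{s}_{i,m_i}$, and $\mathcal{K}(t_i,s_i;t'_i,s'_i)$, and you have written out those routine computations in full. There is nothing to add.
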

This lemma is a consequence of the definitions of $\widehat{t}_{i,m_i}$,
$\widehat{s}_{i,m_i}$, and $\mathcal{K}\left(t_i,s_i;t'_i,s_i'\right)$ in
Sections \ref{section_description} and \ref{section_decomposition}.

\subsection{Proof of Proposition \ref{prte_basic_kullback_mle}}

\begin{proof}[Proof of Proposition \ref{prte_basic_kullback_mle}]
First, we decompose the Kullback-Leibler divergence into a bias term and a
variance term using Expression (\ref{definition_kullback}).
$$
\mathbb{E}\left[2\mathcal{K}\left(t_i,s_i;\widehat{t}_{i,m_i},\widehat{s}_{i,m_i
}\right)\right] = 
\mathbb{E}\left[\log\frac{\widehat{s}_{i,m_i}}{s_i}+\frac{s_i+l_i(\widehat{t}_{i
,m_i},t_i)}{\widehat{s}_{i,m_i}}-1\right]\nonumber \ .$$
By definition, $\widehat{t}_{i,m_i}$ is the least-squares estimator of $t_i$
over the set of vectors of size $i-1$ whose support is included in $m_i$ and
$-X_{<i}t^*_{i,m_i}$ is the best predictor of $X_i$ given $X_{m_{i}}$. Hence,
the prediction error  $l_i(\widehat{t}_{i,m_i},t_{i})+s_i$ equals
$l_i(\widehat{t}_{i,m_i},t_{i,m_i})+s_{i,m_i}$ and it follows that
\begin{eqnarray}
 \lefteqn{\mathbb{E}\left[2\mathcal{K}\left(t_i,s_i;\widehat{t}_{i,m_i},\widehat
{s}_{i,m_i}\right)\right] =
2\mathcal{K}\left(t_i,s_i;t_{i,m_i},s_{i,m_i}\right)}\hspace{1cm} \nonumber\\ &
&\mbox{}\hspace{1cm} +
\mathbb{E}\left[\log\frac{\widehat{s}_{i,m_i}}{s_{i,m_i}}+
\frac{l_i\left(\widehat{t}_{i,m_i},t_{i,m_i}\right)}{\widehat{s}_{i,m_i}}
+\left(\frac{s_{i,m_i}}{\widehat{s}_{i,m_i}}-1\right)\right]\ .\label{decompo1}
\end{eqnarray}
Let us compute the expectation of these three last terms. Notice that
$n\widehat{s}_{i,m_i}/s_{i,m_i}=n\|\Pi_{m_i}^{\perp} {\bf X}_i\|_n^2/s_{i,m_i}$
follows  the distribution of a $\chi^2$ distribution with $n-|m_i|$ degrees of
freedom.
\begin{eqnarray}\label{esperance1}
 \mathbb{E}\left[\frac{s_{i,m_i}}{\widehat{s}_{i,m_i}}-1\right]=\mathbb{E}\left[
\frac{n}{\chi^2(n-|m_i|)}-1\right]=\frac{|m_i|+2}{n-|m_i|-2}\ ,
\end{eqnarray}
by Lemma  5 in \cite{baraud08}. Similarly, we compute the expectation of the
logarithm as follows:
\begin{eqnarray}
 \mathbb{E}\left[\log\frac{\widehat{s}_{i,m_i}}{s_{i,m_i}}\right]& = &
\mathbb{E}\left[\log\left(\frac{\chi^2(n-|m_i|)}{n}\right)\right]
 =  \Psi(n-|m_i|)+\log\left(\frac{n-|m_i|}{n}\right) \ , \label{esperance2}
\end{eqnarray}
by definition of the function $\Psi(.)$. The last term
$l_i(\widehat{t}_{i,m_i},t_{i,m_i})/\widehat{s}_{i,m_i}$ is slightly more
difficult to handle. Let us first decompose
$l_i(\widehat{t}_{i,m_i},t_{i,m_i})$:
\begin{eqnarray*}
 l_i(\widehat{t}_{i,m_i},t_{i,m_i})& = &
(t_{i,m_i}-\widehat{t}_{i,m_i})\Sigma_{m_i}(t_{i,m_i}-\widehat{t}_{i,m_i})^*\\ 
&=&(\boldsymbol{\epsilon}_i+\boldsymbol{\epsilon}_{i,m_i})^*{\bf X}_{m_i}({\bf
X}_{m_i}^*{\bf X}_{m_i})^{-1}\Sigma_{m_i}({\bf X}_{m_i}^*{\bf X}_{m_i})^{-1}{\bf
X}_{m_i}^*(\boldsymbol{\epsilon}_i+\boldsymbol{\epsilon}_{i,m_i})
 \ ,
\end{eqnarray*}
by Lemma \ref{lemme_basic} and definition of $\epsilon_{i,m_i}$. Observe that
$\epsilon_i+\epsilon_{i,m_i}$ is independent of $X_{m_i}$. Hence, conditionally
to ${\bf X}_{m_i}$, $l_i(\widehat{t}_{i,m_i},t_{i,m_i})$ only depends on
$\boldsymbol{\epsilon}_i+\boldsymbol{\epsilon}_{i,m_i}$ through its orthogonal
projection onto the space generated by $({\bf X}_j)_{j\in m_i}$. Meanwhile,
$\widehat{s}_{i,m_i}=\|\Pi^\perp_{m_i}(\boldsymbol{\epsilon}_i+\boldsymbol{
\epsilon}_{i,m_i})\|_n^2$ is the orthogonal projection of 
$(\boldsymbol{\epsilon}_i+\boldsymbol{\epsilon}_{i,m_i})$ along the same
subspace. Thus, $l_i(\widehat{t}_{i,m_i},t_{i,m_i})$ and $\widehat{s}_{i,m_i}$
are independent conditionally to ${\bf X}_{m_i}$. Moreover,
$\widehat{s}_{i,m_i}$ is independent of ${\bf X}_{i,m_i}$. Hence,
$l_i(\widehat{t}_{i,m_i},t_{i,m_i})$ and $\widehat{s}_{i,m_i}$ are independent.
Following the proof of Lemma 2.1 in \cite{verzelen_regression}, we observe that
$\mathbb{E}[l_i(\widehat{t}_{i,m_i},t_{i,m_i})]$ is the expectation of the trace
of an inverse Wishart $Wish^{-1}(|m_i|,n)$ times $s_{i,m_i}$. We then obtain
that
\begin{eqnarray}\label{esperance3}
 \mathbb{E}\left[\frac{l_i\left(\widehat{t}_{i,m_i},t_{i,m_i}\right)}{\widehat{s
}_{i,m_i}}\right]& = &
\mathbb{E}\left[\frac{Wish^{-1}(|m_i|,n)}{\chi^2(n-|m_i|)/n}\right] =
\frac{n|m_i|}{(n-|m_i|-1)(n-|m_i|-2)}\ ,
\end{eqnarray}
since $\mathbb{E}\left[Wish^{-1}(|m_i|,n)\right]=|m_i|/(n-|m_i|-1)$ by Von Rosen
\cite{rosen1988}. Gathering identities (\ref{esperance1}), (\ref{esperance2}),
and (\ref{esperance3}) with (\ref{decompo1}) yields the first result
(\ref{decomposition_biais_variance}). Let us now compute the function
$\Psi(.)$. 
\begin{lemma}\label{calcul_log_chi2}
 For any $d$ larger than 3, 
\begin{eqnarray*}
 -\frac{1}{d-2}\leq \Psi(d)\leq 0\  \text{ and } \ \Psi(d) = -\frac{1}{d} +
\mathcal{O}\left(\frac{1}{d^2}\right) \ .
\end{eqnarray*}
\end{lemma}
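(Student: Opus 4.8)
The plan is to reduce the quantity $\Psi(d)=\mathbb{E}[\log(\chi^2(d)/d)]$ to the digamma function $\psi=\Gamma'/\Gamma$ and then read off both the two-sided bounds and the asymptotics from the standard asymptotic expansion of $\psi$. First I would recall that $\chi^2(d)$ has a $\mathrm{Gamma}(d/2,2)$ distribution, so that $\mathbb{E}[X^s]=2^s\Gamma(d/2+s)/\Gamma(d/2)$ for $s>-d/2$; differentiating this analytic moment at $s=0$ yields $\mathbb{E}[\log\chi^2(d)]=\psi(d/2)+\log 2$. Subtracting $\log d$ gives the key identity $\Psi(d)=\psi(d/2)-\log(d/2)$, which turns the problem into estimating $\psi(x)-\log x$ at $x=d/2$.

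The upper bound $\Psi(d)\le 0$ is then immediate: either apply Jensen's inequality to the concave function $\log$, using $\mathbb{E}[\chi^2(d)/d]=1$, or use the classical inequality $\psi(x)<\log x-\tfrac{1}{2x}$, which in fact yields the sharper $\Psi(d)<-1/d$. For the remaining estimates I would invoke the asymptotic expansion $\psi(x)=\log x-\tfrac{1}{2x}-\tfrac{1}{12x^2}+\cdots$, whose Bernoulli-number tail is enveloping: the true value lies strictly between consecutive partial sums, and the error has the sign of the first omitted term. This gives the rigorous two-sided bound
\[
\log x-\frac{1}{2x}-\frac{1}{12x^2}<\psi(x)<\log x-\frac{1}{2x},\qquad x>0 .
\]
Setting $x=d/2$ produces $-\tfrac1d-\tfrac{1}{3d^2}<\Psi(d)<-\tfrac1d$, from which the asymptotic statement $\Psi(d)=-1/d+\mathcal{O}(1/d^2)$ is read off directly.

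It remains to deduce the stated lower bound $\Psi(d)\ge -1/(d-2)$ from $\Psi(d)>-\tfrac1d-\tfrac1{3d^2}$, which is a one-line algebraic check: clearing denominators reduces $-\tfrac1d-\tfrac1{3d^2}\ge-\tfrac1{d-2}$ to $3d^2\ge(3d+1)(d-2)=3d^2-5d-2$, i.e. to $5d+2\ge0$, always true. The hard part is obtaining a lower bound on $\psi$ that is sharp enough: the naive estimate $\psi(x)>\log x-1/x$ only gives $\Psi(d)>-2/d$, and since $2/d>1/(d-2)$ for $d>4$ this is too weak by a factor of two. One genuinely needs the second-order term $-1/(12x^2)$, so the real content is the enveloping property of the digamma asymptotic series (equivalently, a sufficiently sharp explicit lower bound for $\psi$); everything else is bookkeeping.
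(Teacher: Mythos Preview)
Your argument is correct. The identity $\Psi(d)=\psi(d/2)-\log(d/2)$ is derived cleanly, Jensen gives the upper bound, and the two-sided estimate $\log x-\tfrac{1}{2x}-\tfrac{1}{12x^2}<\psi(x)<\log x-\tfrac{1}{2x}$ together with your algebraic check yields both the lower bound $-1/(d-2)$ and the $-1/d+\mathcal{O}(1/d^2)$ asymptotics. The one place a referee might push back is the enveloping property itself: it is classical, but since (as you rightly observe) the cruder bound $\psi(x)>\log x-1/x$ is too weak by a factor of two, it is worth citing a source or noting that the sharp lower bound follows in one line from Binet's integral $\psi(x)=\log x-\tfrac{1}{2x}-2\int_0^\infty \frac{t\,dt}{(t^2+x^2)(e^{2\pi t}-1)}$ together with $\int_0^\infty \frac{t\,dt}{e^{2\pi t}-1}=\tfrac{1}{24}$.

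The paper itself defers this lemma entirely to its external technical appendix, so no direct comparison of approaches is possible from the source provided. Your route via the digamma function is in any case the natural one and almost certainly what the appendix does; an alternative elementary proof could exploit the recursion $\psi(x+1)=\psi(x)+1/x$ to reduce to small $x$, but your use of the asymptotic expansion is both shorter and sharper.
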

The proof is given in the technical Appendix~\cite{technical}.
Since $\log(1-d/n)$ is negative, we obtain the first upper bound on $R_{n,d}$.
For any positive number $x$, $\log(1+x)\leq x$ and consequently $\log(1-x)$ is
smaller than $-x/(1-x)$ for any $x$ such that $0<x<1$. It then follows that
$\Psi(n-d)+\log(1-d/n)\geq -(d+1)/(n-d-2)$ and $R_{n,d}\geq (d+1)/[2(n-d-2)]$.
Analogously, we obtain the expansion of $R_{n,d}$ when $d/n$ goes to $0$ thanks
to Lemma \ref{calcul_log_chi2} and the Taylor expansion of the logarithm.
\end{proof}

\subsection{Proof of the risk upper bounds}\label{section_preuve_oracle}

\subsubsection{Proof of the main theorem}
\begin{proof}[Proof of Theorem \ref{main_thrm}]

This result is based on a Kullback oracle inequality for all the estimators
$(\widetilde{t}_i,\widetilde{s}_i)$ with $1\leq i\leq p$. Consider an integer
$1\leq i\leq p$. ~\\~\\
\textit{Assumption} $(\boldsymbol{\mathbb{H}}^i_{K,\eta})$: Given $K>1$ and
$\eta>0$, the collection $\mathcal{M}$ and the number $\eta$ satisfy
\begin{eqnarray}\label{Hypothese_HK_i}
 \forall m_i\in\mathcal{M}_i\ ,\,\,\,\,\,\,
\frac{\left[1+\sqrt{2H_i(|m_i|)}\right]^2|m_i|}{n-|m_i|}\leq \eta<\eta(K)\ ,
\end{eqnarray}
where we recall that $\eta(K)$ is defined in Eq.(12) in
\cite{verzelen_regression}.\\

Obviously, Assumption $(\boldsymbol{\mathbb{H}}_{K,\eta})$ is equivalent to the
union of the assumptions $(\boldsymbol{\mathbb{H}}^i_{K,\eta})$.

\begin{prte}\label{oracle_kullback}
Let $K>1$ and $\eta<\eta(K)$. Assume that $n\geq n_0(K)$, that 
$(\boldsymbol{\mathbb{H}}^i_{K,\eta})$ holds, and that the penalty function is
lower bounded as follows  
\begin{eqnarray}\label{condi1_penalite}
pen_i(m)\geq K\frac{|m|}{n-|m|}\left(1+\sqrt{2H_i(|m|)}\right)^2
\, \, \, \text{for any $m\in \mathcal{M}_i$ and some $K>1$}\ .
\end{eqnarray}
Then, the penalized estimator $(\widetilde{t}_i,\widetilde{s}_i)$ satisfies
\begin{eqnarray*}
\mathbb{E}\left[\mathcal{K}\left(t_i,s_i;\widetilde{t}_i,\widetilde{s}
_i\right)\right]\leq
L_{K,\eta}\inf_{m_i\in\mathcal{M}_i}\left[\mathbb{E}\left[\mathcal{K}\left(t_i,
s_i;\widehat{t}_{i,m},\widehat{s}_{i,m}\right)\right]+pen_i(m)\right]
+\tau_n\left[t_i,s_i,K,\eta\right]\ .
\end{eqnarray*}
The remaining term $\tau_n(t_i,s_i,K,\eta)$ is defined by
\begin{eqnarray*}
\tau_n\left[t_i,s_i,K,\eta\right]:=
 \frac{L_K}{n} + L'(K,\eta)n^{5/2}\left[1+ \mathcal{K}\left(t_i,s_i;
0,1\right)\right]\exp\left[-nL_{K,\eta}\right]\ , 
\end{eqnarray*}
where $0$ stands here for the null vector of size $i-1$.
\end{prte}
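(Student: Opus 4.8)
The statement isolates a single coordinate $i$, so the plan is to treat the conditional regression (\ref{modele_regression}) as a penalized model selection problem over $\mathcal{M}_i$ with empirical contrast $\log(\widehat{s}_{i,m})$, and to convert the defining inequality of $\widehat{m}_i$ into a bound on the conditional Kullback risk. The two algebraic inputs are Lemma \ref{lemme_basic}, which yields $2\mathcal{K}(t_i,s_i;\widehat{t}_{i,m},\widehat{s}_{i,m}) = \log(\widehat{s}_{i,m}/s_i) + V_m$ with
\begin{eqnarray*}
V_m := \frac{s_{i,m} + l_i(\widehat{t}_{i,m},t_{i,m})}{\widehat{s}_{i,m}} - 1 \ ,
\end{eqnarray*}
together with the simplification $2\mathcal{K}(t_i,s_i;t_{i,m},s_{i,m}) = \log(s_{i,m}/s_i)$ valid for the projection model. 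Fixing an arbitrary competitor $m^{*}\in\mathcal{M}_i$ and inserting these identities into $\log(\widetilde{s}_i) + pen_i(\widehat{m}_i) \leq \log(\widehat{s}_{i,m^{*}}) + pen_i(m^{*})$, I would obtain, after the $\log s_i$ terms cancel,
\begin{eqnarray*}
2\mathcal{K}(t_i,s_i;\widetilde{t}_i,\widetilde{s}_i) \leq 2\mathcal{K}(t_i,s_i;t_{i,m^{*}},s_{i,m^{*}}) + \log\frac{\widehat{s}_{i,m^{*}}}{s_{i,m^{*}}} + pen_i(m^{*}) - pen_i(\widehat{m}_i) + V_{\widehat{m}_i} \ .
\end{eqnarray*}
Everything then hinges on controlling $V_{\widehat{m}_i}$ and the $\chi^2$ term $\log(\widehat{s}_{i,m^{*}}/s_{i,m^{*}})$.

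The core of the work is a uniform deviation bound for $V_m$ over $\mathcal{M}_i$. Writing $V_m$ as the sum of a centred inverse-$\chi^2$ piece $s_{i,m}/\widehat{s}_{i,m}-1$ and an estimation piece $l_i(\widehat{t}_{i,m},t_{i,m})/\widehat{s}_{i,m}$, I would use that $n\widehat{s}_{i,m}/s_{i,m}\sim\chi^2(n-|m|)$ and that, conditionally on ${\bf X}_m$, $l_i(\widehat{t}_{i,m},t_{i,m})/s_{i,m}$ is a normalised quadratic form in the residual noise, its expectation being $\mathbb{E}[\text{tr}(Wish^{-1}(|m|,n))]$ as in the proof of Proposition \ref{prte_basic_kullback_mle}. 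Laurent--Massart type deviation inequalities give, for a single $m$, fluctuations of order $\sqrt{|m|/n}$; to pass to the supremum over the $\exp(|m|H_i(|m|))$ models of dimension $|m|$ I would peel over the dimension $d=|m|$ and apply a union bound, which is exactly what turns the entropy into the term $\sqrt{2H_i(|m|)}$ and reproduces the shape $\frac{|m|}{n-|m|}[1+\sqrt{2H_i(|m|)}]^2$ of the penalty. Assumption $(\mathbb{H}^i_{K,\eta})$ keeps this quantity below $\eta<\eta(K)$, which both keeps $\widehat{s}_{i,m}$ bounded away from $0$ on the relevant event and guarantees the deviation bound carries the correct constants.

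The resulting inequality, valid on a ``good'' event $\mathcal{A}_i$ of probability at least $1-\exp(-nL_{K,\eta})$, reads schematically $V_m \leq pen_i(m) + \theta\,\mathcal{K}(t_i,s_i;\widehat{t}_{i,m},\widehat{s}_{i,m}) + L_K/n$ uniformly in $m$, with $\theta<1$. Two features make the constant $K>1$ (rather than $K>2$) sufficient: the deterministic part of $V_m$, of order $2|m|/n$, is split between the penalty, which carries $K|m|/n$, and the loss itself, because $V_m = 2\mathcal{K}(t_i,s_i;\widehat{t}_{i,m},\widehat{s}_{i,m}) - \log(\widehat{s}_{i,m}/s_i)$ and the negative log-term $-\log(\widehat{s}_{i,m}/s_{i,m})$ is again of order $|m|/n$; the residual surplus is a fraction $\theta$ of $\mathcal{K}(t_i,s_i;\widetilde{t}_i,\widetilde{s}_i)$, which I move to the left-hand side, leaving the factor $(K-1)$. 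Rearranging on $\mathcal{A}_i$, taking expectations, replacing $\mathbb{E}[\mathcal{K}(t_i,s_i;\widehat{t}_{i,m^{*}},\widehat{s}_{i,m^{*}})]$ by its bias--variance value through Proposition \ref{prte_basic_kullback_mle} and infimising over $m^{*}$ produces the announced bound with leading constant $L_{K,\eta}$ of order $(K-1)^{-1}$. On $\mathcal{A}_i^{c}$ I would bound the loss crudely and integrate by Cauchy--Schwarz against the exponentially small probability; the polynomial moments of $\widehat{s}_{i,m}^{-1}$ contribute the factor $n^{5/2}$ and the worst-case bias is of order $\mathcal{K}(t_i,s_i;0,1)$, which together give the remainder $L'(K,\eta)n^{5/2}[1+\mathcal{K}(t_i,s_i;0,1)]\exp[-nL_{K,\eta}]$, while the $L_K/n$ residual comes from the lower-order part of $R_{n,|m|}$.

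I expect the genuinely delicate step to be the uniform control of the ratio $l_i(\widehat{t}_{i,m},t_{i,m})/\widehat{s}_{i,m}$: numerator and denominator are dependent, the denominator can be small, and the supremum runs over exponentially many models, so the deviation inequality must be sharp enough, with the correct constant in front of $\sqrt{2H_i(d)}$, both to regenerate the penalty shape and to leave the strict margin that $K>1$ requires. This is precisely the Gaussian-design estimate developed in \cite{verzelen_regression}, which I would adapt rather than reprove from scratch.
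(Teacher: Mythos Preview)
Your proposal is correct and follows essentially the same route as the paper: start from the defining inequality of $\widehat{m}_i$, rewrite it in terms of the conditional Kullback loss via Lemma~\ref{lemme_basic}, absorb a fraction of $l(\widetilde{t},t)/\widetilde{s}$ into the left-hand side using uniform deviation bounds imported from \cite{verzelen_regression}, and handle the complementary bad event by H\"older's inequality and moment bounds. The paper implements your ``$V_m \le pen_i(m)+\theta\cdot 2\mathcal{K}+L_K/n$'' step through the explicit four-term decomposition of Lemma~\ref{lemme_principal_kullback} (the $R_1$--$R_4$ terms, with the Wishart eigenvalue appearing in $R_1$ and $R_3$), which is exactly the detailed version of the absorption argument you sketch; your identification of the delicate point---the uniform control of $l_i(\widehat{t}_{i,m},t_{i,m})/\widehat{s}_{i,m}$ via the Gaussian-design estimates of \cite{verzelen_regression}---matches the paper's Lemmas~\ref{concentration_R1} and~\ref{concentration_R3}.
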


Let us apply this property for any $i$ between $1$ and $p$. Then, we get an
upper bound for $\mathbb{E}[\mathcal{K}(\Omega;\widetilde{\Omega})]$ by applying
the chain rule as in Section \ref{section_decomposition}. The risk bound
(\ref{inegalite_oracle}) follows.
\end{proof}

\begin{proof}[Proof of Proposition \ref{oracle_kullback}]

The proof of this result is mainly inspired by ideas introduced in the proofs of
Th.3 in \cite{baraud08} and of Th.3.4 in \cite{verzelen_regression}. The case
$i=1$ is a consequence of Proposition \ref{prte_basic_kullback_mle} since
$|\mathcal{M}_1|=1$. Let us assume that $i$ is larger than one. For the sake of
clarity, we forget the subscripts $i$ in the remainder of the proof.\\

Let us introduce some new notations. First, $\langle .,.\rangle_n$ is the inner
product in $\mathbb{R}^n$ associated to the norm $\|.\|_n$. Let $m$ be any model
in the collection $\mathcal{M}$.\\

We shall use the constants $\kappa_1$, $\kappa_2$, and $\nu(K)$ as defined in
the proof of Th.3.4 in \cite{verzelen_regression}. We provide their expression
for completeness although they are not really of interest.
\begin{eqnarray*}
\kappa_1  & := & \frac{\sqrt{\frac{3}{K+2}}}{1-\sqrt{\eta}-\nu(K)}\ , \, \, \,
\, \, \, \, \, \, \, \, \,
\kappa_2  := 
\frac{(K-1)\left[1-\sqrt{\eta}\right]^2\left[1-\sqrt{\eta}-\nu(K)\right]^2}{16}
\wedge 1\ ,\\
\nu(K) &:= &\left(\frac{3}{K+2}\right)^{1/6}\wedge
\frac{1-\left(\frac{3}{K+2}\right)^{1/6}}{2}\ .
\end{eqnarray*}
Besides, we introduce the positive constant $\kappa_0$ as the largest number
that satisfies
$$\kappa_0\leq 1-\frac{2}{K+1}\text{ and } \frac{K+2}{3}\leq
(1-\kappa_0)\frac{K+1.5}{2.5} \ .$$
For clarity, the proof is split into six lemmas.

\begin{lemma}\label{lemme_principal_kullback}
 \begin{eqnarray*}
  2(1-\kappa_0)\mathcal{K}\left[t,s;\widetilde{t},\widetilde{s}\right] & \leq &
2\mathcal{K}\left[t,s;\widehat{t}_m,\widehat{s}_m\right] + (1-\kappa_0)pen(m)
+\frac{l(\widetilde{t},t)}{\widetilde{s}} \left[R_1(\widehat{m})\vee
(1-\kappa_2)(1-\kappa_0)\right] \\& +& R_2(m) +
\frac{s}{\widetilde{s}}R_3(\widehat{m})+ R_4(m,\widehat{m})\ ,
 \end{eqnarray*}
where for all model $m'\in \mathcal{M}$,
\begin{eqnarray*}
R_1(m') & := & \kappa_1+ 1-\kappa_0 -
  \frac{\|\Pi^{\perp}_{m'}\boldsymbol{\epsilon}_{{m'}}\|^2_n}{l(t_{m'},t)}
  + \kappa_2(1-\kappa_0)  \varphi_{\text{max}}\left[ n({\bf Z}^*_{{m'}}{\bf
Z}_{{m'}})^{-1}\right]
   \frac{\|\Pi_{m'} (\boldsymbol{\epsilon}
  +\boldsymbol{\epsilon}_{{m'}})\|^2_n}{l(t_{m'},t)+s}\ ,\\
  & - &K(1-\kappa_0)\left[1+\sqrt{2H(|m'|)}\right]^2\frac{|m'|}{n-|m'|}  
\frac{\|\Pi_{{m'}}^\perp (\boldsymbol{\epsilon}
  +\boldsymbol{\epsilon}_{{m'}})\|^2_n}{l(t_{m'},t)+s}\ ,\\
R_2(m) &:= & 
2\frac{\langle\Pi^\perp_{m}\boldsymbol{\epsilon},\Pi^\perp_{m}\boldsymbol{
\epsilon}_{m}\rangle_n}{\widehat{s}_{m}} +
\frac{\|\Pi^\perp_{m}\boldsymbol{\epsilon}_{m}\|_n^2-l(t_{m},t)}{\widehat{s}_{m}
}\ , \\
 R_{3}({m'}) & := &
\kappa_1^{-1}\frac{\langle\Pi^\perp_{{m'}}\boldsymbol{\epsilon},
\Pi^\perp_{{m'}}\boldsymbol{\epsilon}_{{m'}}\rangle_n^2}{sl(t_{m'},t)} + 
\kappa_2(1-\kappa_0)\varphi_{\text{max}}\left[ n({\bf Z}^*_{{m'}}{\bf
Z}_{{m'}})^{-1}\right]
   \frac{\|\Pi_{m'} (\boldsymbol{\epsilon}
  +\boldsymbol{\epsilon}_{{m'}})\|^2_n}{l(t_{m'},t)+s}\\ &+&
\frac{\|\Pi_{{m'}}\boldsymbol{\epsilon}\|_n^2}{s}
- K(1-\kappa_0)\left[1+\sqrt{2H(|m'|)}\right]^2\frac{|m'|}{n-|m'|}  
\frac{\|\Pi_{{m'}}^\perp
(\boldsymbol{\epsilon}+\boldsymbol{\epsilon}_{{m'}})\|^2_n}{l(t_{m'},t)+s}\ ,\\
R_{4}(m,{m'}) & := & \left(\|\boldsymbol{\epsilon}\|_n^2-s(1-\kappa_0)\right)
\left[\frac{1}{\widehat{s}_m} -\frac{1}{\widehat{s}_{m'}}\right]   \ .
\end{eqnarray*}
\end{lemma}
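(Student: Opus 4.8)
The plan is to convert the definition of $\widehat{m}$ into a single deterministic inequality and then to sort the resulting random fluctuations into the four remainders. Since $\widehat{m}$ minimises $\log(\widehat{s}_{m'})+pen(m')$ over the collection, for the fixed reference model $m$ one has $\log(\widetilde{s})\leq \log(\widehat{s}_m)+pen(m)-pen(\widehat{m})$. I would insert this into the closed form $2\mathcal{K}(t,s;\widetilde{t},\widetilde{s})=\log(\widetilde{s}/s)+s/\widetilde{s}-1+l(\widetilde{t},t)/\widetilde{s}$ of Lemma~\ref{lemme_basic}, and rewrite $\log(\widehat{s}_m/s)$ through the same identity applied to $m$, namely $\log(\widehat{s}_m/s)=2\mathcal{K}(t,s;\widehat{t}_m,\widehat{s}_m)+1-(s+l(\widehat{t}_m,t))/\widehat{s}_m$. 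After the constants cancel this yields the starting bound
\[
2\mathcal{K}(t,s;\widetilde{t},\widetilde{s})\leq 2\mathcal{K}(t,s;\widehat{t}_m,\widehat{s}_m)+pen(m)-pen(\widehat{m})+\frac{s}{\widetilde{s}}+\frac{l(\widetilde{t},t)}{\widetilde{s}}-\frac{s+l(\widehat{t}_m,t)}{\widehat{s}_m},
\]
in which the last three ratios are the only random quantities left to process.

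Next I would expand every ratio geometrically. For each model $m'$ I use the prediction-error identity of Proposition~\ref{prte_basic_kullback_mle}, $s+l(\widehat{t}_{m'},t)=s_{m'}+l(\widehat{t}_{m'},t_{m'})$ with $s_{m'}=s+l(t_{m'},t)$, the orthogonal decomposition $\widehat{s}_{m'}=\|\Pi^\perp_{m'}\boldsymbol{\epsilon}\|_n^2+2\langle\Pi^\perp_{m'}\boldsymbol{\epsilon},\Pi^\perp_{m'}\boldsymbol{\epsilon}_{m'}\rangle_n+\|\Pi^\perp_{m'}\boldsymbol{\epsilon}_{m'}\|_n^2$, and $\|\Pi^\perp_{m'}\boldsymbol{\epsilon}\|_n^2=\|\boldsymbol{\epsilon}\|_n^2-\|\Pi_{m'}\boldsymbol{\epsilon}\|_n^2$. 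The estimation error $l(\widehat{t}_{m'},t_{m'})$ is dominated through its inverse-Wishart representation from the same proof, giving $l(\widehat{t}_{m'},t_{m'})\leq\varphi_{\text{max}}[n({\bf Z}^*_{m'}{\bf Z}_{m'})^{-1}]\,\|\Pi_{m'}(\boldsymbol{\epsilon}+\boldsymbol{\epsilon}_{m'})\|_n^2$, which is exactly the quantity weighted by $\kappa_2(1-\kappa_0)$ in $R_1$ and $R_3$. Carrying these substitutions through the fixed-model ratio makes $R_2(m)$ appear, which I would recognise via the identity $R_2(m)=1-(\|\Pi^\perp_m\boldsymbol{\epsilon}\|_n^2+l(t_m,t))/\widehat{s}_m$; the remaining pieces of that ratio are either non-positive and discarded or carry a $\|\boldsymbol{\epsilon}\|_n^2$ that is deferred to $R_4$.

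Then I would perform the weighting that produces the rest of the structure. The cross term $\langle\Pi^\perp_{m'}\boldsymbol{\epsilon},\Pi^\perp_{m'}\boldsymbol{\epsilon}_{m'}\rangle_n$ attached to $\widehat{m}$ is split by a weighted Young inequality $2ab\leq\kappa_1 a^2+\kappa_1^{-1}b^2$, one half merging into the factor in front of $l(\widetilde{t},t)/\widetilde{s}$ and the other half recorded in $R_3$. The penalty lower bound (\ref{condi1_penalite}) is used to trade $-pen(\widehat{m})$ for the negative terms $-K(1-\kappa_0)[1+\sqrt{2H(|m'|)}]^2\tfrac{|m'|}{n-|m'|}\,\|\Pi^\perp_{m'}(\boldsymbol{\epsilon}+\boldsymbol{\epsilon}_{m'})\|_n^2/s_{m'}$ that sit inside $R_1$ and $R_3$. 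Finally $\kappa_0$ is introduced when re-centering the noise energy $\|\boldsymbol{\epsilon}\|_n^2$, which is common to $\widehat{s}_m$ and $\widetilde{s}$: this simultaneously generates $R_4(m,\widehat{m})=(\|\boldsymbol{\epsilon}\|_n^2-s(1-\kappa_0))[\widehat{s}_m^{-1}-\widetilde{s}^{-1}]$ and leaves the factor $(1-\kappa_0)$ on the target Kullback term and on $pen(m)$. Collecting all pieces, with $\kappa_1,\kappa_2,\nu(K)$ as in Th.3.4 of \cite{verzelen_regression}, gives the stated inequality, the coefficient of $l(\widetilde{t},t)/\widetilde{s}$ being $R_1(\widehat{m})\vee(1-\kappa_2)(1-\kappa_0)$.

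The main obstacle is the bookkeeping of the weights rather than any single estimate. They must be tuned so that the coefficient $R_1(\widehat{m})\vee(1-\kappa_2)(1-\kappa_0)$ of $l(\widetilde{t},t)/\widetilde{s}$ can later be forced below $(1-\kappa_0)$ on the high-probability event where the normalised Gram matrices $n({\bf Z}^*_{m'}{\bf Z}_{m'})^{-1}$ concentrate; only then may this term be pushed back into the leading Kullback term to close the oracle inequality in the subsequent lemmas, which is why its explicit coefficient, and not a crude bound, has to be kept at this stage. The delicate point is to arrange that every fluctuation tied to the random model $\widehat{m}$ appears only in the form $\|\Pi_{m'}(\boldsymbol{\epsilon}+\boldsymbol{\epsilon}_{m'})\|_n^2$, $\|\Pi^\perp_{m'}(\boldsymbol{\epsilon}+\boldsymbol{\epsilon}_{m'})\|_n^2$ or $\varphi_{\text{max}}[n({\bf Z}^*_{m'}{\bf Z}_{m'})^{-1}]$ normalised by $s_{m'}$, so that it can be controlled uniformly over $\mathcal{M}_i$ through the complexity $H_i$; the extra difficulty over plain Gaussian regression is the simultaneous presence of the within-model noise $\boldsymbol{\epsilon}_{m'}$ alongside the irreducible noise $\boldsymbol{\epsilon}$, which couples the two in every cross term.
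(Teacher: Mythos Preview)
Your proposal is correct and follows essentially the same route the paper takes: start from the selection inequality $\log(\widetilde{s})+pen(\widehat{m})\leq\log(\widehat{s}_m)+pen(m)$, expand both Kullback terms via Lemma~\ref{lemme_basic}, decompose $\widehat{s}_{m'}$ and $l(\widehat{t}_{m'},t_{m'})$ through $\boldsymbol{\epsilon},\boldsymbol{\epsilon}_{m'}$ and the inverse Wishart bound, split the cross term by a weighted Young inequality with parameter $\kappa_1$, absorb $-pen(\widehat{m})$ via (\ref{condi1_penalite}), and re-center $\|\boldsymbol{\epsilon}\|_n^2$ around $s(1-\kappa_0)$ to produce $R_4$ and the factor $(1-\kappa_0)$. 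This is precisely the computation the paper defers to its technical appendix, in the spirit of Th.~3 in \cite{baraud08} and Th.~3.4 in \cite{verzelen_regression}; your identification of the ``bookkeeping'' as the only real difficulty is accurate.
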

This lemma gives a decomposition of the relevant terms that we have to bound.
See~\cite{technical} Sect.1.1 for a detailed computation.
In the next four lemmas, we bound each of these terms.

\begin{lemma}\label{concentration_R1}
Let us assume that $n\geq n_0(K)$, where $n_0(K)$ is defined in the proof. There
exists an event $\mathbb{B}_1$ of probability larger than $1-
L_Kn\exp\left[-nL'(K,\eta)\right]$ with $L'(K,\eta)>0$ such that
$$ R_1(\widehat{m})\mathbf{1}_{\mathbb{B}_1}\leq v(K,\eta)(1-\kappa_0)\ ,$$
where $v(K,\eta)$ is a positive constant (strictly) smaller than $1$.
\end{lemma}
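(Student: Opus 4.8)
The plan is to bound $R_1(m')$ \emph{uniformly} over all $m'\in\mathcal{M}_i$, so that the resulting deterministic bound applies in particular to the data-dependent choice $m'=\widehat{m}$. The starting observation is that each of the random functionals appearing in $R_1(m')$ is, after conditioning on ${\bf X}_{m'}$, a normalized $\chi^2$ or Wishart quantity whose expectation is computable exactly as in Proposition \ref{prte_basic_kullback_mle}. Indeed, since $\epsilon_{m'}$ and $\epsilon$ are independent of $X_{m'}$, conditionally on ${\bf X}_{m'}$ the variable $n\|\Pi^{\perp}_{m'}\boldsymbol{\epsilon}_{m'}\|^2_n/l(t_{m'},t)$ is $\chi^2(n-|m'|)$, the variable $n\|\Pi_{m'}(\boldsymbol{\epsilon}+\boldsymbol{\epsilon}_{m'})\|^2_n/(l(t_{m'},t)+s)$ is $\chi^2(|m'|)$, and $n\|\Pi^{\perp}_{m'}(\boldsymbol{\epsilon}+\boldsymbol{\epsilon}_{m'})\|^2_n/(l(t_{m'},t)+s)$ is $\chi^2(n-|m'|)$; finally ${\bf Z}^*_{m'}{\bf Z}_{m'}$ is a standard Wishart matrix, so $\varphi_{\text{max}}[n({\bf Z}^*_{m'}{\bf Z}_{m'})^{-1}]=n/\varphi_{\text{min}}({\bf Z}^*_{m'}{\bf Z}_{m'})$ concentrates near $1$ when $|m'|\ll n$.

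Next I would apply deviation inequalities to each term, as done in \cite{baraud08} and \cite{verzelen_regression}. For the $\chi^2$ functionals I use the classical Laurent--Massart bounds, controlling the lower tails of $\|\Pi^{\perp}_{m'}\boldsymbol{\epsilon}_{m'}\|^2_n$ and $\|\Pi^{\perp}_{m'}(\boldsymbol{\epsilon}+\boldsymbol{\epsilon}_{m'})\|^2_n$ (whose normalized means are $\tfrac{n-|m'|}{n}$) and the upper tail of $\|\Pi_{m'}(\boldsymbol{\epsilon}+\boldsymbol{\epsilon}_{m'})\|^2_n$ (normalized mean $\tfrac{|m'|}{n}$); for the smallest eigenvalue of the Wishart matrix I use Gaussian concentration of singular values, giving $\varphi_{\text{max}}[n({\bf Z}^*_{m'}{\bf Z}_{m'})^{-1}]\le (1-\sqrt{|m'|/n}-\delta)^{-2}$ on a large event. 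In each inequality I take the free deviation parameter of the form $x_d\asymp |m'|(1+\sqrt{2H(|m'|)})^2 + nL''(K,\eta)$, so that the deviations of the terms with normalized mean $\tfrac{|m'|}{n}$ (resp. $\tfrac{n-|m'|}{n}$) are of order at most $\tfrac{|m'|(1+\sqrt{2H(|m'|)})^2}{n}$.

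Assembling these on the intersection event, and writing $s_{m'}:=l(t_{m'},t)+s$, the deterministic part of $R_1(m')$ reduces to $\kappa_1-\kappa_0+\tfrac{|m'|}{n}\bigl[1+\kappa_2(1-\kappa_0)-K(1-\kappa_0)(1+\sqrt{2H(|m'|)})^2\bigr]$, up to the Wishart factor being close to one. The role of the subtracted penalty term is precisely that its coefficient $-K(1-\kappa_0)(1+\sqrt{2H(|m'|)})^2\tfrac{|m'|}{n-|m'|}$, multiplied by the concentrated value $\approx\tfrac{n-|m'|}{n}$ of $\|\Pi^{\perp}_{m'}(\boldsymbol{\epsilon}+\boldsymbol{\epsilon}_{m'})\|^2_n/s_{m'}$, yields a negative margin proportional to $(1+\sqrt{2H(|m'|)})^2\tfrac{|m'|}{n}$ that dominates both the positive $\kappa_2$-term (since $\kappa_2\le 1<K$) and all the deviation contributions, the latter being of order at most $\tfrac{|m'|(1+\sqrt{2H(|m'|)})^2}{n}$. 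Using $\eta<\eta(K)$ together with the definitions of $\kappa_1,\kappa_2,\nu(K),\kappa_0$ inherited from \cite{verzelen_regression} — calibrated so that $\kappa_1<1$ and the $\tfrac{|m'|}{n}$-coefficient stays non-positive — one gets $R_1(m')\le v(K,\eta)(1-\kappa_0)$ with some $v(K,\eta)<1$ (the leading constant $\kappa_1-\kappa_0$ being strictly below $1-\kappa_0$, enlarged by a vanishing amount to absorb residual $\tfrac{|m'|}{n}$ terms), provided $n\ge n_0(K)$ so that the Wishart bound and the Taylor remainders are controlled.

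Finally I take the union bound. Grouping models by dimension $d=|m'|$ there are $\exp(d H(d))$ of them, each bad event having probability at most $L_K\exp(-x_d)$; the choice $x_d\gtrsim dH(d)+nL''(K,\eta)$ makes $\exp(dH(d))\cdot L_K\exp(-x_d)\le L_K\exp(-nL'(K,\eta))$, and summing over the at most $n$ admissible dimensions yields the announced probability $1-L_Kn\exp(-nL'(K,\eta))$. The main obstacle is exactly the constant bookkeeping of the third paragraph: one must choose the deviation parameters so that the penalty's $(1+\sqrt{2H(\cdot)})^2$ factor \emph{exactly} absorbs the $H$-dependence created by the union bound, while keeping the leading constant $\kappa_1-\kappa_0$ strictly below $1-\kappa_0$. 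This is where Assumption $(\boldsymbol{\mathbb{H}}^i_{K,\eta})$ and the condition $\eta<\eta(K)$ enter, and it is the delicate part transposed from the Gaussian-design regression analysis of \cite{verzelen_regression}.
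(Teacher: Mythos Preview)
Your proposal is correct and follows essentially the same route as the paper's proof: identify the $\chi^2$ and Wishart structure of each term in $R_1(m')$, apply the corresponding deviation inequalities with levels calibrated by $|m'|(1+\sqrt{2H(|m'|)})^2$ so that the penalty term absorbs the cost of the union bound over $\mathcal{M}_i$, and exploit the definitions of $\kappa_0,\kappa_1,\kappa_2,\nu(K)$ together with $(\mathbb{H}^i_{K,\eta})$ to force the residual constant strictly below $(1-\kappa_0)$. This is precisely the transposition of Lemmas~7.10--7.11 of \cite{verzelen_regression} that the paper invokes, with the detailed bookkeeping deferred to the technical appendix.
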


\begin{lemma}\label{concentration_R3}
Assume that $n\geq n_0(K)$. Then, under the event $\mathbb{B}_1$ defined in the
proof of Lemma \ref{concentration_R1}, 
\begin{eqnarray*}
\mathbb{E}\left[\frac{s}{\widetilde{s}}R_3(\widehat{m})\mathbf{1}_{\mathbb{B}_1}
\right] \leq \frac{L_{K,\eta}}{n}\ .
\end{eqnarray*}
\end{lemma}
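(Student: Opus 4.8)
The plan is to exploit the structure of $R_3(m')$, which consists of three nonnegative fluctuation terms and one nonpositive term built from the penalty, the latter being designed to absorb the former up to an $O(1/n)$ remainder. First I would record what working on $\mathbb{B}_1$ buys us: the event $\mathbb{B}_1$ from the proof of Lemma \ref{concentration_R1} is built to control, uniformly over $m'\in\mathcal{M}$, the design factor $\varphi_{\text{max}}[n({\bf Z}^*_{m'}{\bf Z}_{m'})^{-1}]\leq C_1(K,\eta)$ (a Wishart/smallest-singular-value bound) and the $\chi^2$-ratios $\widehat{s}_{m'}/s_{m'}=\|\Pi^\perp_{m'}(\boldsymbol{\epsilon}+\boldsymbol{\epsilon}_{m'})\|_n^2/s_{m'}$ from below. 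Since $s_{m'}=l(t_{m'},t)+s\geq s$ and $|m'|\leq \eta n/(1+\eta)$, this yields $s/\widetilde{s}=s/\widehat{s}_{\widehat{m}}\leq C_2(K,\eta)$ on $\mathbb{B}_1$. Because $\tfrac{s}{\widetilde{s}}>0$, it follows that $\tfrac{s}{\widetilde{s}}R_3(\widehat{m})\mathbf{1}_{\mathbb{B}_1}\leq C_2(K,\eta)\,(R_3(\widehat{m}))_+\mathbf{1}_{\mathbb{B}_1}\leq C_2(K,\eta)\sum_{m'\in\mathcal{M}}(R_3(m'))_+\mathbf{1}_{\mathbb{B}_1}$, so it suffices to bound $\sum_{m'}\mathbb{E}[(R_3(m'))_+\mathbf{1}_{\mathbb{B}_1}]$ by $L_{K,\eta}/n$.

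Next I would split the nonpositive term $T_4(m'):=-K(1-\kappa_0)[1+\sqrt{2H(|m'|)}]^2\tfrac{|m'|}{n-|m'|}\,\widehat{s}_{m'}/s_{m'}$ into three equal pieces and attach one to each fluctuation term, using $(R_3)_+\leq (\tfrac13 T_4+\text{term}_1)_++(\tfrac13 T_4+\text{term}_2)_++(\tfrac13 T_4+\text{term}_3)_+$. Each fluctuation term reduces to a standard object after conditioning: conditionally on the design ${\bf X}_{m'}$ one has $n\|\Pi_{m'}\boldsymbol{\epsilon}\|_n^2/s\sim\chi^2(|m'|)$ and $n\|\Pi_{m'}(\boldsymbol{\epsilon}+\boldsymbol{\epsilon}_{m'})\|_n^2/s_{m'}\sim\chi^2(|m'|)$, since $\boldsymbol{\epsilon}$ and $\boldsymbol{\epsilon}+\boldsymbol{\epsilon}_{m'}$ are both independent of ${\bf X}_{m'}$; conditioning further on $\boldsymbol{\epsilon}_{m'}$ turns $\langle\Pi^\perp_{m'}\boldsymbol{\epsilon},\Pi^\perp_{m'}\boldsymbol{\epsilon}_{m'}\rangle_n^2/(s\,l(t_{m'},t))$ into $\tfrac1n\big(\|\Pi^\perp_{m'}\boldsymbol{\epsilon}_{m'}\|_n^2/l(t_{m'},t)\big)\chi^2(1)$, whose prefactor is bounded on $\mathbb{B}_1$. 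Against these, $|T_4(m')|$ is at least $K(1-\kappa_0)(1-o(1))[1+\sqrt{2H(|m'|)}]^2|m'|/n$ on $\mathbb{B}_1$.

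I would then invoke the Laurent–Massart chi-square deviation bounds (as used in \cite{baraud08,verzelen_regression}): for fixed $m'$ of size $d$, $\chi^2(d)\leq d+2\sqrt{du}+2u$ with probability at least $1-e^{-u}$. The key algebraic match is $[1+\sqrt{2H}]^2 d=d+2\sqrt{2}\sqrt{d\cdot dH}+2dH$, which coincides with this tail at level $u=dH(d)$. Choosing $\kappa_0,\kappa_1,\kappa_2,\nu(K)$ as imported from the proof of Th.~3.4 in \cite{verzelen_regression} so that each effective coefficient is strictly below $K(1-\kappa_0)$ with a margin $\delta>0$ forces the positive part of each paired difference to have, at level $x/n$, a tail of order $e^{-dH(d)-\delta d-cx}$. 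Summing over the at most $\exp(dH(d))$ models of size $d$ (Definition \ref{complexite_modele}) cancels the combinatorial factor, the surviving $e^{-\delta d}$ makes the sum over $d$ converge, and integrating the $e^{-cx}$ tail produces the $1/n$ scale; collecting the three contributions gives $\sum_{m'}\mathbb{E}[(R_3(m'))_+\mathbf{1}_{\mathbb{B}_1}]\leq L_{K,\eta}/n$.

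The main obstacle is the constant bookkeeping rather than any single inequality: one must verify simultaneously that $K(1-\kappa_0)$ strictly dominates the effective coefficient $\kappa_2(1-\kappa_0)C_1(K,\eta)$ of the $\varphi_{\text{max}}$ term, the cross-term constant $\kappa_1^{-1}$ (tempered by the bounded $\chi^2(1)$ prefactor), and the unit coefficient of the $\|\Pi_{m'}\boldsymbol{\epsilon}\|_n^2/s$ term, with a margin $\delta$ large enough to both cancel $e^{dH(d)}$ and ensure summability over $d$. This is exactly where the precise definitions of $\kappa_0,\kappa_1,\kappa_2,\nu(K)$ and the requirement $n\geq n_0(K)$ are used, and where the argument parallels the Gaussian-regression case of \cite{verzelen_regression}.
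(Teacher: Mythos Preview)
Your proposal follows essentially the same route the paper takes: the paper states that the proof of Lemma~\ref{concentration_R3} is ``analogous to Lemmas~7.10 and~7.11 in \cite{verzelen_regression}'' and that ``the main tools are deviation inequalities of $\chi^2$ random variables and of the largest eigenvalue of a Wishart matrix,'' which is exactly your plan (bound $s/\widetilde{s}$ on $\mathbb{B}_1$, pass to $\sum_{m'}(R_3(m'))_+$, pair each fluctuation term with a share of the penalty, and use Laurent--Massart together with the match between $[1+\sqrt{2H}]^2d$ and the $\chi^2$ tail at level $dH$).

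There is, however, one internal inconsistency you should fix. You propose to split the penalty term $T_4$ into \emph{three equal pieces}, but then you require ``each effective coefficient is strictly below $K(1-\kappa_0)$.'' These are not compatible: with an equal split the piece available against $\|\Pi_{m'}\boldsymbol{\epsilon}\|_n^2/s$ has coefficient $K(1-\kappa_0)/3$, and for $K$ close to $1$ one has $K(1-\kappa_0)\le 2K/(K+1)<2$, so $K(1-\kappa_0)/3<1$ and the $\chi^2(|m'|)$ term is not dominated (the positive part then has mean of order $|m'|/n$, and summing over all models of each size blows up). The correct bookkeeping---and what the constants $\kappa_0,\kappa_1,\kappa_2,\nu(K)$ imported from \cite{verzelen_regression} are designed for---is an \emph{unequal} allocation: most of $T_4$ goes against the $\|\Pi_{m'}\boldsymbol{\epsilon}\|_n^2/s$ term (since $K(1-\kappa_0)\ge 2K/(K+1)>1$ suffices there), a small fraction (commensurate with $\kappa_2$) goes against the Wishart term, and the cross term, being of $\chi^2(1)/n$ type, needs only a share whose product with $\kappa_1$ exceeds the combinatorial weight. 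You already flag the constant bookkeeping as the main obstacle; just drop ``equal'' and make the allocation explicit.
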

These two upper bounds are at the heart of the proof. The sketch of their proofs
is analogous to Lemmas 7.10 and 7.11 in \cite{verzelen_regression}. The main
tools are deviation inequalities of $\chi^2$ random variables and of the largest
eigenvalue of a Wishart matrix. See~\cite{technical} Sect.1.2 and 1.3 for
detailed proofs.\\

Since $l(\widetilde{t},t)/\widetilde{s}$ is smaller than
$2\mathcal{K}\left[t,s;\widetilde{t},\widetilde{s}\right]$, it follows that
\begin{eqnarray*}
2\mathbb{E}\left[\mathcal{K}\left(t,s;\widetilde{t},\widetilde{s}\right)\mathbf{
1}_{\mathbb{B}_1}\right] \leq
L_{K,\eta}\left\{2\mathbb{E}\left[\mathcal{K}\left(t,s;\widehat{t}_m,\widehat{s}
_m\right)\right] + pen(m)+ \mathbb{E}\left[(R_2(m)+
R_4(m,\widehat{m}))\mathbf{1}_{\mathbb{B}_1}\right]\right\}\ .
\end{eqnarray*}

\begin{lemma}\label{concentration_R2}
Assume that $n\geq n_0(K)$. Considering the event $\mathbb{B}_1$ defined in
Lemma \ref{concentration_R1}, we bound $R_2(m)$ by 
\begin{eqnarray*} 
\mathbb{E}\left[R_2(m)\mathbf{1}_{\mathbb{B}_1}\right] \leq \frac{
L_{K,\eta}}{n}\ .
\end{eqnarray*}
\end{lemma}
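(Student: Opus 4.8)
The plan is to reduce $R_2(m)$ to a single ratio of correlated quadratic forms in the noise vectors, compute its expectation exactly, and then dispose of the indicator $\mathbf{1}_{\mathbb{B}_1}$ by a crude second-moment argument. Throughout, $m$ is a \emph{fixed} model, so no union bound over the collection is needed; I abbreviate $\ell:=l(t_m,t)$, $s:=s_i$, $d:=|m|$, $k:=n-d$, and recall $s_m=\ell+s$. First I would rewrite $R_2(m)$. Using $\widehat{s}_m=\|\Pi^\perp_m(\boldsymbol{\epsilon}+\boldsymbol{\epsilon}_m)\|_n^2$ from Lemma~\ref{lemme_basic} and expanding $\|\Pi^\perp_m(\boldsymbol{\epsilon}+\boldsymbol{\epsilon}_m)\|_n^2=\|\Pi^\perp_m\boldsymbol{\epsilon}\|_n^2+2\langle\Pi^\perp_m\boldsymbol{\epsilon},\Pi^\perp_m\boldsymbol{\epsilon}_m\rangle_n+\|\Pi^\perp_m\boldsymbol{\epsilon}_m\|_n^2$, the numerator of $R_2(m)$ is exactly $\widehat{s}_m-\|\Pi^\perp_m\boldsymbol{\epsilon}\|_n^2-\ell$, so that
\[
R_2(m)=1-\frac{\|\Pi^\perp_m\boldsymbol{\epsilon}\|_n^2+\ell}{\widehat{s}_m}.
\]
In particular $R_2(m)\le 1$ almost surely, which already controls its positive part grossly.

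Next I would compute $\mathbb{E}[R_2(m)]$ exactly. Conditioning on ${\bf X}_m$, the vector $w:=\boldsymbol{\epsilon}+\boldsymbol{\epsilon}_m$ is $\mathcal{N}(0,s_m I_n)$ and is independent of ${\bf X}_m$ (the independence of $\boldsymbol{\epsilon}$, $\boldsymbol{\epsilon}_m$ and ${\bf X}_m$ is recalled after \eqref{definition_epsilon_m}). I would decompose $\boldsymbol{\epsilon}=\tfrac{s}{s_m}w+\xi$, where $\xi\sim\mathcal{N}(0,\tfrac{s\ell}{s_m}I_n)$ is independent of $w$. Plugging this into the ratio, the cross term $\langle\Pi^\perp_m w,\Pi^\perp_m\xi\rangle_n$ has conditional mean zero, and $\|\Pi^\perp_m\xi\|_n^2$ is independent of $\widehat{s}_m=\|\Pi^\perp_m w\|_n^2$; using $n\widehat{s}_m/s_m\sim\chi^2(k)$ together with $\mathbb{E}[1/\chi^2(k)]=1/(k-2)$ and $\mathbb{E}[\|\Pi^\perp_m\xi\|_n^2]=\tfrac{k}{n}\tfrac{s\ell}{s_m}$, a short computation gives
\[
\mathbb{E}\Big[\frac{\|\Pi^\perp_m\boldsymbol{\epsilon}\|_n^2+\ell}{\widehat{s}_m}\Big]=1+\frac{\ell s(d+4)+\ell^2(d+2)}{s_m^2(k-2)}\ge 1,
\]
hence $\mathbb{E}[R_2(m)]\le 0$. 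Note this bound is scale-free: only the ratios $s/s_m,\ell/s_m\le 1$ enter, so there is no hidden dependence on $\Omega$.

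Finally I would remove the indicator. Writing $\mathbb{E}[R_2(m)\mathbf{1}_{\mathbb{B}_1}]=\mathbb{E}[R_2(m)]-\mathbb{E}[R_2(m)\mathbf{1}_{\mathbb{B}_1^c}]\le\mathbb{E}[(-R_2(m))\mathbf{1}_{\mathbb{B}_1^c}]$ and bounding $-R_2(m)\le(\|\Pi^\perp_m\boldsymbol{\epsilon}\|_n^2+\ell)/\widehat{s}_m$, Cauchy--Schwarz yields
\[
\mathbb{E}[R_2(m)\mathbf{1}_{\mathbb{B}_1}]\le\Big(\mathbb{E}\big[\big(\tfrac{\|\Pi^\perp_m\boldsymbol{\epsilon}\|_n^2+\ell}{\widehat{s}_m}\big)^2\big]\Big)^{1/2}\,\mathbb{P}(\mathbb{B}_1^c)^{1/2}.
\]
The second moment is at most polynomial in $n$: splitting $\|\Pi^\perp_m\boldsymbol{\epsilon}\|_n^2/\widehat{s}_m\le 2+2\|\Pi^\perp_m\xi\|_n^2/\widehat{s}_m$ and $\ell/\widehat{s}_m$, each piece is a bounded multiple of a ratio or inverse of \emph{independent} $\chi^2(k)$ variables, whose second moments are controlled by $\mathbb{E}[1/\chi^2(k)^2]=1/[(k-2)(k-4)]$ and $\mathbb{E}[\chi^2(k)^2]=k(k+2)$ (finite since $k=n-d$ is of order $n$ for $n\ge n_0(K)$). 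Since $\mathbb{P}(\mathbb{B}_1^c)\le L_K n\exp[-nL'(K,\eta)]$ by Lemma~\ref{concentration_R1}, the exponential factor dominates the polynomial prefactor, so the right-hand side is bounded by $L_{K,\eta}/n$, which is the claim.

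The main obstacle is the exact expectation computation in the second step: one must carry out the conditional decomposition of $\boldsymbol{\epsilon}$ along $w$ cleanly so that the cross term vanishes and the surviving $\chi^2$ moments recombine into a quantity that is provably $\ge 1$, equivalently $\mathbb{E}[R_2(m)]\le 0$. Everything else---the rewriting, the polynomial moment bound, and the Cauchy--Schwarz step---is routine once that identity is in hand.
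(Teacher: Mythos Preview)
Your proof is correct. The rewriting $R_2(m)=1-(\|\Pi^\perp_m\boldsymbol{\epsilon}\|_n^2+\ell)/\widehat{s}_m$ is valid, the exact expectation computation via the decomposition $\boldsymbol{\epsilon}=\tfrac{s}{s_m}w+\xi$ checks out (your formula $1+\tfrac{\ell s(d+4)+\ell^2(d+2)}{s_m^2(k-2)}$ is right, so indeed $\mathbb{E}[R_2(m)]\le 0$), and the Cauchy--Schwarz disposal of $\mathbf{1}_{\mathbb{B}_1^c}$ is sound since the second moment of the ratio is bounded by a constant depending only on $K,\eta$ (using $k=n-d\ge n/(1+\eta)$ under $(\mathbb{H}_{K,\eta})$ and $n\ge n_0(K)$ to guarantee $k\ge 5$).

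The paper itself defers the proof of this lemma entirely to its technical appendix (Sect.~1.4 of~\cite{technical}), so no direct comparison of strategies is possible from the text provided. Your argument is self-contained and does not rely on any structure of the event $\mathbb{B}_1$ beyond the probability bound from Lemma~\ref{concentration_R1}, which is all that is needed here since $m$ is fixed. The key identity $\mathbb{E}[R_2(m)]\le 0$ is a pleasant simplification: it reduces the whole lemma to a tail estimate on $\mathbb{B}_1^c$, and the exponential smallness of $\mathbb{P}(\mathbb{B}_1^c)$ absorbs any polynomial prefactor with room to spare.
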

 See~\cite{technical} Sect.1.4 for a detailed proof.

\begin{lemma}\label{concentration_R4}
Assume that $n\geq n_0(K)$.
Considering the event $\mathbb{B}_1$ defined in Lemma \ref{concentration_R1}, we
bound $R_4(m)$ by
\begin{eqnarray*}
 \mathbb{E}\left[R_4(m,\widehat{m})\mathbf{1}_{\mathbb{B}_1}\right] \leq  L
pen(m) + n\exp\left[-nL_K\right]\ .
\end{eqnarray*}
\end{lemma}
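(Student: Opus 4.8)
The plan is to exploit the definition of the selected model, which minimises $\log\widehat s_{m'}+pen(m')$ over $m'\in\mathcal M$ (coordinate $i$ being fixed and its subscript dropped), and to split the expectation according to the sign of the common factor $A:=\|\boldsymbol\epsilon\|_n^2-s(1-\kappa_0)$. Writing $R_4(m,\widehat m)=A/\widehat s_m-A/\widehat s_{\widehat m}$ and denoting by $A_+$ the positive part of $A$, I treat the dominant event $\{A\geq 0\}$ and the exceptional event $\{A<0\}$ separately.

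On $\{A\geq 0\}$, the optimality of $\widehat m$ gives $\log\widehat s_{\widehat m}+pen(\widehat m)\leq\log\widehat s_m+pen(m)$, hence $\widehat s_{\widehat m}\leq\widehat s_m\,e^{pen(m)}$ since $pen(\widehat m)\geq 0$. As $A\geq 0$, this entails $-A/\widehat s_{\widehat m}\leq -A\,e^{-pen(m)}/\widehat s_m$, so that on this event
$$R_4(m,\widehat m)=\frac{A}{\widehat s_m}-\frac{A}{\widehat s_{\widehat m}}\leq\frac{A}{\widehat s_m}\bigl(1-e^{-pen(m)}\bigr)\leq\frac{A}{\widehat s_m}\,pen(m),$$
using $1-e^{-x}\leq x$. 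Taking expectations over the restricted event yields $\mathbb E[R_4\,\mathbf 1_{A\geq 0}\mathbf 1_{\mathbb B_1}]\leq pen(m)\,\mathbb E[A_+/\widehat s_m]$, so it suffices to bound $\mathbb E[A_+/\widehat s_m]$ by a universal constant. Recalling from Lemma \ref{lemme_basic} that $\widehat s_m=\|\Pi_m^\perp(\boldsymbol\epsilon+\boldsymbol\epsilon_m)\|_n^2$, so that $n\widehat s_m/s_{i,m}$ is a $\chi^2(n-|m|)$ variable with $s_{i,m}\geq s$, and that $A_+\leq\|\boldsymbol\epsilon\|_n^2+s$, a Cauchy--Schwarz step combined with the moments of the $\chi^2$ and inverse-$\chi^2$ laws (finite because $|m|\leq d<n-4$) gives $\mathbb E[A_+/\widehat s_m]\leq L$. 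This produces the announced $L\,pen(m)$ term.

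On the exceptional event $\{A<0\}$, i.e. $\|\boldsymbol\epsilon\|_n^2<s(1-\kappa_0)$, I bound $R_4=|A|\,[1/\widehat s_{\widehat m}-1/\widehat s_m]\leq|A|/\widehat s_{\widehat m}$. The key observation is that $n\|\boldsymbol\epsilon\|_n^2/s$ follows a $\chi^2(n)$ distribution, whence $\mathbb P[A<0]=\mathbb P[\chi^2(n)<n(1-\kappa_0)]$ decays exponentially in $n$ by a lower-tail large-deviation bound (here $\kappa_0>0$ is essential). A Cauchy--Schwarz step then gives $\mathbb E[R_4\,\mathbf 1_{A<0}\mathbf 1_{\mathbb B_1}]\leq\sqrt{\mathbb E[A^2\mathbf 1_{A<0}]}\,\sqrt{\mathbb E[\widehat s_{\widehat m}^{-2}\mathbf 1_{\mathbb B_1}]}$, and combining the exponentially small probability of $\{A<0\}$ with polynomial-in-$n$ moment bounds for $|A|$ and for $1/\widehat s_{\widehat m}$ yields a contribution of order $n\exp[-nL_K]$. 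Adding the two contributions proves the claim.

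The main obstacle is this exceptional term: although $\mathbb P[A<0]$ is exponentially small, the quantity $1/\widehat s_{\widehat m}$ involves the data-driven model $\widehat m$ and may be large whenever some candidate submodel nearly interpolates ${\bf X}_i$, so its moments must be controlled uniformly over the collection $\mathcal M$. This is precisely where the dimension restriction $|m'|\leq d$ — keeping each ratio $\widehat s_{m'}/s_{i,m'}$ a $\chi^2$ with $n-|m'|$ degrees of freedom bounded away from zero — and the event $\mathbb B_1$ of Lemma \ref{concentration_R1}, on which $\widehat s_{\widehat m}$ stays above a fixed fraction of $s_{i,\widehat m}$, are used to guarantee that these moments grow at most polynomially, so that their product with $\mathbb P[A<0]$ remains of the order $n\exp[-nL_K]$.
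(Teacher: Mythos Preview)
Your proposal is correct and follows essentially the same route as the paper's proof (which is deferred to the technical appendix and explicitly modelled on Lemma~3 of Baraud, Giraud and Huet~\cite{baraud08}): split on the sign of $A=\|\boldsymbol\epsilon\|_n^2-s(1-\kappa_0)$, exploit the optimality inequality $\log\widehat s_{\widehat m}+pen(\widehat m)\leq\log\widehat s_m+pen(m)$ together with $1-e^{-x}\leq x$ to extract the $L\,pen(m)$ contribution on $\{A\geq0\}$, and handle $\{A<0\}$ via the exponential lower tail of $\chi^2(n)$ combined with a uniform control of $s/\widehat s_{\widehat m}$ on $\mathbb B_1$. Your identification of the last point as the main obstacle, and of $\mathbb B_1$ as the device that keeps $\widehat s_{\widehat m}/s_{i,\widehat m}$ bounded away from zero uniformly over the collection, is exactly how the argument closes.
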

The proofs of this lemma relies on the same ideas as the proofs of Lemma 3 in
\cite{baraud08}. See~\cite{technical} Sect.1.5 for a detailed proof.\\

Gathering these two lemmas, we control the Kullback risk of
$(\widetilde{t},\widetilde{s})$ on the event $\mathbb{B}_1$
\begin{eqnarray}
 2\mathbb{E}\left[\mathcal{K}\left(t,s;\widetilde{t},\widetilde{s}\right)\mathbf
{1}_{\mathbb{B}_1}\right] & \leq &L_{K,\eta}\left\{
2\mathbb{E}\left[\mathcal{K}\left(t,s;\widehat{t}_m,\widehat{s}_m\right)\right]
+  pen(m)\right\}
\nonumber\\ & + &\frac{L_K}{n} + (n+L)\exp\left[-nL_K\right]\ .
\label{controle_kullback1}
\end{eqnarray}
To conclude, we need to control the Kullback risk of the estimator
$(\widetilde{t},\widetilde{s})$ on the event $\mathbb{B}_1^c$.
\begin{lemma}\label{controle_risque_kullback_A}
Outside the event $\mathbb{B}_1$, the Kullback risk is upper bounded as follows:
$$\mathbb{E}\left[\mathcal{K}\left(t,s;\widetilde{t},\widetilde{s}
\right)\mathbf{1}_{\mathbb{B}_1^c}\right] \leq
L_{K,\eta}n^{5/2}\left[1+\mathcal{K}(t,s;0,1)\right]\exp\left[-nL_K\right]\ .$$
\end{lemma}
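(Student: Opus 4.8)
The plan is to exploit that $\mathbb{B}_1^c$ is exponentially unlikely, so that a merely polynomial (in $n$) control of the moments of the loss suffices. Concretely, I would apply the Cauchy--Schwarz inequality,
\begin{eqnarray*}
\mathbb{E}\left[\mathcal{K}\left(t,s;\widetilde{t},\widetilde{s}\right)\mathbf{1}_{\mathbb{B}_1^c}\right]\leq \sqrt{\mathbb{E}\left[\mathcal{K}\left(t,s;\widetilde{t},\widetilde{s}\right)^2\right]}\ \sqrt{\mathbb{P}\left(\mathbb{B}_1^c\right)}\ ,
\end{eqnarray*}
and then invoke Lemma~\ref{concentration_R1}, which gives $\mathbb{P}(\mathbb{B}_1^c)\leq L_Kn\exp[-nL'(K,\eta)]$. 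The whole task thus reduces to proving a second-moment bound of the form $\mathbb{E}[\mathcal{K}(t,s;\widetilde{t},\widetilde{s})^2]\leq L_{K,\eta}\,n^{a}\exp(c\eta n)\,[1+\mathcal{K}(t,s;0,1)]^2$ for fixed $a$ and $c$, after which the exponential factor $\exp[-nL'(K,\eta)]$ absorbs the polynomial prefactors and the announced $n^{5/2}\exp[-nL_K]$ comes out of the exponent bookkeeping.

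For the second moment I would start from the explicit formula of Lemma~\ref{lemme_basic} and split
\begin{eqnarray*}
2\mathcal{K}\left(t,s;\widetilde{t},\widetilde{s}\right)=\left[\log\frac{\widetilde{s}}{s}+\frac{s}{\widetilde{s}}-1\right]+\frac{l(t,\widetilde{t})}{\widetilde{s}}\ ,
\end{eqnarray*}
controlling the three contributions separately. The ratio $l(t,\widetilde{t})/\widetilde{s}$ is \emph{scale free}: writing $l(\widehat{t}_m,t)+s=l(\widehat{t}_m,t_m)+s_m$ and $\widehat{s}_m=\|\Pi^\perp_m(\boldsymbol{\epsilon}+\boldsymbol{\epsilon}_m)\|_n^2$, the quantity $[l(t,\widehat{t}_m)+s]/\widehat{s}_m$ is a ratio of quadratic forms in standard Gaussian vectors whose law is independent of $(t,s)$, so its moments are bounded by a power of $n$ through the inverse-Wishart and inverse-$\chi^2$ computations already carried out in Proposition~\ref{prte_basic_kullback_mle}. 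The term $s/\widetilde{s}$ is handled through the negative moments of $\widetilde{s}$ discussed below. Finally, for $\log(\widetilde{s}/s)$ only the upper tail matters: comparing the penalized criterion at $\widehat{m}$ with the null model (admissible with vanishing penalty) yields $\widetilde{s}\leq\widehat{s}_\emptyset=\|{\bf X}_i\|_n^2$, hence $\log(\widetilde{s}/s)\leq\log(\|{\bf X}_i\|_n^2/s)$, whose square integrates against a bound controlled by $1+\mathcal{K}(t,s;0,1)$, because $\mathbb{E}\|{\bf X}_i\|_n^2=\var(X_i)=s+l(t,0)$ and $\mathcal{K}(t,s;0,1)$ dominates both $|\log s|$ and $l(t,0)$. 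This is precisely where the factor $[1+\mathcal{K}(t,s;0,1)]$ enters.

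The delicate point, and what I expect to be the main obstacle, is the negative moments of the \emph{data-driven} $\widetilde{s}=\widehat{s}_{\widehat{m}}$. Since $\widehat{s}_{\widehat{m}}$ is one of the $\widehat{s}_m$ we have $\widetilde{s}\geq\min_{m\in\mathcal{M}_i}\widehat{s}_m$, and $\mathbb{E}[\widehat{s}_m^{-2k}]\leq L\,s_m^{-2k}[n/(n-|m|)]^{2k}$ follows from the inverse-$\chi^2$ moments (finite since $n-|m|>4k$ under $(\mathbb{H}_{K,\eta})$ and $n\geq n_0(K)$). A crude union bound over $\mathcal{M}_i$ then gives $\mathbb{E}[(\min_m\widehat{s}_m)^{-2k}]\leq|\mathcal{M}_i|\,L\,s^{-2k}$, and the entropy constraint $\frac{|m_i|}{n-|m_i|}(1+\sqrt{2H_i(|m_i|)})^2\leq\eta$ bounds $|\mathcal{M}_i|$ by $\exp(c\eta n)$. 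One must therefore guarantee that this exponential blow-up of the worst-case moments is beaten by the exponential smallness of $\mathbb{P}(\mathbb{B}_1^c)$, i.e. that the rate $L'(K,\eta)$ of Lemma~\ref{concentration_R1} exceeds the entropy rate $\eta$; this is exactly the regime $\eta<\eta(K)$, $n\geq n_0(K)$ in which $\mathbb{B}_1$ is constructed. Collecting the three contributions gives $\mathbb{E}[\mathcal{K}^2]\leq L_{K,\eta}n^{4}\exp(c\eta n)[1+\mathcal{K}(t,s;0,1)]^2$, and Cauchy--Schwarz against $\mathbb{P}(\mathbb{B}_1^c)^{1/2}$ leaves a residual $\exp[-nL_K]$ with $L_K=(L'-c\eta)/2>0$, which yields the stated bound.
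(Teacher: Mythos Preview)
Your approach matches the paper's: H\"older's inequality (you use the Cauchy--Schwarz case) combined with polynomial moment bounds on the parametric losses $\mathcal{K}(t,s;\widehat{t}_m,\widehat{s}_m)$, controlled uniformly over $m\in\mathcal{M}_i$ via a union bound. The one step you flag but do not fully justify---that the decay rate $L'(K,\eta)$ of $\mathbb{P}(\mathbb{B}_1^c)$ beats the entropy rate $c\eta$ coming from the union over $\mathcal{M}_i$---is indeed the crux, and is handled in the paper's technical appendix by inspecting the explicit constants in the construction of $\mathbb{B}_1$.
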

This lemma is based on H\"older's inequality and on an upper bound of the
moments of the parametric losses $\mathcal{K}(t,s;\widehat{t}_m,\widehat{s}_m)$.
A detailed proof is in the technical Appendix~\cite{technical} Sect.1.6.
Combining (\ref{controle_kullback1}) and Lemma \ref{controle_risque_kullback_A}
allows to conclude
\begin{eqnarray*}
 \mathbb{E}\left[\mathcal{K}\left(t,s;\widetilde{t},\widetilde{s} \right)\right]
& \leq &
L_{K,\eta}\left[\mathbb{E}\left[\mathcal{K}\left(t,s;\widehat{t}_m,\widehat{s}
_m\right)\right]+pen(m)\right] + \frac{L_K}{n}\\ &+& L_{K,\eta}n^{5/2}\left[1+
\mathcal{K}(t,s;0,1)\right]\exp\left[-nL_K\right]\ .
\end{eqnarray*}

\end{proof}

\subsubsection{Proof of the corollaries}

\begin{proof}[Proof of Corollary \ref{cor_oracle_ordonne}]
 The functions $H_i(.)$ equal $0$ for all the collections
$\mathcal{M}^d_{i,\text{ord}}$. Hence, the collections
$\mathcal{M}^d_{\text{ord}}$ satisfies $(\boldsymbol{\mathbb{H}}_{K,\eta})$. We
conclude by gathering Proposition \ref{prte_basic_kullback_mle} and Theorem
\ref{main_thrm}.
\end{proof}

\begin{proof}[Proof of Corollary \ref{cor_oracle_complet}]
First, we claim that for any $K>1$ the penalties (\ref{penalite_complete_log})
are lower bounded by penalties defined in (\ref{penalite_complexe}) with some
$K'>1$ if 
$$|m_i|/(n-|m_i|)\left\{1+\sqrt{2\left[1+\log\left((i-1)/|m_i|\right)\right]^2}
\right\}\leq \nu'(K)\ .$$
If we assume that $d[1+\log(p/d)\vee 0]\leq n\eta'(K)$, for some well chosen
function $\eta'(K)$, then $(\mathbb{H}_{K',\eta})$ is fulfilled and that the
risk bound (\ref{oracle_complet}) holds. A detailed proof is in the technical
Appendix~cite{technical} Sect.1.7.

\end{proof}

\begin{proof}[Proof of Proposition \ref{proposition_risque_fast}]
Under the event $\mathbb{A}_m$, the  model $m$ belongs to the collection
$\widehat{\mathcal{M}}_1\times \ldots\times \widehat{\mathcal{M}}_p$. Hence for
any $i$ in $1,\ldots p$,
$\log(\widehat{s}_{i,\widehat{m}^f_i})+pen(\widehat{m}^f_i)\leq
\log(\widehat{s}_{i,m_i})+pen(m_i)$. The rest of the proof is analogous to the
proof of Theorem \ref{main_thrm}.
\end{proof}

\subsection{Proofs of the minimax bounds}

The minimax bounds are based on Fano's method~\cite{yu}. Since the Kullback
discrepansy is not a distance, we cannot directly apply this method. Instead, we
use a modified version of Birg\'e's lemma~\cite{birgelemma} for covariance
estimation. In the sequel, 
we note $\|t\|_{l_2}$ the Euclidean norm of a vector $t$.

\begin{lemma}\label{lemme_principal_minoration_minimax}
Let  $A$  be a subset  of $\{1,\ldots ,p\}$. For any positive matrices $\Omega$
and $\Omega'$, we define the function $d(\Omega,\Omega')$ by	
\begin{eqnarray}
d(\Omega,\Omega'):=\sum_{i\in
A}\log\left[1+\frac{\|t_{i}-t'_i\|^2_{l_2}}{4}\right]+\sum_{i\in
A^c}\frac{s_i}{s'_i}+\log\left(\frac{s_i}{s'_i}\right)-1\
.\label{definition_distance_dn} 
\end{eqnarray}
Let $\Upsilon$ be a subset of square matrices of size $p$ which satisfies the
following assumptions:
\begin{enumerate}
 \item For all $\Omega\in \Upsilon,\, \varphi_{\text{max}}(\Omega)\leq 2\text{
and } \varphi_{\text{min}}(\Omega)\geq 1/2$.
 \item There exists $(\mathbf{s}_1,\mathbf{s}_2)\in[1;2]^2$  such that
$\forall\Omega\in\Upsilon,\ \forall 1\leq i\leq p$, $s_i\in
\{\mathbf{s}_1,\mathbf{s}_2\}$. 
\end{enumerate}
Setting  $\delta=\min_{\Omega,\Omega'\in \Upsilon,\Omega\neq
\Omega'}d(\Omega,\Omega')$, provided that 
$\max_{\Omega,\Omega'\in \Upsilon}\mathcal{K}(\mathbb{P}^{\otimes
n}_{\Omega};\mathbb{P}^{\otimes n}_{\Omega'})\leq \kappa_1 \log|\Upsilon|$, the
following lower bound holds 
\begin{eqnarray*}
\inf_{\widehat{\Omega}}\sup_{\Omega\in
\Upsilon}\mathbb{E}_{\Omega}\left[\mathcal{K}\left(\Omega;\widehat{\Omega}
\right)\right]\geq \kappa_2\delta\ .
\end{eqnarray*}
The numerical constants $\kappa_1$ and $\kappa_2$ are made explicit in the
proof.
\end{lemma}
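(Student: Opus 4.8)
The plan is to reduce the estimation problem to a testing problem among the elements of $\Upsilon$ and then to invoke Birgé's version of Fano's lemma~\cite{birgelemma}. Since the Kullback divergence $\mathcal{K}$ is neither symmetric nor subject to a triangle inequality, the surrogate $d(\Omega,\Omega')$ will play the role of a separating pseudo-distance. The backbone of the argument is the following \emph{pseudo-triangle inequality}, which I would establish first: there exists a universal constant $c>0$ such that, for every positive matrix $\widehat{\Omega}$ and every pair $\Omega,\Omega'\in\Upsilon$,
\begin{equation}\label{pseudo_triangle}
\mathcal{K}\left(\Omega;\widehat{\Omega}\right)+\mathcal{K}\left(\Omega';\widehat{\Omega}\right)\geq c\, d(\Omega,\Omega')\ .
\end{equation}
Granting (\ref{pseudo_triangle}), I define the test $\widehat{T}:=\arg\min_{\Omega'\in\Upsilon}\mathcal{K}(\Omega';\widehat{\Omega})$. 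If the data are generated under $\Omega$ and $\widehat{T}=\Omega''\neq\Omega$, then $\mathcal{K}(\Omega'';\widehat{\Omega})\leq\mathcal{K}(\Omega;\widehat{\Omega})$ by definition of the argmin, so (\ref{pseudo_triangle}) gives $\mathcal{K}(\Omega;\widehat{\Omega})\geq \frac{c}{2}d(\Omega,\Omega'')\geq \frac{c}{2}\delta$. Consequently $\mathbb{E}_{\Omega}[\mathcal{K}(\Omega;\widehat{\Omega})]\geq \frac{c\delta}{2}\,\mathbb{P}_{\Omega}(\widehat{T}\neq\Omega)$, and taking the supremum over $\Omega\in\Upsilon$ reduces the lower bound to controlling the maximal testing error $\max_{\Omega\in\Upsilon}\mathbb{P}_{\Omega}(\widehat{T}\neq\Omega)$.

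To prove (\ref{pseudo_triangle}) I would argue coordinatewise, using the chain rule $\mathcal{K}(\Omega;\widehat{\Omega})=\sum_i\mathcal{K}(t_i,s_i;\widehat{t}_i,\widehat{s}_i)$ and the explicit expression of Lemma \ref{lemme_basic}. Assumption~1 forces the eigenvalues of every principal submatrix $\Sigma_{<i}$ into $[1/2,2]$ by interlacing, so that $\frac{1}{2}\|t_i-\widehat{t}_i\|^2_{l_2}\leq l_i(t_i,\widehat{t}_i)\leq 2\|t_i-\widehat{t}_i\|^2_{l_2}$ for both $\Omega$ and $\Omega'$; the elementary inequality $\|t_i-t'_i\|^2_{l_2}\leq 2\|t_i-\widehat{t}_i\|^2_{l_2}+2\|t'_i-\widehat{t}_i\|^2_{l_2}$ then controls the $l_2$ radius between $\Omega$ and $\Omega'$ by the prediction errors to $\widehat{\Omega}$. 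For an index $i\in A$, the two conditional divergences contribute the nonnegative scale terms together with $\frac{1}{2\widehat{s}_i}[l_i(t_i,\widehat{t}_i)+l_i(t'_i,\widehat{t}_i)]$; since $\widehat{s}_i$ is an arbitrary positive number, I would resolve the trade-off between this ratio and the logarithmic scale terms by a one-variable minimization, showing that their sum dominates $c\log(1+\|t_i-t'_i\|^2_{l_2}/4)$ uniformly in $\widehat{s}_i$ (the constant $4$ and the choice of $\log(1+\cdot)$ are exactly what make this calculus work). For $i\in A^c$, Assumption~2 restricts $s_i,s'_i$ to two values in $[1,2]$, so the target term either vanishes (when $s_i=s'_i$) or equals a fixed positive constant, and the sum of the scalar Gaussian scale divergences of $\widehat{s}_i$ to $s_i$ and to $s'_i$ is bounded below by a multiple of that constant. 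Summing over $i$ yields (\ref{pseudo_triangle}).

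Finally, I would invoke Birgé's lemma \cite{birgelemma} with the disjoint events $\{\widehat{T}=\Omega\}_{\Omega\in\Upsilon}$. The hypothesis $\max_{\Omega,\Omega'}\mathcal{K}(\mathbb{P}^{\otimes n}_{\Omega};\mathbb{P}^{\otimes n}_{\Omega'})\leq\kappa_1\log|\Upsilon|$ controls in particular the average Kullback divergence to any reference measure, so for $\kappa_1$ below the threshold of the lemma one obtains $\max_{\Omega\in\Upsilon}\mathbb{P}_{\Omega}(\widehat{T}\neq\Omega)\geq c'$ for a universal $c'>0$. Combining this with the reduction gives $\sup_{\Omega\in\Upsilon}\mathbb{E}_{\Omega}[\mathcal{K}(\Omega;\widehat{\Omega})]\geq \frac{c\,c'}{2}\delta$, which is the claim with $\kappa_2=cc'/2$ and $\kappa_1$ chosen as the admissible constant of Birgé's lemma. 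The main obstacle is the uniform-in-$\widehat{\Omega}$ inequality (\ref{pseudo_triangle}): because $\widehat{\Omega}$ is an arbitrary estimator with uncontrolled eigenvalues, the prediction-error ratios $l_i/\widehat{s}_i$ may degenerate as $\widehat{s}_i\to\infty$, and the delicate point is to show that the logarithmic scale terms compensate precisely, which is where the specific shape of $d$ (the $\log(1+\cdot)$ on $A$ and the Gaussian scale discrepancy on $A^c$) has been tailored to match the Kullback loss.
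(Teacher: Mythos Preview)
Your proposal is correct and follows essentially the same route as the paper: establish a coordinatewise pseudo-triangle inequality of the form $\mathcal{K}(\Omega;\widehat{\Omega})+\mathcal{K}(\Omega';\widehat{\Omega})\geq c\,d(\Omega,\Omega')$, reduce to a multiple-hypothesis test via the projection $\widehat{T}=\arg\min_{\Omega'\in\Upsilon}\mathcal{K}(\Omega';\widehat{\Omega})$, and conclude by Birg\'e's lemma. Your identification of the one-variable minimization in $\widehat{s}_i$ as the mechanism producing the $\log(1+\cdot)$ shape on $A$ is exactly the point.

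One caveat worth flagging. As the $A^c$ summand is typeset, $s_i/s'_i+\log(s_i/s'_i)-1$ is \emph{first order} in $r-1$ (with $r=s_i/s'_i$) and can be negative, whereas your lower bound $\min_{\widehat{s}_i}\{\cdots\}=\tfrac12\log\!\bigl[(s_i+s'_i)^2/(4s_is'_i)\bigr]$ is \emph{quadratic} in $r-1$. Hence inequality~(\ref{pseudo_triangle}) cannot hold with a universal $c$ for that expression. The intended term is the scalar Gaussian Kullback discrepancy $s_i/s'_i-\log(s_i/s'_i)-1$, which is nonnegative and $\sim\tfrac12(r-1)^2$ near $r=1$; with that correction your argument goes through verbatim and yields a universal $c$ (and hence a universal $\kappa_2$), in line with the paper's claim that $\kappa_1,\kappa_2$ are fixed numerical constants. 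If you keep the formula as written, you should note that $c$ (and thus $\kappa_2$) would depend on the pair $(\mathbf{s}_1,\mathbf{s}_2)$ through Assumption~2.
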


The general setup of the proofs is to pick a maximal subset $\Upsilon$ of
matrices that are well separated with respect to $d(.,.)$ and such that their
Kullback discrepansy is not too large. The existence of these subsets is ensured
by technical combinatorial arguments. We
postpone the complete proofs to the technical appendix~\cite{technical} Sect.2.

\subsection{Proof of the Frobenius bounds}\label{section_preuve_frob}

We derive the Frobenius rates of convergence from the Kullback bounds. Indeed,
we prove in~\cite{technical} that
\begin{eqnarray}\label{majoration_risque_fr}
\|\sqrt{\Sigma}\Omega'\sqrt{\Sigma}-I_{p_n}\|_F^2=
4\left[\mathcal{K}\left(\Omega;\Omega'\right)\right]+o\left[\mathcal{K}
\left(\Omega;\Omega'\right)\right]\ ,
\end{eqnarray}
when $\mathcal{K}\left(\Omega;\Omega'\right)$ is close to $0$. Hence, one may
upper bound the Frobenius distance between $\Omega'$ and $\Omega$ in terms of
Kullback discrepancy using that 
\begin{eqnarray*}
 \|\Omega' -\Omega\|_F^2 & = &
tr\left[\sqrt{\Omega}\left(\sqrt{\Sigma}\Omega'\sqrt{\Sigma}-I_{p_n}
\right)\Omega\left(\sqrt{\Sigma}\Omega'\sqrt{\Sigma}-I_{p_n}\right)\sqrt{\Omega}
\right]\\
& \leq &
\varphi_{\text{\text{max}}}^2\left(\Omega\right)\|\sqrt{\Sigma}\Omega'\sqrt{
\Sigma}-I_{p_n}\|_F^2 \ .
\end{eqnarray*}
The complete proof of Corollaries \ref{minimax_ordonnee_Frobenius} and
\ref{risque_Frobenius_complete} are postponed to the technical
Appendix~\cite{technical} Sect.4.

 \section*{Acknowledgements}
I thank Elizaveta Levina and Adam Rothman for their help with the code of the
nested Lasso. 
%\addcontentsline{toc}{section}{References}

\bibliographystyle{acmtrans-ims}

\bibliography{estimation}

\end{document}